\newcommand{\ck}{{\bar{\bk}}}
\newcommand{\f}{\textbf}
\newtheorem{numbered}{}[section]
\newtheorem{Theorem}[numbered]{Theorem}
\newtheorem{nothing*}[numbered]{}
\newtheorem{remark}[numbered]{Remark}
\newtheorem{Definition}[numbered]{Definition}
\newtheorem{Lemma}[numbered]{Lemma}
\newtheorem{Proposition}[numbered]{Proposition}
\newtheorem{Corollary}[numbered]{Corollary}
\newtheorem{example}[numbered]{Example}
\def\bc{{\mathbb C}}
\def\bm{{\mathbb M}}
\def\bn{{\mathbb N}}
\def\bp{{\mathbb P}}
\def\br{{\mathbb R}}
\def\l{\lambda}
\def\i{\varepsilon}
\def\t{\tau}
\def\f{\varphi}
\def\N{\mathbb{N}}
\def\a{\alpha}
\def\b{\beta}
\def\m{\mu}
\def\n{\nu}
\def\d{\delta}
\def\g{\gamma}
\def\ca{\mathcal{ A}}
\def\ck{\mathcal{K}}
\def\cq{\mathcal{ Q}}
\def\ga{{\frak A}}
\def\id{{\bf 1}\!\!{\rm I}}
\begin{document}

\begin{center}
{\Large {\bf Generalized Dobrushin Ergodicity Coefficient and Uniform Ergodicities of Markov Operators}}\\[1cm]

{\sc Farrukh Mukhamedov} \\[2mm]

 Department of Mathematical Sciences, College of Science, \\
 United Arab Emirates University 15551, Al-Ain,\\ United
Arab Emirates.\\

e-mail: {\tt far75m@yandex.ru; farrukh.m@uaeu.ac.ae}\\[1cm]

{\sc  Ahmed Al-Rawashdeh} \\[2mm]

Department of Mathematical Sciences, College of Science, \\
United Arab Emirates University 15551, Al-Ain, \\ United
Arab Emirates.\\
 e-mail: {{\tt aalrawashdeh@uaeu.ac.ae}}\\[1cm]
\end{center}

\small
\begin{center}
{\bf Abstract}\\
\end{center}

In this paper the stability and the perturbation bounds of Markov operators acting on abstract state spaces are investigated.  Here, an abstract state space is an ordered Banach space where the norm has an additivity property on the cone of positive elements. We basically study uniform ergodic properties of Markov operators by means of so-called a generalized Dobrushin's ergodicity coefficient. This allows us to get several convergence results with rates. Some results on quasi-compactness of Markov operators are proved in terms of the ergodicity coefficient. Furthermore, a characterization of uniformly $P$-ergodic Markov operators is given which enable us to construct plenty examples of such types of operators. The uniform mean ergodicity of Markov operators is
established in terms of the Dobrushin ergodicity coefficient. The obtained results are even new in the classical and quantum settings.\\[2mm]
\textit{MSC}: 47A35; 60J10, 28D05\\
\textit{Key words}:  uniform $P$-ergodic; Markov operator; projection; ergodicity coefficient; uniform mean ergodic; perturbation bound.

\normalsize

\section{Introduction}

It is known that Doeblin and Dobrushin \cite{D,IS} characterized the contraction rate of
Markov operators which act
on a space of measures equipped with the total variation norm as follows: Let us consider a finite Markov chain with a transition (row stochastic) matrix
$\bp=(p_{ij})\in \br^{n\times n}$. It defines a Markov operator  $P:\br^n\to \br^n$ such that $P\mathbf{x}=\mathbf{x}\bp$, where the elements of  $\br^n$ are row vectors.
The set of probability measures can be identified with the standard simplex $\ck=\{(x_i)\in\br^n: \ x_i\geq 0, \ \sum_{i=1}^n x_i=1\}$. The total variation norm is
nothing but one half of the $\ell_1$ norm $\|\cdot\|_1$ on $\br^n$. One can introduce the following coefficient
$$
\d(P)=\sup_{\m,\n\in\ck, \m\neq \n}\frac{\|P\m-P\n\|_1}{\|\m-\n\|_1}.
$$

This coefficient is characterized by Doeblin and Dobrushin \cite{D} as follows:
\begin{eqnarray}\label{DD1}
\d(P)&=&\frac{1}{2}\max_{i<j}\sum_{k=1}^n|p_{ik}-p_{jk}|\\
&=&1-\min_{i<j}\sum_{k=1}^n\min\{p_{ik},p_{jk}\}.
\end{eqnarray}

It is known that if $\d(P)<1$ (this condition is often called \textit{Dobrushin condition}) then $P^n$ converges to its invariant distribution with exponential rate \cite{D,Se2}.  Moreover, this condition also gives
the spectral gap of the operator $P$ (see \cite{Se2}).
The Dobrushin condition played a major role as a source of inspiration for many mathematicians to do interesting work on the theory of Markov processes (see for example \cite{IS,MT,Se2}).

Let us consider the following example:  Let $T: \br^3 \to \br^3$ be the Markov operator which is given by the matrix
\[\begin{pmatrix}
1 &  0  & 0 \\
0 & 1  & 0 \\
 0 &\frac{1}{2} & \frac{1}{2}
\end{pmatrix}
.\]
It is clear that $T^n$ converges to $P$, where
\[ P = \begin{pmatrix}
1 &  0  & 0 \\
0 & 1  & 0 \\
 0 & 1& 0
\end{pmatrix}
.\]
One can calculate that $\d(T)=1$. From this, we infer that $T^n$ converges, but $\d(T)=1$.  Hence, the investigation of the sequence $\{T^n\}$ in terms of $\d(T)$ is not effective.
Hartfiel et al. \cite{H,HR} introduced a generalized coefficient which covers the mentioned type of convergence in the finite-dimensional setting.  To the best knowledge of the authors, such coefficient is not
studied even in the classical $L^1$-spaces. Therefore, the main aim of this paper is to define an  analogue of the coefficient mentioned above in a more general setting, i.e. for ordered Banach spaces, such that it will cover all known classical spaces as particular cases.
Moreover, we are going to investigate uniform asymptotic stabilities of Markov operators on ordered Banach spaces. We notice that the consideration of these types of Banach spaces is convenient and important for the study of several properties of physical and
probabilistic processes in an abstract framework which covers the classical and quantum cases (see \cite{Alf,E}). In this
setting, certain limiting behaviors of Markov operators were
investigated in \cite{ALM, B,EW2,GQ,SZ}.

Our purpose is to investigate stability and perturbation bounds of
Markov operators acting on abstract state spaces. More precisely, an abstract state space is an ordered Banach space where the norm has an additivity property on the cone of positive elements. Examples of these spaces include all classical $L^1$-spaces and the space of density operators acting on some Hilbert spaces \cite{Alf,Jam}. Moreover, any Banach space can be embedded into some abstract spaces (see Example \ref{E1} (c)).
There are a few results in the literature on uniform convergence of iterates of bounded linear operators on Banach spaces (see, e.g. \cite{E,Jach,L,LSS,MZ,R,TZ}).
In the present paper, we study the asymptotic stability (in the sense of uniform topology) of Markov operators based on the so-called generalized Dobrushin's ergodicity coefficient. This allows us
to get several convergence results with rates.
  We notice that the Dobrushin coefficient (which extends $\d(P)$ to  abstract state spaces) has been introduced and studied in \cite{GQ,M0,M01}, for Markov
operators  acting on abstract state spaces.

The paper is organized as follows. In Section 2, we provide preliminary definitions and results on properties of abstract state spaces. In Section 3, we define a generalized Dobrushin ergodicity
coefficient $\d_P(T)$ of Markov operators with respect to a projection $P$ and study its properties. Some results on quasi-compactness of Markov operators are proved in terms of this coefficient.
At the end of that section, we give some connection of $\d_P(T)$ to the spectral gap of $T$. Furthermore, in Section 4,  the uniform $P$-ergodicity of Markov operators is studied in terms of the generalized Dobrushin ergodicity
coefficient. This allows us to establish certain category results for the set of uniformly $P$-ergodic Markov operators.  An application of the main result of this section is to get results on uniform ergodicities of linear bounded operators on Banach spaces.
In Section 5, we give a characterization of uniformly $P$-ergodic Markov operators which enables us to explicitly construct  such operators. Finally, in Section 6,
we establish perturbation bounds
for  the uniform $P$-ergodic Markov operators. It is noticed that perturbation bounds have important applications in the theory of probability and quantum information (see, \cite{EM2018,Mit,Mit1,SW}). Moreover, the
results are even new in the classical and quantum
settings.

\section{Preliminaries}

In this section, we recall some necessary definitions and results
about abstract state spaces.

 Let $X$ be an ordered vector space
with a cone $X_+=\{x\in X: \ x\geq 0\}$. A subset $\ck$ is called
a {\it base} for $X$, if  $\ck=\{x\in X_+:\ f(x)=1\}$ for
some strictly positive (i.e. $f(x)>0$ for $x>0$) linear functional
$f$ on $X$. An ordered vector space $X$ with generating cone $X_+$
(i.e. $X=X_+-X_+$) and a fixed base $\ck$, defined by a functional
$f$, is called {\it an ordered vector space with a base}
\cite{Alf}.  Let
$U$ be the convex hull of the set $\ck\cup(-\ck)$, and let
$$
\|x\|_{\ck}=\inf\{\l\in\br_+:\ x\in\l U\}.
$$
Then one can see that $\|\cdot\|_{\ck}$ is a seminorm on $X$.
Moreover, one has $\ck=\{x\in X_+: \ \|x\|_{\ck}=1\}$,
$f(x)=\|x\|_{\ck}$ for $x\in X_+$.
Assume that the seminorm becomes a norm, and $X$ is complete space w.r.t. this norm and $X_+$ is closed subset, then
$(X,X_+,\ck,f)$ is called \textit{abstract state
space}. In this case, $\ck$ is a closed face of the unit ball of $X$,  and $U$ contains the open
unit ball of $X$.
If the set $U$ is \textit{radially compact} \cite{Alf}, i.e.
$\ell\cap U$ is a closed and bounded segment for every line $\ell$
through the origin of $X$,  then  $\|\cdot\|_{\ck}$ is a
norm.  The radial compactness is equivalent to the coincidence of $U$ with the closed unit ball of $X$.  In this case,
$X$  is called a \textit{strong abstract state
space}.  In the sequel, for the sake of simplicity, instead of
$\|\cdot\|_{\ck}$, the standard notation $\|\cdot\|$ is used.
To better understand the difference between a strong abstract state space and a more general class of base norm spaces, the reader is referred to
\cite{Yo}.

A positive cone $X_+$  of an ordered Banach space $X$  is said to be $\l$-generating if, given
$x\in X$, we can find $y,z\in X_+$ such that $x = y- z$ and $\|y\| + \|z\|\leq\l \|x\|$.
The norm on $X$
is  called \textit{regular} (respectively, \textit{strongly regular}) if, given $x$ in the open (respectively,
closed) unit ball of $X$,  $y$ can be found in the closed unit ball with $y\geq  x$ and $y\geq -x$.
The
norm is said to be additive on $X_+$ if $\|x + y\| = \|x\| + \|y\|$ for all $ x, y\in X_+$.
If $X_+$  is 1-generating, then  $X$ can be shown to be strongly regular. Similarly, if $X_+$ is
$\l$-generating for all $\l > 1$, then $X$ is regular \cite{Yo}.
The following results are well-known.

\begin{Theorem}\cite[p.90]{WN} Let  $X$ be an ordered Banach space with closed positive cone $X_+$.
Then te following statements are equivalent:
\begin{itemize}
\item[(i)] $X$ is an abstract state space;
\item[(ii)] $X$ is regular, and the norm is additive on $X_+$;
\item[(iii)] $X_+$ is $\l$-generating for all $\l > 1$, and the norm is additive on $X_+$.
\end{itemize}
\end{Theorem}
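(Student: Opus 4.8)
The plan is to establish the cycle (i) $\Rightarrow$ (ii) $\Rightarrow$ (iii) $\Rightarrow$ (i); the guiding idea is that additivity of the norm on $X_+$ is exactly what lets one identify the norm with the base functional and thereby reconstruct the whole abstract state space structure. Throughout I would freely use the facts already recorded in the excerpt: that $f(x)=\|x\|_{\ck}$ for $x\in X_+$, that $U=\mathrm{conv}(\ck\cup(-\ck))$ contains the open unit ball, and that $\lambda$-generation for all $\lambda>1$ forces regularity. For (i) $\Rightarrow$ (ii), additivity is immediate, since for $x,y\in X_+$ linearity of $f$ gives $\|x+y\|=f(x+y)=f(x)+f(y)=\|x\|+\|y\|$. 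For regularity I would take $x$ with $\|x\|<1$, hence $x\in U$, and regroup a convex-hull representation (using convexity of the base $\ck$) as $x=\alpha p-\beta q$ with $p,q\in\ck$, $\alpha,\beta\ge0$, $\alpha+\beta=1$; then $y=\alpha p+\beta q$ satisfies $y-x=2\beta q\ge0$ and $y+x=2\alpha p\ge0$, so $y\ge\pm x$, while $\|y\|=f(y)=\alpha+\beta=1$ puts $y$ in the closed unit ball.

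The implication (ii) $\Rightarrow$ (iii) needs only the generation property, which I would extract from regularity by scaling: for $x\neq0$ and $r>\|x\|$, apply regularity to $x/r$ to get $y$ in the closed unit ball with $y\ge\pm x/r$; then $Y=\tfrac{r}{2}(y+x/r)$ and $Z=\tfrac{r}{2}(y-x/r)$ lie in $X_+$, satisfy $x=Y-Z$, and additivity gives $\|Y\|+\|Z\|=\tfrac{r}{2}\|2y\|=r\|y\|\le r$. Taking $r=\lambda\|x\|$ yields $\lambda$-generation for every $\lambda>1$; in particular $X_+$ is generating. The reverse direction (iii) $\Rightarrow$ (ii) is the quoted fact, so (ii) and (iii) are equivalent.

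The substantive step is (iii) $\Rightarrow$ (i). I would define $f(x)=\|x\|$ on $X_+$; additivity and positive homogeneity of the norm make $f$ additive and positively homogeneous on the cone, and since (iii) renders $X_+$ generating I can extend $f$ linearly by $f(x_+-x_-)=f(x_+)-f(x_-)$, well-definedness following from additivity on $X_+$ (if $x_+-x_-=x_+'-x_-'$ then $x_++x_-'=x_+'+x_-$, and one applies $f$). This $f$ is strictly positive, so $\ck:=\{x\in X_+:f(x)=1\}$ is a base, and it remains to check $\|\cdot\|_{\ck}=\|\cdot\|$. One inequality is clear, since any $\alpha p-\beta q\in U$ has $\|\alpha p-\beta q\|\le\alpha+\beta=1$, so $U$ lies in the closed unit ball and $\|\cdot\|\le\|\cdot\|_{\ck}$. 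For the reverse I invoke regularity (available from (iii)): given $\|x\|<1$, pick $y$ with $\|y\|\le1$ and $y\ge\pm x$, set $u=y+x$ and $v=y-x\in X_+$, so $x=\tfrac12(u-v)$ with $\|u\|+\|v\|=2\|y\|\le2$ by additivity; writing $x=s\,w$ with $s=\tfrac12(\|u\|+\|v\|)\le1$ and $w=\tfrac{\|u\|}{\|u\|+\|v\|}\tfrac{u}{\|u\|}-\tfrac{\|v\|}{\|u\|+\|v\|}\tfrac{v}{\|v\|}\in U$ (using $0\in U$ and convexity of $U$ for the degenerate cases) shows $x\in U$. Hence the open unit ball lies in $U$ and $\|\cdot\|_{\ck}\le\|\cdot\|$; together with completeness of $X$ and closedness of $X_+$, this makes $(X,X_+,\ck,f)$ an abstract state space.

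The main obstacle is precisely this last norm-coincidence, and within it the inclusion of the open unit ball in $U$: it is the only place where regularity is genuinely used, and it is additivity on $X_+$ that converts the regular decomposition $y\ge\pm x$ into the quantitative bound $\|u\|+\|v\|=2\|y\|$ needed to land $x$ inside the convex hull $U$. Everything else — the regrouping of convex combinations, the linear extension of $f$, and the trivial inequality $U\subseteq\{\|\cdot\|\le1\}$ — is routine bookkeeping.
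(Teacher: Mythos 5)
The paper does not prove this statement at all: it is quoted verbatim from Wong--Ng \cite[p.90]{WN} as a known characterization, so there is no in-paper argument to compare yours against. Judged on its own, your cycle (i) $\Rightarrow$ (ii) $\Rightarrow$ (iii) $\Rightarrow$ (i) is correct and follows the standard route for base-norm spaces. The two directions that carry real content both check out: from (i), the identity $f=\|\cdot\|_{\ck}$ on $X_+$ gives additivity at once, and regrouping a point of $U=\mathrm{conv}(\ck\cup(-\ck))$ as $\alpha p-\beta q$ and passing to $y=\alpha p+\beta q$ is exactly the right way to exhibit regularity; conversely, from (iii), defining $f$ as the norm on $X_+$ and extending by $f(x_+-x_-)=f(x_+)-f(x_-)$ (well-defined and linear precisely because of additivity, as you say) reconstructs the base, and the decomposition $x=\tfrac12(u-v)$ with $u=y+x$, $v=y-x$ and $\|u\|+\|v\|=\|2y\|\le 2$ is the key quantitative step that puts the open unit ball inside $U$. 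Two points you leave implicit are harmless but worth a line each: the degenerate cases $\alpha=0$ or $\beta=0$ in the convex-hull regrouping (resolved since $\ck\neq\emptyset$ once $X\neq\{0\}$), and the fact that regularity follows from $\lambda$-generation for all $\lambda>1$ directly (given $\|x\|<1$ take $1<\lambda<1/\|x\|$, write $x=y-z$ with $\|y\|+\|z\|<1$ and set $w=y+z$), so you need not lean on the quoted fact from \cite{Yo} if you prefer a self-contained argument. No gaps.
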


\begin{Theorem}\cite{Yo}\label{Yo} Let  $X$ be an ordered Banach space with closed positive cone $X_+$.
Then the following statements are equivalent:
\begin{itemize}
\item[(i)] $X$ is a strong abstract state space;
\item[(ii)] $X$ is strongly regular, and the norm is additive on $X_+$;
\item[(iii)] $X_+$ is 1-generating and the norm is additive on $X_+$.
\end{itemize}
\end{Theorem}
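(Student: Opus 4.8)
The plan is to prove the cycle (i) $\Rightarrow$ (ii) $\Rightarrow$ (iii) $\Rightarrow$ (i), using the preceding theorem to supply the missing base-norm data. First I would observe that the strong conditions dominate their weak counterparts: strong regularity implies regularity, and $1$-generating implies $\lambda$-generating for every $\lambda>1$. Hence each of (i), (ii), (iii) already forces $X$ to be an (ordinary) abstract state space by the previous theorem, so throughout I may assume the existence of a base $\ck$ and a strictly positive linear functional $f$ with $\|\cdot\|_{\ck}=\|\cdot\|$, with $f(x)=\|x\|$ for $x\in X_+$, and with $U$ squeezed between the open and closed unit balls. The whole problem then reduces to the single geometric upgrade distinguishing a \emph{strong} abstract state space, namely the radial compactness $U=\{x:\|x\|\le 1\}$.

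For (i) $\Rightarrow$ (ii), additivity is immediate from $\|x+y\|=f(x+y)=f(x)+f(y)=\|x\|+\|y\|$ on $X_+$. For strong regularity, given $\|x\|\le 1$ I would use $U=\{x:\|x\|\le1\}$ together with convexity of $\ck$ to write $x=\alpha a-\beta b$ with $a,b\in\ck$ and $\alpha+\beta=1$, then take $w=\alpha a+\beta b\in X_+$; one checks $w-x=2\beta b\ge 0$ and $w+x=2\alpha a\ge 0$, while $\|w\|=f(w)=\alpha+\beta=1$, so $w$ lies in the closed unit ball and dominates $\pm x$. For (ii) $\Rightarrow$ (iii), given $x\ne 0$ I apply strong regularity to $x/\|x\|$ and rescale by $\|x\|$ to obtain $w$ with $\|w\|\le\|x\|$ and $w\ge\pm x$; setting $y=(w+x)/2$ and $z=(w-x)/2$ yields $y,z\in X_+$, $x=y-z$, and by additivity $\|y\|+\|z\|=\|y+z\|=\|w\|\le\|x\|$, which is exactly $1$-generating.

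I expect the main obstacle to sit in the closing implication (iii) $\Rightarrow$ (i), where radial compactness must be recovered. The inclusion $U\subseteq\{x:\|x\|\le 1\}$ holds automatically by the triangle inequality, so the work is the reverse inclusion. Given $\|x\|\le1$ with $x\ne0$, $1$-generating yields $x=y-z$ with $\|y\|+\|z\|\le\|x\|$; combined with $\|x\|\le\|y\|+\|z\|$ this sandwich forces $\|y\|+\|z\|=\|x\|$. Normalizing the nonzero summands to elements $a,b\in\ck$ writes $x=\|y\|a-\|z\|b$ with coefficient sum $\|x\|\le1$. The delicate point is that $U$ is the \emph{literal} convex hull of $\ck\cup(-\ck)$, so I must exhibit $x$ as a genuine finite convex combination whose coefficients sum to $1$; this is achieved by padding with $\tfrac{1-\|x\|}{2}(c-c)$ for a fixed $c\in\ck$, placing $x$ in $U$ and yielding $U=\{x:\|x\|\le1\}$, i.e. (i). The only remaining care is the bookkeeping when $y$ or $z$ vanishes and the check that the base reconstructed here is consistent with the functional $f$ furnished by the earlier theorem.
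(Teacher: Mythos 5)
The paper offers no proof of this statement at all: it is imported verbatim from \cite{Yo} (with the companion Theorem 2.1 quoted from \cite{WN}), so there is no in-paper argument to compare yours against. Judged on its own, your cycle (i)$\Rightarrow$(ii)$\Rightarrow$(iii)$\Rightarrow$(i) is correct. The opening reduction is sound: each of the three hypotheses dominates its weak counterpart, so Theorem 2.1 already furnishes the base $\ck$, the strictly positive functional $f$ with $f=\|\cdot\|$ on $X_+$, and the identification of the given norm with the base norm, leaving exactly the radial-compactness upgrade $U=\{x:\|x\|\leq 1\}$ to be settled. Each step then checks out: collapsing a convex combination from $\ck\cup(-\ck)$ to the two-term form $\alpha a-\beta b$ with $\alpha+\beta=1$ (by convexity of $\ck$), taking $w=\alpha a+\beta b$ for strong regularity, the splitting $y=(w+x)/2$, $z=(w-x)/2$ together with additivity for $1$-generation, and the padding $\tfrac{1-\|x\|}{2}(c-c)$ to exhibit a norm-at-most-one element as a genuine point of the convex hull. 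Two cosmetic remarks: in (iii)$\Rightarrow$(i) the equality $\|y\|+\|z\|=\|x\|$ is not actually needed, since padding by $\tfrac{1}{2}\bigl(1-\|y\|-\|z\|\bigr)(c-c)$ works directly from $\|y\|+\|z\|\leq\|x\|\leq 1$; and the degenerate cases $y=0$ or $z=0$ (and $x=0$) should be written out, though you do flag them. Be aware that the genuinely hard analytic content --- manufacturing the base and the functional from regularity plus additivity --- is outsourced to Theorem 2.1, which the paper likewise states without proof; given how the paper is organized, that is a legitimate division of labour.
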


In this paper, we consider a general abstract state space for which the convex hull of the base $\ck$ and $-\ck$ is not
assumed to be radially compact (in our previous papers \cite{EM2017,M0,M01} this condition was essential). This consideration has an important advantage: whenever $X$ is an ordered Banach space with
a generating cone $X_+$ whose norm is additive on $X_+$, then $X$ admits an equivalent norm that coincides
with the original norm on $X_+$ and renders $X$ that base norm space. Hence, to apply the results of the paper
one would then only have to check that if the norm is additive on $X_+$.

\begin{example}\label{E1} Let us provide some examples of
abstract state spaces.
\begin{itemize}

 \item[(a)]  Let $M$ be a von Neumann algebra. Let $M_{h,*}$ be the
Hermitian part of the predual space $M_*$ of $M$. As a base $\ck$ we
define the set of normal states of $M$. Then
$(M_{h,*},M_{*,+},\ck,\id)$ is a strong abstract state spaces, where $M_{*,+}$ is the set of
all positive functionals taken from $M_*$, and $\id$ is the unit in
$M$. In particular, if $M=L^\infty(E,\m)$, then $M_{*}=L^1(E,\m)$ is an abstract state space.

\item[(b)]
Let $A$ be a real ordered linear space and, as before, let $ A_+$ denote
the set of positive elements of $A$. An element $e\in A_+$ is called
\textit{order unit} if for every $a\in A$ there exists a number
$\l\in\br_+$ such that $-\l e\leq a\leq\l e$. If the order is
Archimedean, then the mapping $a\to\|a\|_e=\inf\{\l> 0\ : \-\l e\leq
a\leq\l e\}$  is a norm. If $A$ is a Banach space with respect to
this norm, the pair $(A, e)$ is called \textit{an order-unit
space with the order unit $e$}.
An element $\rho\in A^*$ is called \textit{positive} if
$\rho(x)\geq 0$ for all $a\in A_+$. By $A^*_+$ we denote the set of
all positive functionals. A positive linear functional is called a
\textit{state} if $\rho(e)=1$. The set of all states is denoted by
$S(A)$. Then it is well-known that $(A^*,A^*_+,S(A),e)$
is a strong abstract state space \cite{Alf}. In particular, if $\ga_{sa}$ is the self-adjoint part of an unital $C^*$-algebra, $\ga_{sa}$ becomes order-unit spaces, hence  $(\ga_{sa}^*,\ga_{sa,+}^*,S(\ga_{sa}),\id)$
is a strong abstract state space.

\item[(c)] Let $X$ be a Banach space over $\br$. Consider a new Banach space
$\mathcal{X}=\br\oplus X$ with a norm $\|(\a,x)\|=\max\{|\a|,\|x\|\}$.
Define a cone $\mathcal{X}_+=\{(\a,x)\ : \ \|x\|\leq \a, \
\a\in\br_+\}$ and a positive functional $f(\a,x)=\a$. Then one can
define a base $\ck=\{(\a,x)\in\mathcal{X}:\ f(\a,x)=1\}$. Clearly, we
have $\ck=\{(1,x):\ \|x\|\leq 1\}$. Then $(\mathcal{X},\mathcal{X}_+,\ck,f)$ is an abstract state space \cite{Jam}. Moreover, $X$
can be isometrically embedded into $\mathcal{X}$.  Using this construction one can study several interesting examples of abstract state spaces.

\item[(d)]  Let $A$ be the disc algebra, i.e. the sup-normed space of complex-valued
functions which are continuous on the closed unit disc, and analytic on the open unit disc.
Let $X =\{f\in A :\ f(1)\in\br \}$. Then $X$ is a real Banach space with the following positive cone
$X_+=\{f\in X: f(1)=\|f\|\}=\{f\in X: \ f(1)\geq\|f\|\}$. The space $X$ is an abstract state space, but not strong one (see  \cite{Yo} for details).
\end{itemize}

\end{example}

Let $(X,X_+,\ck,f)$ be an abstract state space. A linear operator $T:X\to X$ is
called \textit{positive}, if $Tx\geq 0$ whenever $x\geq 0$. A positive linear
operator $T:X\to X$ is said to be {\it Markov}, if $T(\ck)\subset\ck$.
It is clear that $\|T\|=1$, and its adjoint operator $T^*: X^*\to
X^*$ acts an ordered Banach space $X^*$ with unit $f$, and moreover,
$T^*f=f$.
 Now for each
$y\in X$ we define a linear operator $T_y: X\to X$ by
$T_y(x)=f(x)y$.

From the definition of Markov operator, one can prove the following auxiliary fact.

\begin{Lemma}\label{Lem0}
Let $(X,X_+,\ck,f)$ be an abstract state space and let $T$ be a Markov operator on $X$. Then for any $x\in X$, we have $f(Tx)=f(x)$.
\end{Lemma}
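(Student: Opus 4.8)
The plan is to establish the identity first on the base $\ck$, then extend it to the positive cone $X_+$ by homogeneity, and finally to all of $X$ by using that the cone is generating together with the linearity of $T$ and of $f$. For $x\in\ck$ the claim is immediate: since $T$ is Markov we have $Tx\in\ck$, and by the very definition of the base every element of $\ck$ is sent to $1$ by $f$, so $f(Tx)=1=f(x)$.

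Next I would pass to an arbitrary nonzero $x\in X_+$. Because $f$ is strictly positive and coincides with the norm on the cone (recall $f(x)=\|x\|_{\ck}$ for $x\in X_+$), we have $f(x)=\|x\|>0$, so the normalized element $x/f(x)$ lies in $\ck$. Applying the previous step to $x/f(x)$ and using the linearity of both $T$ and $f$ gives $f(Tx)/f(x)=f\big(T(x/f(x))\big)=1$, that is $f(Tx)=f(x)$; the case $x=0$ is trivial by linearity.

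Finally, for a general $x\in X$ I would invoke the fact that $X_+$ is generating, writing $x=y-z$ with $y,z\in X_+$. Linearity of $T$ and $f$ together with the positive-cone case then yields $f(Tx)=f(Ty)-f(Tz)=f(y)-f(z)=f(y-z)=f(x)$, which is the assertion. The only point requiring care is the strict-positivity step: one must ensure $f(x)>0$ for nonzero positive $x$ so that the normalization is legitimate, and this is exactly the property guaranteed by $f$ being a strictly positive functional. Beyond this bookkeeping I do not expect any genuine obstacle.
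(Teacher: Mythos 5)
Your proof is correct, and since the paper leaves this lemma unproved (remarking only that it follows "from the definition of Markov operator"), your three-step argument --- first on $\ck$, then on $X_+$ by normalizing with $f(x)=\|x\|>0$, then on all of $X$ via the generating cone and linearity --- is exactly the routine verification the authors intend. No gaps; the strict positivity of $f$ on $X_+\setminus\{0\}$ that you flag is indeed the only point needing care, and it is guaranteed by the definition of a base.
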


\begin{example}\label{E2} Let us consider several examples of Markov operators.

\begin{itemize}

\item[1.] Let $X=L^1(E,\m)$ be the classical $L^1$-space. Then any transition probability $P(x,A)$ defines a Markov operator $T$ on $X$, whose dual $T^*$ acts on $L^\infty(E,\m)$ as follows
$$
(T^*f)(x)=\int f(y)P(x,dy), \ \ f\in L^\infty.
$$

\item[2.] Let $M$ be a von Neumann algebra, and consider $(M_{h,*},M_{*,+},\ck,\id)$ as in (a) Example \ref{E1}.
Let $\Phi: M\to M$ be a positive, unital ($\Phi(\id)=\id$) linear mapping. Then the operator given by $(Tf)(x)=f(\Phi(x))$, where $f\in M_{h,*}, x\in M$, is a Markov operator.

\item[3.] Let $X=C[0,1]$ be the space of real-valued continuous
functions on $[0,1]$. Denote
$$
X_+=\big\{x\in X: \ \max_{0\leq t\leq 1}|x(t)-x(1)|\leq 2
x(1)\big\}.
$$
Then $X_+$ is a generating cone for $X$, and $f(x)=x(1)$ is a
strictly positive linear functional. Then $\ck=\{x\in X_+: \
f(x)=1\}$ is a base corresponding to $f$. One can check that the
base norm $\|x\|$ is equivalent to the usual one
$\|x\|_{\infty}=\max\limits_{0\leq t\leq 1}|x(t)|$. Due to
closedness of $X_+$ we conclude that $(X,X_+,\ck,f)$ is an abstract state space.
Let us define a mapping $T$ on $X$ as follows:
$$(Tx)(t)=tx(t).
$$
It is clear that $T$ is a
Markov operator on $X$.

\item[4.] Let $X$ be a Banach space over $\br$. Consider the abstract state space $(\mathcal{X},\mathcal{X}_+,\tilde \ck,f)$ constructed in (c) Example \ref{E1}.  Let $T:X\to X$ be a linear bounded operator with $\|T\|\leq 1$.
Then the operator $\mathcal{T}: \mathcal{X}\to\mathcal{X}$ defined by
$\mathcal{T}(\a,x)=(\a,Tx)$ is a Markov operator.

\item[5.]  Let $A$ be the disc algebra, and let $X$ be the abstract state space as in (d) Example \ref{E1}.
A mapping $T$ given by $Tf(z)=zf(z)$ is clearly a Markov operator on $X$.
 \end{itemize}
\end{example}

\begin{Definition}\cite{M0}
Let $(X,X_+,\ck,f)$ be an abstract state space, and let $T:X\to X$ be a Markov operator.
Then the \textit{Dobrushin's ergodicity
coefficient} of $T$ is given by
\begin{equation}
\label{db} \d(T)=\sup_{x\in N,\ x\neq 0}\frac{\|Tx\|}{\|x\|},
\end{equation}
where
\begin{equation}
\label{NN} N=\{x\in X: \ f(x)=0\}.
\end{equation}
\end{Definition}

\begin{remark} We note that if $X=L^1(E,\m)$, the notion of
the Dobrushin ergodicity coefficient was studied in
\cite{C} and \cite{D}. In a non-commutative setting, i.e. when $X^*$
is a von Neumann algebra, such a notion was introduced in
\cite{M}. We should stress that this coefficient has been
independently defined in \cite{GQ}.
\end{remark}

\section{Generalized Dobrushin Ergodicity Coefficient}

In this section, we introduce a generalized notion of the Dobrushin's ergodicity coefficient \eqref{db}, and investigate its properties.

\begin{Definition} Let $(X,X_+,\ck,f)$ be an abstract state space and let $T:X\to X$ be a linear bounded
operator. Consider a non-trivial projection operator $P: X\to X$ (i.e. $P^2=P$). Then we define
\begin{equation} \label{Dbp} \d_P(T)=\sup_{x\in N_P,\ x\neq
0}\frac{\|Tx\|}{\|x\|},
\end{equation}
where
\begin{equation}\label{Np} N_P=\{x\in X: \
Px=0\}.
\end{equation}
If $P=I$, we put $ \d_P(T)=1$. The quantity $\d_P(T)$ is called the \textit{generalized Dobrushin ergodicity
coefficient of $T$ with respect to $P$}.
\end{Definition}

 We notice that if $X=\br^n$, then there are some formulas to calculate this coefficient (see \cite{H,HR}).

In the following remarks, let us have a brief comparison between the coefficients $\d_P(T)$ and $\d(T)$.

\begin{remark}
Let $y_0\in \ck$ and consider the projection $Px=f(x)y_0$. Then one can see that $N_P$ coincides with
\[N= \{ x\in X;\ f(x)=0\},\]
and in this case $\d_P(T)=\d(T)$. Hence, $\d_P(T)$ indeed is a generalization of $\d(T)$.
\end{remark}

\begin{remark}
Let $P$ be a Markov projection on $X$. Then, for any Markov operator $T:X\to X$
\[\d_P(T) \leq \d(T).\]
Indeed, it is enough to show that $N_P\subseteq N$. Let $x\in N_P$, so $Px=0$. Due to Lemma \ref{Lem0}, we have
\[N=\{x\in X;\ f(Px)=0\},\]
which yields $x\in N$, so $N_P\subseteq N$.

\end{remark}

In what follows, we examine main properties of $\d_P(T)$.

\begin{Proposition}
Let $T:X\to X$ be a linear bounded
operator. If $P$ and $Q$ are two projections on $X$ such that $Q\leq P$ (i.e. $QP=PQ=Q$), then $\d_P(T)\leq \d_Q(T)$.
\end{Proposition}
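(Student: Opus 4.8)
The plan is to show the inclusion $N_P \subseteq N_Q$ of the defining subspaces, from which the inequality between supremanda follows immediately. Recall that
\[
\d_P(T)=\sup_{x\in N_P,\ x\neq 0}\frac{\|Tx\|}{\|x\|},\qquad N_P=\{x\in X:\ Px=0\},
\]
and similarly for $Q$. The quantity $\d_Q(T)$ is a supremum over a set of ratios indexed by $N_Q$, while $\d_P(T)$ is the same supremum indexed by $N_P$. Therefore, if every vector killed by $P$ is also killed by $Q$, then the index set for $\d_P(T)$ is a subset of the index set for $\d_Q(T)$, and the supremum over the smaller set cannot exceed the supremum over the larger set.

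First I would use the hypothesis $Q\le P$, which by the stated convention means $QP=PQ=Q$. Take any $x\in N_P$, so that $Px=0$. Applying $Q$ and using $QP=Q$ gives
\[
Qx = (QP)x = Q(Px) = Q(0) = 0,
\]
so $x\in N_Q$. This establishes $N_P\subseteq N_Q$. (Note that only the relation $QP=Q$ is needed here; the companion relation $PQ=Q$ is not used for this direction.)

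Once the inclusion is in hand, the conclusion is purely formal: for each nonzero $x\in N_P$ we also have $x\in N_Q$, so the ratio $\|Tx\|/\|x\|$ appears among the ratios defining $\d_Q(T)$; hence
\[
\d_P(T)=\sup_{\substack{x\in N_P\\ x\neq 0}}\frac{\|Tx\|}{\|x\|}\ \leq\ \sup_{\substack{x\in N_Q\\ x\neq 0}}\frac{\|Tx\|}{\|x\|}=\d_Q(T).
\]
I would only need to be mildly careful about degenerate situations: if $N_P=\{0\}$ the supremum defining $\d_P(T)$ is over the empty set and the inequality holds vacuously under any sensible convention, and if one of $P,Q$ equals the identity the paper's stipulation $\d_I(T)=1$ must be checked against the inequality. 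Since $\|T\|=1$ is not assumed for a general bounded $T$, I would verify that the identity-case convention is consistent with $Q\le P$; but in the generic case where both projections are nontrivial, there is genuinely no obstacle here. The entire argument is a one-line set inclusion followed by the monotonicity of the supremum, so I do not anticipate any hard step.
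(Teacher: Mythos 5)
Your proof is correct and follows exactly the same route as the paper's: establish the inclusion $N_P\subseteq N_Q$ from $Qx=QPx=0$ and then invoke monotonicity of the supremum. The extra remarks about degenerate cases are harmless but not needed.
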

\begin{proof}
Assume that $Q\leq P$. Then for every $x\in N_P$ we get $Qx=QPx=0$, therefore $N_P\subseteq N_Q$. Hence, we get the desired inequality.
\end{proof}

\begin{Corollary}
If $P$ and $Q$ are orthogonal projections on $X$, then $\d_{P+Q}(T)\leq \d_P(T)$.
\end{Corollary}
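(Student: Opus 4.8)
The plan is to reduce the Corollary to the Proposition that immediately precedes it. The Proposition states that if $Q \leq P$ for two projections (meaning $QP = PQ = Q$), then $\d_P(T) \leq \d_Q(T)$. So my strategy is to exhibit $P$ as dominating the sum $P+Q$ in this partial order, i.e. to verify that $P \leq P+Q$, and then invoke the Proposition directly.

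First I would check that $P+Q$ is itself a projection, since the statement $\d_{P+Q}(T)$ only makes sense if $P+Q$ is a projection. Because $P$ and $Q$ are orthogonal, we have $PQ = QP = 0$, and then $(P+Q)^2 = P^2 + PQ + QP + Q^2 = P + Q$, so $P+Q$ is indeed a projection. Next, the key verification is the domination $P \leq P+Q$ in the sense defined in the Proposition, namely that $P(P+Q) = (P+Q)P = P$. Using orthogonality, $P(P+Q) = P^2 + PQ = P + 0 = P$, and similarly $(P+Q)P = P^2 + QP = P + 0 = P$. Hence $P \leq P+Q$ with the roles matched so that the ``larger'' projection is $P+Q$ and the ``smaller'' one is $P$.

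Now I apply the Proposition with the larger projection $P+Q$ playing the role of $P$ in the statement and the smaller projection $P$ playing the role of $Q$. The Proposition then gives $\d_{P+Q}(T) \leq \d_P(T)$, which is exactly the claimed inequality. (Equivalently, one can argue directly from the definition: since $P(P+Q)=P$, any $x$ with $(P+Q)x = 0$ also satisfies $Px = P(P+Q)x = 0$, so $N_{P+Q} \subseteq N_P$, and the supremum defining $\d_P(T)$ is taken over a larger set than that defining $\d_{P+Q}(T)$, yielding $\d_{P+Q}(T) \leq \d_P(T)$.)

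I do not anticipate any genuine obstacle here, as the Corollary is a routine specialization of the Proposition. The only point requiring minor care is the bookkeeping of which projection is ``larger'': the inequality reverses the subscript order relative to the partial order, so I must make sure that $P+Q$ (not $P$) is the dominating projection, which the orthogonality computation confirms. A secondary point worth noting is that the Proposition's conclusion requires the projections involved to be non-trivial for the coefficients to be defined as stated; assuming $P$ and $Q$ are the non-trivial projections under consideration throughout this section, this causes no difficulty.
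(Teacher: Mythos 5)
Your proposal is correct and follows exactly the paper's own route: show that $P+Q$ is a projection dominating $P$ (via $P(P+Q)=(P+Q)P=P$, using $PQ=QP=0$) and then apply the preceding Proposition with the roles of the two projections assigned so that $P+Q$ is the larger one. The paper states this in one line; you have merely supplied the routine verifications.
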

\begin{proof}
As $P$ and $Q$ are orthogonal projections, $P+Q$ is a projection which dominates $P$, hence the corollary follows directly from the previous proposition.
\end{proof}

Before establishing our main result of this section, we need the following auxiliary fact.

\begin{Lemma}\label{Lem2} Let $(X,X_+,\ck,f)$ be an abstract state space and let $P$ be a Markov projection. Then for every $x\in N_P$ there
exist $u,v\in \ck$ with $u-v\in N_P$ such that
$$
x=\a(x)(u-v),
$$
where $\a(x)\in \br_+$ and $\a(x)\leq \frac{\l}{2}\|x\|$.
\end{Lemma}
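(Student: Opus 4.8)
The plan is to decompose an arbitrary element $x\in N_P$ into a scaled difference of two base elements, using the $\l$-generating property of $X_+$ guaranteed by Theorem~1.1 (condition (iii)), and then to correct the decomposition so that the difference itself lands in $N_P$. The bound $\a(x)\leq \frac{\l}{2}\|x\|$ should emerge directly from tracking the norms through the $\l$-generating decomposition together with the additivity of the norm on $X_+$.

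First I would take $x\in N_P$ with $x\neq 0$ (the case $x=0$ being trivial). Since $X_+$ is $\l$-generating, I can write $x=y-z$ with $y,z\in X_+$ and $\|y\|+\|z\|\leq \l\|x\|$. By Lemma~2.6, a Markov operator preserves $f$, and since $P$ is a Markov projection one checks that $f(x)=f(Px)=f(0)=0$, whence $f(y)=f(z)$. Call this common value $c=f(y)=f(z)\geq 0$. Because $f(w)=\|w\|$ for $w\in X_+$, we have $\|y\|=\|z\|=c$, and then $\|y\|+\|z\|=2c\leq \l\|x\|$, i.e. $c\leq \frac{\l}{2}\|x\|$. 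Now set $u=\frac{1}{c}y$ and $v=\frac{1}{c}z$; these lie in $\ck$ since $f(u)=f(v)=1$ and $u,v\in X_+$. Taking $\a(x)=c$, we obtain $x=\a(x)(u-v)$ with $\a(x)\leq \frac{\l}{2}\|x\|$, exactly the claimed bound.

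The step I expect to be the main obstacle is producing $u,v$ with the \emph{additional} requirement that $u-v\in N_P$, i.e. $P(u-v)=0$. The naive decomposition gives $Px=\a(x)P(u-v)=0$, so $P(u-v)=0$ automatically once $\a(x)\neq 0$ — so when $x\neq 0$ there is nothing further to arrange. The genuinely delicate point is therefore the degenerate case $c=0$: if $f(y)=f(z)=0$ then $\|y\|=\|z\|=0$, forcing $y=z=0$ and hence $x=0$, contradicting $x\neq 0$; so $c>0$ whenever $x\neq 0$ and the division is legitimate. I would make sure to phrase the statement for $x=0$ separately (take $\a(0)=0$ and any fixed $u=v\in\ck$), so that the claim $u-v\in N_P$ holds vacuously there too.

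A subtlety worth double-checking is whether the bound $\|y\|+\|z\|\leq\l\|x\|$ from $\l$-generation is available for \emph{this particular} $\l$, or only in the limiting sense; by Theorem~1.1 the relevant hypothesis is that $X_+$ is $\l$-generating for all $\l>1$, so I would either fix the ambient $\l$ as part of the abstract state space data or, more carefully, derive the inequality $\a(x)\leq\frac{\l}{2}\|x\|$ for each admissible $\l$ and note that the constant in the lemma refers to that same $\l$. With the additivity of the norm on $X_+$ doing the essential work in equating $\|y\|,\|z\|$ with $f$-values, the argument is short and the only real content is the $\l$-generating decomposition followed by normalization into $\ck$.
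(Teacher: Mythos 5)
Your proposal is correct and follows essentially the same route as the paper's proof: decompose $x=y-z$ via the $\l$-generating property, use the Markovianity of $P$ (through $f(Px)=f(x)$) together with additivity of the norm on $X_+$ to conclude $\|y\|=\|z\|$, and normalize to get $u,v\in\ck$ with $u-v\in N_P$. Your explicit treatment of the degenerate cases $x=0$ and $c=0$, and the remark about which $\l$ is meant, are minor refinements the paper leaves implicit.
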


\begin{proof} Given any $x\in N_P$, we have $Px=0$.  As $X_+$ is $\l$-generating of $X$, there exist $x_+,x_-\in X_+$ such that $x=x_+-x_-$ with $\|x_+\|+\|x_-\|\leq \l \|x\|$. Clearly
$Px_+=Px_-$. As $P$ a Markov projection
\[\|Px_+\|=f(Px_+)=f(x_+)=\|x_+\|,\]
which yields $\|x_+\|=\|x_-\|$. Therefore,
\begin{eqnarray*}
x &=& \frac{x_+}{\|x_+\|}\|x_+\|- \frac{x_-}{\|x_-\|}\|x_+\|\\
&=& \|x_+\|\left (\frac{x_+}{\|x_+\|}- \frac{x_-}{\|x_-\|}  \right ).
\end{eqnarray*}
letting $u=\frac{x_+}{\|x_+\|}$ and $v=\frac{x_-}{\|x_-\|}$, so $u,v\in \ck$.  Moreover,  $Pu=Pv$,  then $u-v\in N_P$, and letting $\a(x):=\|x_+\|\leq \frac{\l}{2}\|x\|$, hence the lemma is proved.\\
\end{proof}

Let us denote by $\Sigma(X)$ the set of all Markov operators defined
on $X$, and by $\Sigma_P(X)$ we denote the set of all Markov operators
$T$ on $X$ with $PT=TP$.\\

Now, we prove the following essential result about main properties of $\d_P$.

\begin{Theorem}\label{Dbp-prp} Let $(X,X_+,\ck,f)$ be an abstract state space, $P$ be a projection on $X$ and let $T,S\in \Sigma(X)$. Then:
\begin{enumerate}
\item[(i)] $0\leq \d_P(T)\leq 1$;
\item[(ii)]  $|\d_P(T)-\d_P(S)|\leq\d_P(T-S)\leq \|T-S\|$;
\item[(iii)] if $P\in \Sigma(X)$,  one has
\begin{equation}\label{dpuv} \d_P(T)\leq \frac{\l}{2}\sup \{\|Tu-Tv\|;\ u,v\in \ck \ \text{with}\ u-v\in N_P \}.
\end{equation}
\item[(iv)]  if $H: X\to X$ is a bounded linear
operator such that $HP=PH$, then $$\d_P(TH)\leq\d_P(T)\|H\|;$$
\item[(v)]   if $H: X\to X$ is a bounded linear
operator such that $PH=0$, then $$\|TH\|\leq\d_P(T)\|H\|;$$
\item[(vi)]  if $S\in \Sigma_P(X)$, then $$\d_P(TS)\leq\d_P(T)\d_P(S).$$
\end{enumerate}
\end{Theorem}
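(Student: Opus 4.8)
The plan is to read off all six items from the defining supremum in \eqref{Dbp}, exploiting that a Markov operator satisfies $\|T\|=1$ and invoking the structural Lemma \ref{Lem2} only for the single item that refers to the base $\ck$. The unifying technical point is the following invariance check: under the appropriate commutation or annihilation hypothesis the auxiliary operator carries the subspace $N_P$ into $N_P$, after which each bound follows by chaining elementary ratio inequalities and passing to the supremum.

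For (i), the lower bound is immediate since $\d_P(T)$ is a supremum of nonnegative ratios, while $\d_P(T)\le 1$ follows from $T$ being Markov: then $\|T\|=1$, so $\|Tx\|\le\|x\|$ for every $x\in N_P$. For (ii), I would fix $x\in N_P\setminus\{0\}$, apply the triangle inequality to $Tx=(T-S)x+Sx$, divide by $\|x\|$ and take the supremum to obtain $\d_P(T)\le\d_P(T-S)+\d_P(S)$; the symmetric estimate, using $\|(S-T)x\|=\|(T-S)x\|$, yields $|\d_P(T)-\d_P(S)|\le\d_P(T-S)$. The final inequality $\d_P(T-S)\le\|T-S\|$ holds because the supremum defining $\d_P$ ranges only over the restricted set $N_P$, hence is dominated by the full operator norm.

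Items (iv), (v) and (vi) all rest on the invariance observation. For (iv), if $HP=PH$ and $x\in N_P$ then $P(Hx)=H(Px)=0$, so $Hx\in N_P$; factoring $\|THx\|/\|x\|=(\|T(Hx)\|/\|Hx\|)(\|Hx\|/\|x\|)\le\d_P(T)\|H\|$ on the set where $Hx\ne 0$ (the ratio being zero otherwise) and taking the supremum gives the claim. For (v), the stronger hypothesis $PH=0$ forces $Hx\in N_P$ for every $x\in X$, so $\|THx\|\le\d_P(T)\|Hx\|\le\d_P(T)\|H\|\,\|x\|$ for all $x$, and the supremum over the whole space produces $\|TH\|\le\d_P(T)\|H\|$. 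For (vi), with $S\in\Sigma_P(X)$ one again has $Sx\in N_P$ when $x\in N_P$, now via $PS=SP$; the refinement over (iv) is that the second factor $\|Sx\|/\|x\|$ is itself bounded by $\d_P(S)$ rather than by $\|S\|$, which sharpens the estimate to the submultiplicative bound $\d_P(TS)\le\d_P(T)\d_P(S)$.

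The item carrying genuine content, and the one I expect to require the most care, is (iii). Assuming $P\in\Sigma(X)$, I would apply Lemma \ref{Lem2} to write an arbitrary $x\in N_P$ as $x=\a(x)(u-v)$ with $u,v\in\ck$, $u-v\in N_P$, and $\a(x)\le\tfrac{\l}{2}\|x\|$. Linearity then gives $\|Tx\|=\a(x)\|Tu-Tv\|$, so $\|Tx\|/\|x\|=(\a(x)/\|x\|)\|Tu-Tv\|\le\tfrac{\l}{2}\|Tu-Tv\|$; bounding $\|Tu-Tv\|$ by the supremum over all admissible pairs and then taking the supremum over $x$ yields \eqref{dpuv}. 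The subtlety here is that this passage from the abstract ratio to the concrete base quantity genuinely uses both the $\l$-generating property of $X_+$ and the Markov hypothesis on $P$, since these are exactly what Lemma \ref{Lem2} needs to guarantee that the decomposition stays inside $N_P$; without them the reduction would break down.
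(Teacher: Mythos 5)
Your proposal is correct and follows essentially the same route as the paper: items (i), (iv), (v), (vi) via the invariance of $N_P$ and chained ratio estimates, and item (iii) via Lemma \ref{Lem2} exactly as in the paper's proof. The only cosmetic difference is in (ii), where you pass to the supremum through subadditivity of the triangle inequality rather than extracting a near-maximizing $x_\epsilon$ as the paper does; the two manipulations are interchangeable.
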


\begin{proof}
(i) As $T$ is a Markov operator and by the definition of $\d_P$ one gets $0\leq \d_P(T)\leq \|T\|= 1$. (ii) The second inequality is immediately obtained from \eqref{Dbp}. To establish the first  one, take any $\epsilon >0$.
Then there exists an $x_\epsilon \in N_P$, with $\|x_\epsilon \|=1$ such that $\d_P(T)\leq \|Tx_\epsilon \|+\epsilon$.
Hence,
\begin{eqnarray*}
\d_P(T)-\d_P(S)&\leq &\|Tx_\i\|+\i-\sup_{x\in N_P,\|x\|=1}\|Sx\|\\
&\leq & \|Tx_\i\|-\|S x_\i\|+\i\\
&\leq&\|(T-S)x_\i\|+\i\\
&\leq&\sup_{x\in N_P: \ \|x\|=1}\|(T-S)x\|+\i\\
&=&\d_P(T-S)+\i
\end{eqnarray*}
which implies the assertion.\\
(iii) For all $x\in N_P$, by Lemma \ref{Lem2} there exist $u,v\in \ck$ with $u-v\in N_P$ such that
\[x=\a(x)(u-v),\ \text{where}\ \a(x)\in \br_+ \ \text{with}\ \a(x)\leq \frac{\l}{2}\|x\| .\] Therefore,
\begin{eqnarray*}
\frac{\|T(x)\|}{\|x\|} &=& \frac{\a(x)}{\|x\|}\|T(u)-T(v)\|\\
&\leq & \frac{\l}{2}\|T(u)-T(v)\|.
\end{eqnarray*}

\noindent Hence, by the definition of $\d_p$ and the previous inequality,  we obtain (\ref{dpuv}).\\

\noindent (iv) Suppose that $H$ is a bounded linear operator on $X$ which commutes with $P$. For all $x\in N_P$, we have
 $$PHx=HPx=0,$$ then $Hx\in N_P$. Therefore,
\begin{eqnarray*}
\|THx\| &\leq & \d_P(T)\|Hx\|\\
& \leq & \d_P(T) \|H\|\|x\|,
\end{eqnarray*}
which implies that
\[\frac{\|THx\|}{\|x\|}\leq \d_P(T)\|H\|,\ \forall \ x\in N_P\] and hence we have $\d_P(TH)\leq \d_P(T)\|H\|$.

\noindent (v) if $H$ is a bounded linear operator on $X$ with $PH=0$, then for all $x\in X$, $Hx \in N_P$. Therefore,
\begin{eqnarray*}
\|THx\| &\leq & \d_P(T)\|Hx\|\\
& \leq & \d_P(T) \|H\|\|x\|,
\end{eqnarray*}
which yields
\[\frac{\|THx\|}{\|x\|}\leq \d_P(T)\|H\|,\ \forall \ x\in X.\]

\noindent (vi) As $S\in \Sigma_P(X)$, we have $Sx\in N_P$, for all $x\in N_P$. Then
 \begin{eqnarray*}
\|T(Sx)\| &\leq & \d_P(T)\|Sx\|\\
& \leq & \d_P(T) \d_P(S)\|x\|,
\end{eqnarray*}
which implies
\[\frac{\|TSx\|}{\|x\|}\leq \d_P(T)\d_P(S),\ \forall \ x\in N_P,\]
then we get
\[\d_P(TS)\leq \d_P(T)\d_P(S),\]
and hence the theorem is proved.
\end{proof}

%
%
%
%

Now, let us consider the case of strong abstract state spaces. In this setting, by Theorem (\ref{Yo}), $X_+$ is $1$-generating and the norm is additive on $X_+$. Following the arguments of the proof of
 Lemma \ref{Lem2}, one can prove the next result.

\begin{Lemma}\label{simtoLem2} Let $(X,X_+,\ck,f)$ be a strong abstract state space and let $P$ be a Markov projection. Then for every $x,y\in X$ with $x-y\in N_P$ there
exist $u,v\in \ck$ with $u-v\in N_P$ such that
$$
x-y=\frac{\|x-y\|}{2}(u-v).
$$
\end{Lemma}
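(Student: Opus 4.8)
The plan is to mirror the proof of Lemma \ref{Lem2}, specializing to the case $\l = 1$ that a strong abstract state space affords (by Theorem \ref{Yo}, $X_+$ is $1$-generating), and to upgrade the one-sided bound $\a(x)\le \frac{\l}{2}\|x\|$ of that lemma into the exact factor $\frac{\|x-y\|}{2}$ by combining $1$-generation with the triangle inequality.

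First I would set $z = x-y$, so $z\in N_P$ and $Pz=0$. If $z=0$ the claim is trivial: choosing any $u\in\ck$ and putting $v=u$ gives $u-v=0\in N_P$ and $z=\frac{\|z\|}{2}(u-v)$. Assume $z\neq 0$. By $1$-generation there exist $z_+,z_-\in X_+$ with $z=z_+-z_-$ and $\|z_+\|+\|z_-\|\le\|z\|$. The key new observation, which is exactly what sharpens the constant, is that $\|z\|=\|z_+-z_-\|\le\|z_+\|+\|z_-\|$ by the triangle inequality, so in fact $\|z_+\|+\|z_-\|=\|z\|$.

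Next, exactly as in Lemma \ref{Lem2}, from $Pz=0$ I get $Pz_+=Pz_-$; since $P$ is a Markov projection and the norm agrees with $f$ on $X_+$, Lemma \ref{Lem0} yields $\|Pz_+\|=f(Pz_+)=f(z_+)=\|z_+\|$ and likewise $\|Pz_-\|=\|z_-\|$, whence $Pz_+=Pz_-$ forces $\|z_+\|=\|z_-\|$. Together with the equality of the previous step this gives $\|z_+\|=\|z_-\|=\frac{\|z\|}{2}$. Finally I would set $u=z_+/\|z_+\|$ and $v=z_-/\|z_-\|$, both in $\ck$; since $\|z_+\|=\|z_-\|$ and $Pz_+=Pz_-$ we have $Pu=Pv$, so $u-v\in N_P$, and
$$
z=z_+-z_-=\frac{\|z\|}{2}\left(\frac{z_+}{\|z_+\|}-\frac{z_-}{\|z_-\|}\right)=\frac{\|z\|}{2}(u-v),
$$
as required.

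The only genuine subtlety, and it is hardly an obstacle, is recognizing that $1$-generation and the triangle inequality together pin $\|z_+\|+\|z_-\|$ down to equal $\|z\|$ rather than merely bounding it; in the general $\l$-generating setting of Lemma \ref{Lem2} one has only the one-sided estimate, which is precisely why that result produces an inequality for $\a(x)$ in place of the exact coefficient $\|x-y\|/2$.
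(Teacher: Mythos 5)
Your proof is correct and follows exactly the route the paper intends: the paper gives no separate argument for this lemma, stating only that it follows "the arguments of the proof of Lemma \ref{Lem2}" with $1$-generation in place of $\l$-generation. Your added observation — that the triangle inequality turns the bound $\|z_+\|+\|z_-\|\le\|z\|$ into an equality, forcing $\|z_+\|=\|z_-\|=\|z\|/2$ — is precisely the step the paper leaves implicit, and you have supplied it correctly.
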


Consequently, \eqref{dpuv} can be modified as follows:

\begin{Proposition}\label{DDs} Let $(X,X_+,\ck,f)$ be a strong abstract state space, $P$ be a Markov projection on $X$ and let $T\in \Sigma(X)$. Then:
\begin{equation}\label{simtodpuv} \d_P(T)= \frac{1}{2}\sup \{\|Tu-Tv\|;\ u,v\in \ck \ \text{with}\ u-v\in N_P \}.
\end{equation}
\end{Proposition}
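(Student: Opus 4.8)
The plan is to prove the two inequalities separately. Write
$$
M:=\frac{1}{2}\sup\bigl\{\|Tu-Tv\|\ :\ u,v\in\ck,\ u-v\in N_P\bigr\},
$$
so that the assertion becomes $\d_P(T)=M$. The bound $\d_P(T)\le M$ is essentially Theorem \ref{Dbp-prp}(iii): in a strong abstract state space $X_+$ is $1$-generating by Theorem \ref{Yo}, so the constant $\l$ appearing there may be taken to be $1$. However, the cleanest route, which also makes transparent why one gets equality and not merely an inequality, is to invoke Lemma \ref{simtoLem2} directly.

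For $\d_P(T)\le M$, I fix an arbitrary $x\in N_P$ with $x\neq 0$. Since $0\in N_P$, we have $x=x-0\in N_P$, and Lemma \ref{simtoLem2} (applied with $y=0$) supplies $u,v\in\ck$ with $u-v\in N_P$ such that $x=\frac{\|x\|}{2}(u-v)$. Applying $T$, using homogeneity of the norm, and dividing by $\|x\|$ gives
$$
\frac{\|Tx\|}{\|x\|}=\frac{1}{2}\|Tu-Tv\|\le M .
$$
Taking the supremum over all nonzero $x\in N_P$ yields $\d_P(T)\le M$. The strong hypothesis is doing the decisive work here: it is precisely the \emph{exact} factorization $x=\frac{\|x\|}{2}(u-v)$ furnished by Lemma \ref{simtoLem2} — rather than the one-sided estimate $\a(x)\le\frac{\l}{2}\|x\|$ of Lemma \ref{Lem2} — that converts the inequality of Theorem \ref{Dbp-prp}(iii) into an identity.

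For the reverse inequality $M\le\d_P(T)$, I take any $u,v\in\ck$ with $u-v\in N_P$; if $u=v$ the contribution is $0$, so assume $u\neq v$. Then $u-v$ is a nonzero element of $N_P$, so the definition \eqref{Dbp} of $\d_P$ gives $\|Tu-Tv\|=\|T(u-v)\|\le\d_P(T)\,\|u-v\|$. Since $u,v\in\ck\subset X_+$ satisfy $\|u\|=f(u)=1=f(v)=\|v\|$, the triangle inequality gives $\|u-v\|\le 2$, whence $\tfrac12\|Tu-Tv\|\le\d_P(T)$. Passing to the supremum over all admissible pairs $(u,v)$ gives $M\le\d_P(T)$, and combining the two bounds finishes the proof.

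I do not expect a genuine obstacle at the level of the Proposition itself: the substantive content is entirely packaged into Lemma \ref{simtoLem2}, which I am assuming. The only points needing care are verifying that the pair produced by that lemma indeed lies in $\ck$ with $u-v\in N_P$ (both are built into its conclusion) and that $\|u\|=\|v\|=1$ on $\ck$, which follows from the identity $f(x)=\|x\|$ for $x\in X_+$ together with the definition of the base. Everything else is routine bookkeeping.
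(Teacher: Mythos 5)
Your proof is correct and follows essentially the same route the paper intends: the inequality $\d_P(T)\le M$ comes from the exact factorization $x=\frac{\|x\|}{2}(u-v)$ of Lemma \ref{simtoLem2} (equivalently, Theorem \ref{Dbp-prp}(iii) with $\l=1$ in the strong case), and the reverse inequality is the routine estimate $\|u-v\|\le 2$ for $u,v\in\ck$. The paper leaves these details implicit, and you have filled them in accurately.
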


Hence, we have the following result.

\begin{Corollary}\label{dpis0}
Let $(X,X_+,\ck,f)$ be a strong abstract state space, $P$ be a Markov projection on $X$ and $T\in \Sigma(X)$. If $\d_P(T)=0$, then $T=TP$.
\end{Corollary}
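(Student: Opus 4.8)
The plan is to unwind the hypothesis $\d_P(T)=0$ directly from the definition \eqref{Dbp} of the generalized Dobrushin coefficient and then exploit the idempotency of $P$. First I would observe that, since $\d_P(T)=\sup_{x\in N_P,\ x\neq 0}\|Tx\|/\|x\|$ and each quotient is nonnegative, the vanishing of this supremum forces $\|Tx\|=0$, and hence $Tx=0$, for every nonzero $x\in N_P$; as $T0=0$ trivially, $T$ annihilates the whole subspace $N_P=\ker P$.

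Next I would note that the range of $I-P$ is contained in $N_P$: for any $x\in X$, the idempotency $P^2=P$ gives $P(x-Px)=Px-P^2x=0$, so $x-Px\in N_P$. Combining this with the previous step yields $T(x-Px)=0$ for all $x\in X$, that is $Tx=TPx$. Since $x$ is arbitrary, this is exactly the operator identity $T=TP$.

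There is no serious obstacle here; the only point requiring care is the passage from the vanishing of the supremum to the pointwise conclusion $Tx=0$, which merely uses that $\|\cdot\|$ is a genuine norm, as holds in any abstract state space. In fact neither the strong abstract state space hypothesis nor the Markov character of $P$ is needed for this particular implication; they are inherited from the surrounding discussion. Should one nonetheless prefer to stay within the framework of Proposition \ref{DDs}, an equivalent route is available: $\d_P(T)=0$ forces $Tu=Tv$ whenever $u,v\in\ck$ satisfy $u-v\in N_P$, and Lemma \ref{simtoLem2} together with linearity then recovers $Tx=0$ on $N_P$; but the direct computation above is shorter.
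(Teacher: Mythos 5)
Your proof is correct, but it follows a genuinely different and more elementary route than the paper's. The paper deduces the corollary from Proposition \ref{DDs}: from \eqref{simtodpuv} and $\d_P(T)=0$ it gets $Tu=Tv$ for all $u,v\in\ck$ with $u-v\in N_P$, then uses the Markovianity of $P$ (so that $Pu\in\ck$ and $u-Pu\in N_P$) to conclude $Tu=TPu$ on the base $\ck$, and finally extends to $X_+$ by homogeneity and to all of $X$ via the decomposition $x=x_+-x_-$. You instead read $\d_P(T)=0$ straight off the definition \eqref{Dbp} to get $T\equiv 0$ on $N_P=\ker P$, observe that $(I-P)X\subseteq\ker P$ by idempotency, and conclude $T(I-P)=0$, i.e.\ $T=TP$. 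Your observation that this argument needs neither the strong abstract state space hypothesis nor the Markov character of $P$ nor even $T\in\Sigma(X)$ is accurate: those hypotheses enter the paper's proof only because it routes through \eqref{simtodpuv}, which is where they are genuinely used. What the paper's approach buys is continuity with the surrounding discussion of the base-$\ck$ formula for $\d_P$; what yours buys is brevity and a strictly weaker set of hypotheses. The one step deserving care --- passing from the vanishing of the supremum to $Tx=0$ pointwise on $N_P$ --- is handled correctly, since each quotient $\|Tx\|/\|x\|$ is nonnegative and $\|\cdot\|$ is a genuine norm on an abstract state space; and the degenerate case $N_P=\{0\}$ cannot occur under the hypothesis, because the paper's convention sets $\d_I(T)=1$.
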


\begin{proof}
If $\d_P(T)=0$, then by \eqref{simtodpuv} we have $Tu=Tv$, for all $u,v\in \ck$ with $u-v\in N_P$. As $P$ is a Markov projection, we have $Pu-u\in N_P$. Then
\[Tu=TPu,\ \forall u\in \ck.\]
If $x\in X_+$, then
\[Tx=\|x\|T\left (\frac{x}{\|x\|}\right )=\|x\|TP\left (\frac{x}{\|x\|}\right )=TPx.\]
Now, for all $x\in X$,  $x=x_+-x_-$, ($x_+,x_-\in X_+$). Therefore,
\[Tx=TPx_+-TPx_-=TPx,\]
which proves the assertion.
\end{proof}
From now, we consider general abstract state spaces.  The following proposition is crucial in our investigations.

\begin{Proposition}\label{Prob-1}
Let $(X,X_+,\ck,f)$ be an abstract state space, and let $P$ be a projection on $X$.
If $T\in \Sigma_P(X)$ and $\d_P(T^{n_0})<1$ for some $n_0\in \bn$, then $\|T^n(I-P)\|\to 0$.
\end{Proposition}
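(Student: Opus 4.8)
The plan is to reduce everything to the contraction and multiplicativity properties already recorded in Theorem \ref{Dbp-prp}, exploiting that $I-P$ lands in $N_P$. First I would set $Q=I-P$ and note that, since $P^2=P$, one has $PQ=P-P^2=0$. Next I would observe that every iterate $T^n$ is again a Markov operator: positivity is preserved under composition and $T^n(\ck)\subseteq\ck$, so $T^n\in\Sigma(X)$. Hence part (v) of Theorem \ref{Dbp-prp} applies with $T^n$ in the role of $T$ and with $H=Q$ (the hypothesis $PH=0$ being exactly $PQ=0$), which gives directly
\[
\|T^n(I-P)\|=\|T^nQ\|\leq \d_P(T^n)\,\|I-P\|.
\]
Thus it suffices to prove that $\d_P(T^n)\to 0$, and the whole problem is pushed onto the scalar sequence $\{\d_P(T^n)\}$.

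To control that sequence I would use the submultiplicativity in part (vi). Because $TP=PT$, every power $T^k$ lies in $\Sigma_P(X)$, so applying (vi) with $T^k$ as the first factor and a power of $T$ as the second factor is legitimate. Two consequences follow. Taking the second factor to be $T$ and using $\d_P(T)\leq 1$ from part (i) shows $\d_P(T^{n+1})\leq \d_P(T^n)\d_P(T)\leq \d_P(T^n)$, so $\{\d_P(T^n)\}$ is non-increasing in $[0,1]$ and therefore convergent. Iterating (vi) along multiples of $n_0$ gives $\d_P(T^{mn_0})\leq \d_P(T^{n_0})^m$, and since $\d_P(T^{n_0})<1$ by hypothesis this particular subsequence tends to $0$. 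A non-increasing sequence possessing a subsequence tending to $0$ must itself tend to $0$, whence $\d_P(T^n)\to 0$.

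Combining the displayed inequality with $\d_P(T^n)\to 0$ yields $\|T^n(I-P)\|\leq \d_P(T^n)\,\|I-P\|\to 0$, which is the claim. I do not expect any genuine analytic obstacle; the only points needing care are verifying that $T^n$ is actually Markov, so that part (v) may be invoked with $T^n$, and that each power $T^k$ commutes with $P$, so that part (vi) may be invoked — both being immediate from $T\in\Sigma_P(X)$. The argument is thus essentially a bookkeeping assembly of the already-established properties of $\d_P$, with the one small subtlety being the passage from the subsequence $\{\d_P(T^{mn_0})\}$ to the full sequence, handled by the monotonicity furnished by part (vi).
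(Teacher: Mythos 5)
Your proposal is correct and follows essentially the same route as the paper: both reduce the claim to the bound $\|T^n(I-P)\|\leq \d_P(T^n)\|I-P\|$ via part (v) of Theorem \ref{Dbp-prp} and then drive $\d_P(T^n)$ to zero using the submultiplicativity in part (vi) together with $\d_P(T^{n_0})<1$. The only (immaterial) difference is that the paper passes from the subsequence $\d_P(T^{kn_0})\leq\rho^k$ to the full sequence by writing $n=kn_0+r$, whereas you use the monotonicity of $\d_P(T^n)$.
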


\begin{proof}
Given such $n_0\in \bn$ and let $\rho =\d_P(T^{n_0})$. Then for a large $n\in \bn$, we write $n=kn_0+r\ (k,r\in \bn\  \text{and}\ r<n_0)$ and by (vi) of Theorem \ref{Dbp-prp}
$$\d_P(T^n)= \d_P(T^{kn_0}T^r) \leq \rho^k \d_P(T^r).$$  Again using (v) of the same theorem, we have
\[\|T^n(I-P)\|\leq \d_P(T^n)\|I-P\| \leq 2 \rho^k \d_P(T^r)\leq 2\rho^{\lfloor \frac{n}{n_0}\rfloor}  \to 0\  (\text{as}\ n\to \infty),\]
which proves the assertion.
\end{proof}

It is clear that if $T\in \Sigma_P(X)$, then  $T\in\Sigma_{I-P}(X)$. Therefore, it would be interesting to know a relation between
$\d_P(T)$ and $\d_{I-P}(T)$. Next result clarifies this question.

\begin{Proposition}
Let $T\in \Sigma_P(X)$. Then at most one of the following statements is valid:
\begin{enumerate}
\item[(i)] there exists $n_0\in \bn$ such that $\d_P(T^{n_0})<1$;
\item[(ii)] there exists $n_0\in \bn$ such that $\d_{I-P}(T^{n_0})<1$.
\end{enumerate}
\end{Proposition}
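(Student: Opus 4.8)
The plan is to derive the statement from Proposition \ref{Prob-1} by exploiting the symmetry between $P$ and its complementary projection $I-P$, and then to reach a contradiction from the fact that every iterate of a Markov operator has norm $1$. The key observation is that $I-P$ is itself a projection, since $(I-P)^2 = I - 2P + P^2 = I - P$, and that $T\in\Sigma_{I-P}(X)$ whenever $T\in\Sigma_P(X)$: from $PT=TP$ one computes $(I-P)T = T - PT = T - TP = T(I-P)$. Thus the hypotheses of Proposition \ref{Prob-1} are available \emph{both} for the projection $P$ and for the projection $I-P$, with the same operator $T$.

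With this in hand I would extract two decay statements. If (i) holds, i.e. $\d_P(T^{n_0})<1$ for some $n_0$, then Proposition \ref{Prob-1} gives directly $\|T^n(I-P)\|\to 0$. If instead (ii) holds, i.e. $\d_{I-P}(T^{n_0})<1$ for some $n_0$, then applying the very same proposition with $I-P$ playing the role of $P$ yields $\|T^n\bigl(I-(I-P)\bigr)\| = \|T^nP\|\to 0$. So each of the two conditions forces the corresponding ``half'' of $T^n$ to vanish asymptotically in operator norm.

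Finally I would argue by contradiction: assume both (i) and (ii) hold simultaneously. Writing $T^n = T^nP + T^n(I-P)$ and using the triangle inequality gives
$$
\|T^n\| \leq \|T^nP\| + \|T^n(I-P)\| \longrightarrow 0.
$$
On the other hand, $T$ is a Markov operator, and a product of Markov operators is again Markov (it is positive and maps $\ck$ into $\ck$), so every iterate $T^n$ is Markov and hence $\|T^n\|=1$ for all $n$. This contradicts $\|T^n\|\to 0$, so (i) and (ii) cannot both hold; that is, at most one of them is valid.

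There is no serious obstacle here once Proposition \ref{Prob-1} is granted: the proof is essentially a two-line consequence of it. The only points requiring genuine care are the verification that $T\in\Sigma_{I-P}(X)$ — which is exactly where the commutation hypothesis $T\in\Sigma_P(X)$ is used, since it is precisely $PT=TP$ that lets one invoke the proposition for the complementary projection — and the normalization $\|T^n\|=1$, which supplies the contradiction and crucially relies on $T$ being Markov rather than merely a norm-one operator.
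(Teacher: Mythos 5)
Your argument is correct and coincides with the paper's own proof: both verify that $T\in\Sigma_{I-P}(X)$, apply Proposition \ref{Prob-1} to each of $P$ and $I-P$ to get $\|T^n(I-P)\|\to 0$ and $\|T^nP\|\to 0$, and then contradict the Markovianity of $T$ via the triangle inequality. Your write-up is in fact slightly more careful than the paper's, since you explicitly justify the passage to the complementary projection.
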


\begin{proof}
Suppose that there exist $n_0,m_0\in \bn$ such that
\[\d_P(T^{n_0})<1\ \ \ \text{and}\ \ \ d_{I-P}(T^{m_0})<1.\]

\noindent Then by Proposition \ref{Prob-1}
\[\|T^{n}(I-P)\|\to 0.\]
As  $T\in \Sigma_{I-P(X)}$ and using the same argument
 \[\|T^{n}P\|\to 0.\]
Then
\[\|T^{n}\|= \|T^{n}(P+(I-P))\|\leq \|T^{n}(P)| + \|T^n(I-P)\|\to 0,\]
which contradicts the Markovianity of $T$.
\end{proof}

\begin{Corollary} \label{wpergnotcomp}
If $T\in \Sigma_P(X)$ and $\d_P(T^{n_0})<1$ for some $n_0\in\bn$, then $\d_{I-P}(T^{n})=1$, for all $n\in \bn$.
\end{Corollary}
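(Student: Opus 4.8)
The plan is to read the statement as an immediate consequence of the foregoing Proposition, supplemented by the universal upper bound from Theorem \ref{Dbp-prp}(i). First I would record two elementary observations. Since $P$ is a projection, so is $I-P$ (indeed $(I-P)^2=I-2P+P^2=I-P$), and since $T$ is Markov, every power $T^n$ is again Markov: positivity is preserved under composition, and $T^n(\ck)=T^{n-1}(T(\ck))\subseteq\cdots\subseteq\ck$. Hence $T^n\in\Sigma(X)$ for all $n$, and Theorem \ref{Dbp-prp}(i), applied with the projection $I-P$ in place of $P$ and the Markov operator $T^n$ in place of $T$, furnishes the upper bound
\[
\d_{I-P}(T^n)\leq 1,\qquad \forall\, n\in\bn .
\]

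For the matching lower bound I would invoke the dichotomy established just before the Corollary. By hypothesis $T\in\Sigma_P(X)$ and $\d_P(T^{n_0})<1$ for the given $n_0$, so statement (i) of that Proposition holds. The Proposition asserts that at most one of its two alternatives can hold; therefore its alternative (ii) must fail, which is precisely to say that there is \emph{no} $m\in\bn$ with $\d_{I-P}(T^m)<1$. Equivalently, $\d_{I-P}(T^n)\geq 1$ for every $n\in\bn$. Combining this with the upper bound above yields $\d_{I-P}(T^n)=1$ for all $n$, which is the assertion.

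I expect no genuine obstacle here: the entire quantitative content is carried by the preceding Proposition, and the only supplementary ingredient is the routine remark that $I-P$ is a projection and that each $T^n$ is Markov, so that the bound $\d_{I-P}(T^n)\leq 1$ of Theorem \ref{Dbp-prp}(i) is legitimately available. The one point to verify carefully is simply that the hypotheses needed to apply the Proposition — namely $T\in\Sigma_P(X)$ — are exactly the standing assumptions of the Corollary (the commutation $PT=TP$ also gives $(I-P)T=T(I-P)$, so $T\in\Sigma_{I-P}(X)$ as well, should that be wanted), so nothing beyond the cited results is required.
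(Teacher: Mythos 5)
Your argument is correct and is exactly the route the paper intends: the Corollary is stated as an immediate consequence of the preceding Proposition, whose dichotomy rules out $\d_{I-P}(T^m)<1$ for any $m$, while Theorem \ref{Dbp-prp}(i) (applied to the projection $I-P$ and the Markov operator $T^n$) supplies the complementary bound $\d_{I-P}(T^n)\leq 1$. Your supplementary checks that $I-P$ is a projection and that each $T^n$ is Markov are the only details the paper leaves implicit, and you have handled them correctly.
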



Let us recall that a bounded linear operator $T$ on a Banach space $X$ is called \textit{quasi-compact} if
there exists an $n_0\in \bn$  such that $\|T^{n_0}-K\|<1$, for some compact operator $K$ on $X$. Quasi-compact operators have been extensively studied in \cite{HH,Lq}.

It is natural to ask: whether $T$ would be a quasi-compact in terms of $\d_P$?  Next result sheds some light on this question.

\begin{Theorem}
 Let $T\in \Sigma_P(X)$  and $TP$  be  quasi-compact on $X$. If there exists an $n_0\in \bn$ such that $\d_P(T^{n_0})<1$, then $T$ is quasi-compact.
  \end{Theorem}

\begin{proof}
The quasi-compactness of $TP$ yields the existence of $m_0\in\bn$  and a compact operator $K$ such that
\[
\|(TP)^{m_0}-K\|<1.
\]

On the other hand, the existence of $n_0\in \bn$  with $\d_P(T^{n_0})<1$,  due to Proposition \ref{Prob-1} implies
\begin{equation}\label{KK1}
\|T^n(I-P)\|=\|T^n-T^nP\|\to 0.
\end{equation}
Then, for any positive $\i$ with $0<\i<1-\|(TP)^{m_0}-K\|$, by \eqref{KK1} one finds $n_1\in\bn$ (we may assume that $n_1>m_0$) such that
$$
\|T^{n_1}-T^{n_1}P\|<\i.
$$
Let $K_1=T^{n_1-m_0}K$, which is clearly compact.
Then
\begin{eqnarray*}
\|T^{n_1}-K_1\|&\leq &\|T^{n_1}-T^{n_1}P\|+\|T^{n_1}P-K_1\|\\[2mm]
&<&\i+\|T^{n_1-m_0}(T^{m_0}-K)\|\\[2mm]
&\leq &\i+\|T^{m_0}-K\|<1,
\end{eqnarray*}
which means that $T$ is quasi-compact.
\end{proof}

From this proposition we immediately get the following one.

\begin{Corollary}
 Let $T\in \Sigma_P(X)$  and $P$  be compact on $X$. If there exists an $n_0\in \bn$ such that $\d_P(T^{n_0})<1$, then $T$ is quasi-compact.
  \end{Corollary}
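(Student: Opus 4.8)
The plan is to deduce this Corollary directly from the preceding Theorem by verifying its single nontrivial hypothesis, namely that $TP$ is quasi-compact. The other two hypotheses of that theorem, that $T\in\Sigma_P(X)$ and that there is an $n_0\in\bn$ with $\d_P(T^{n_0})<1$, are assumed here verbatim, so the entire task reduces to checking the quasi-compactness of $TP$.

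First I would observe that $P$ is compact by assumption and $T$ is a Markov operator, hence bounded with $\|T\|=1$. Since the compact operators form a two-sided ideal in the algebra of bounded linear operators on $X$, the composition of the bounded operator $T$ with the compact operator $P$ is again compact; thus $TP$ is compact. Next I would note that every compact operator is trivially quasi-compact: taking $m_0=1$ and choosing the compact operator $K:=TP$ in the definition of quasi-compactness gives $\|(TP)^{m_0}-K\|=\|TP-TP\|=0<1$. Hence $TP$ meets the definition of a quasi-compact operator.

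Having established that $TP$ is quasi-compact, all the hypotheses of the preceding Theorem are satisfied, and I would invoke that theorem directly to conclude that $T$ is quasi-compact. There is no genuine difficulty here: the only things to record are the ideal property of the compact operators (so that $TP$ is compact) and the elementary remark that compactness implies quasi-compactness. No estimate, limiting argument, or construction beyond these two observations is needed, which is precisely why the result follows \emph{immediately} from the theorem.
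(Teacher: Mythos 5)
Your proof is correct and is exactly the argument the paper intends: the paper states the corollary follows immediately from the preceding theorem, and your verification that $TP$ is compact (by the ideal property) and hence quasi-compact is the only step needed. No issues.
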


%

Let $X$ be an abstract state space. Its complexification $\tilde X$ is defined by $\tilde{X}=X+iX$ with a reasonable norm $\|\cdot\|_\bc$ (see \cite{MST} for details). In this setting,
$X$ is called the \emph{real part} of
$\tilde X$. The \emph{positive cone} of $\tilde X$ is defined as $X_+$.  A vector $f \in \tilde X$ is called \emph{positive}, which
we denote by $f \geq 0$, if $f \in X_+$.  For two elements $f,g\in \tilde X$ we write, as usual, $f \leq g$ if $g-f \geq 0$. In the dual space $\tilde X^*$ of $\tilde X$, one can
introduce an order as follows: a functional $\varphi \in \tilde X^*$ fulfils $\varphi \geq 0$ if and only
if $\langle \varphi, x\rangle \geq 0$ for all $x \in X_+$; we denote the
positive cone in $\tilde X^*$ by $\tilde X^*_+:= (\tilde X^*)_+$.
In what follows, we assume that the norm $\|\cdot\|_\bc$ is taken as
\[\|x+iy\|_\infty = \sup_{0\leq t\leq 2\pi} \|x\cos t-y\sin t\|.\]
We note that all other complexification norms on $\tilde{X}$ are equivalent to $\|\cdot\|_\infty$, and moreover, $\|\cdot\|_\infty$ is the smallest one among all reasonable norms.

A linear mapping $T:X\to X$ can be uniquely extended to $\tilde{T}:\tilde{X} \to \tilde{X}$ by $\tilde{T}(x+iy)=Tx+iTy$. The operator $\tilde{T}$ is called the \textit{extension}
of $T$ and it is well-known that $\|\tilde{T}\|=\|T\|$. In what follows, a mapping $\tilde{T}:\tilde{X}\to \tilde{X}$ is called \textit{Markov}
if it is the extension of a Markov operator $T$. Let $\tilde{P}$ be the extension of a projection $P:X\to X$, and define
\[\tilde{\d }_{\tilde{P}}(\tilde{T})=\sup_{x\in N_{\tilde{P}}}\frac{\|\tilde{T}x\|_\infty}{\|x\|_\infty},\]
where $N_{\tilde{P}}=\{x\in \tilde{X};\ \tilde{P}x=0 \}$.

\begin{Lemma}
Let $X$ be a normed space, $T:X\to X$ be an operator and let $\tilde{T}$ be its extension. Then
\[\tilde{\d }_{\tilde{P}}(\tilde{T})= \d_P(T).\]
\end{Lemma}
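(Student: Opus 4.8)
The plan is to prove the two inequalities $\d_P(T)\le\tilde{\d}_{\tilde{P}}(\tilde{T})$ and $\tilde{\d}_{\tilde{P}}(\tilde{T})\le\d_P(T)$ separately, obtaining equality. For the first, I would embed the real space into its complexification: given any nonzero $x\in N_P$, regard it as $x+i\cdot 0\in\tilde{X}$. Since $\tilde{P}(x+i\cdot 0)=Px=0$, we have $x\in N_{\tilde{P}}$. A purely real vector satisfies $\|x\|_\infty=\sup_{0\le t\le 2\pi}\|x\cos t\|=\|x\|$, and likewise $\|\tilde{T}x\|_\infty=\|Tx\|$, so the ratio $\|\tilde{T}x\|_\infty/\|x\|_\infty$ coincides with $\|Tx\|/\|x\|$. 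Taking the supremum over all nonzero $x\in N_P$ gives $\d_P(T)\le\tilde{\d}_{\tilde{P}}(\tilde{T})$.

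For the reverse inequality, the key computation is the identity $\|\tilde{T}(x+iy)\|_\infty=\sup_{0\le t\le 2\pi}\|T(x\cos t-y\sin t)\|$, which follows immediately from the linearity $\tilde{T}(x+iy)=Tx+iTy$ together with the definition of $\|\cdot\|_\infty$. The crucial observation is that if $z=x+iy\in N_{\tilde{P}}$, then $\tilde{P}z=Px+iPy=0$ forces \emph{both} $Px=0$ and $Py=0$, hence $x,y\in N_P$. Because $N_P=\ker P$ is a linear subspace, each real combination $x\cos t-y\sin t$ again lies in $N_P$, so $\|T(x\cos t-y\sin t)\|\le\d_P(T)\,\|x\cos t-y\sin t\|$ holds for every $t$ (trivially also when the argument vanishes). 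Taking the supremum over $t$, and recognizing that $\sup_t\|x\cos t-y\sin t\|=\|z\|_\infty$, yields $\|\tilde{T}z\|_\infty\le\d_P(T)\,\|z\|_\infty$. Passing to the supremum over nonzero $z\in N_{\tilde{P}}$ then gives $\tilde{\d}_{\tilde{P}}(\tilde{T})\le\d_P(T)$, and the two bounds combine to the desired equality.

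The only nontrivial point, and thus the main obstacle, sits inside the second inequality: one must recognize that the chosen complexification norm $\|\cdot\|_\infty$ is precisely what allows $\tilde{T}$ to be pulled back onto the real combinations $x\cos t-y\sin t$, which remain inside $N_P$ thanks to its being a subspace. Everything else is routine bookkeeping. I note in particular that no completeness, ordering, or additivity hypothesis on $X$ is used anywhere, which is why the statement is valid for an arbitrary normed space $X$ as claimed.
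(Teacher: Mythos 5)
Your proof is correct and follows essentially the same route as the paper: the easy inequality $\d_P(T)\le\tilde{\d}_{\tilde{P}}(\tilde{T})$ by viewing real vectors inside $\tilde{X}$, and the reverse inequality via the identity $\|\tilde{T}(x+iy)\|_\infty=\sup_t\|T(x\cos t-y\sin t)\|$ together with the observation that $x,y\in N_P$ and hence $x\cos t-y\sin t\in N_P$ for every $t$. The only cosmetic difference is that you invoke the definition of $\d_P(T)$ directly where the paper cites property (v) of Theorem \ref{Dbp-prp}, and you spell out the first inequality which the paper merely asserts.
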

\begin{proof}
As $\tilde{T}$ is the extension of $T$, $\tilde{\d }_{\tilde{P}}(\tilde{T})\geq  \d_P(T)$. On the other hand, if $\tilde{x}\in N_{\tilde{P}}\ (\tilde{x}=x+iy)$, then $Px=Py=0$, i.e. both $x$ and $y$ belong to $N_P$. Therefore,
\begin{eqnarray*}
\|\tilde{T}(x+iy)\|_\infty &=& \|Tx+iTy\|_\infty\\
&=& \sup_{0\leq t\leq 2\pi}\|T(x)\cos t -T(y)\sin t \|\\
&=& \sup_{0\leq t\leq 2\pi}\|T(x\cos t -y\sin t) \|\\
&\leq & \d_P(T)\sup_{0\leq t\leq 2\pi}\|x\cos t - y \sin t\|\ (\text{by ($v$) of Theorem \ref{Dbp-prp}})\\
&=& \d_P(T)\|x+iy\|_\infty ,
\end{eqnarray*}
hence $\tilde{\d }_{\tilde{P}}(\tilde{T})\leq  \d_P(T)$, which completes the proof.
\end{proof}

Now, let $S\in \Sigma (X)$ and let $P$ be a projection on $X$. Recall that $X=PX\oplus(I-P)X$ and so the dual $X^*=(PX)^*\oplus((I-P)X)^*$. Assume  that $\l$ is an eigenvalue of $S$, in the following we discuss the comparison between $|\l|$ and $\d_P(T)$.

\begin{Theorem}
Let $P$ be a  Markov projection on a complex space $X$ and let $S \in \Sigma_P(X)$. If one of the following conditions is satisfied:
\begin{enumerate}
\item[(i)]  $\l \neq 1$ is an eigenvalue of $S$ in $(I-\tilde{P})\tilde{X}$; or
\item[(ii)]  $\l \neq 1$ is an eigenvalue of $S^*$ in $((I-\tilde{P})\tilde{X})^*$,
\end{enumerate}
then $|\l|\leq \d_P(S)$.
\end{Theorem}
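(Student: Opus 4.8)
The plan is to reduce both cases to the elementary fact that an eigenvalue of a bounded operator, or of its Banach-space adjoint, cannot exceed the operator norm, once $\d_P(S)$ is identified with the norm of a suitable restriction. First I would record the structural observation that, since $PS=SP$, the extensions satisfy $\tilde{P}\tilde{S}=\tilde{S}\tilde{P}$, so $\tilde{S}$ leaves both $\tilde{P}\tilde{X}$ and $(I-\tilde{P})\tilde{X}$ invariant. Because $\tilde{P}$ is a projection, $N_{\tilde{P}}=\ker\tilde{P}=(I-\tilde{P})\tilde{X}$, and hence by definition
\[
\tilde{\d}_{\tilde{P}}(\tilde{S})=\sup_{x\in N_{\tilde{P}},\,x\neq 0}\frac{\|\tilde{S}x\|_\infty}{\|x\|_\infty}=\big\|\tilde{S}|_{(I-\tilde{P})\tilde{X}}\big\|,
\]
the operator norm of the restriction of $\tilde{S}$ to the invariant subspace $(I-\tilde{P})\tilde{X}$. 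By the Lemma established above (giving $\tilde{\d}_{\tilde{P}}(\tilde{T})=\d_P(T)$), this equals $\d_P(S)$, so it suffices in each case to bound $|\l|$ by $\big\|\tilde{S}|_{(I-\tilde{P})\tilde{X}}\big\|$.

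Case (i) is then immediate: an eigenvector for $\l$ lying in $(I-\tilde{P})\tilde{X}=N_{\tilde{P}}$ is a nonzero $x$ with $\tilde{S}x=\l x$, whence $|\l|=\|\tilde{S}x\|_\infty/\|x\|_\infty\leq \tilde{\d}_{\tilde{P}}(\tilde{S})=\d_P(S)$.

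For case (ii) the work lies in the duality bookkeeping. I would identify $((I-\tilde{P})\tilde{X})^*$ with the annihilator of the complementary summand, namely $(\tilde{P}\tilde{X})^{\circ}=\ker\tilde{P}^*=(I-\tilde{P}^*)\tilde{X}^*\subseteq\tilde{X}^*$; this uses that $(I-\tilde{P})\tilde{X}$ is closed (as $\tilde{P}$ is bounded) and complemented. Since $\tilde{P}\tilde{X}$ is $\tilde{S}$-invariant, its annihilator is $\tilde{S}^*$-invariant, and for $\phi$ in this annihilator and $w\in(I-\tilde{P})\tilde{X}$ one has $(\tilde{S}^*\phi)(w)=\phi(\tilde{S}w)$, which exhibits $\tilde{S}^*$ restricted to $((I-\tilde{P})\tilde{X})^*$ as exactly the adjoint $(\tilde{S}|_{(I-\tilde{P})\tilde{X}})^*$. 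An eigenvalue $\l$ of this adjoint therefore satisfies
\[
|\l|\leq \big\|(\tilde{S}|_{(I-\tilde{P})\tilde{X}})^*\big\|=\big\|\tilde{S}|_{(I-\tilde{P})\tilde{X}}\big\|=\tilde{\d}_{\tilde{P}}(\tilde{S})=\d_P(S),
\]
using $\|A^*\|=\|A\|$ for bounded operators on a Banach space.

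The one genuinely delicate point is the identification in case (ii): one must verify that, under the decomposition $\tilde{X}=\tilde{P}\tilde{X}\oplus(I-\tilde{P})\tilde{X}$ with both summands $\tilde{S}$-invariant, the dual decomposition realizes $((I-\tilde{P})\tilde{X})^*$ as the $\tilde{S}^*$-invariant annihilator of $\tilde{P}\tilde{X}$ and matches $\tilde{S}^*$ there with the adjoint of the restriction. Everything else reduces to the definitions, the established identity $\tilde{\d}_{\tilde{P}}(\tilde{S})=\d_P(S)$, and the standard inequalities between eigenvalues and norms. I note in passing that the hypothesis $\l\neq 1$ is not actually needed for the inequality itself; it merely isolates the nontrivial eigenvalues from the eigenvalue $1$ carried by $\tilde{P}\tilde{X}$.
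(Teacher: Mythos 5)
Your argument is correct and matches the paper's proof in substance: case (i) is handled identically, and in case (ii) your identification of $((I-\tilde P)\tilde X)^*$ with the annihilator of $\tilde P\tilde X$ together with the bound $|\l|\le\|(\tilde S|_{(I-\tilde P)\tilde X})^*\|=\|\tilde S|_{(I-\tilde P)\tilde X}\|$ is exactly the paper's functional $G(y)=\sup\{|y(\tilde x)|:\ \tilde x\in N_{\tilde P},\ \|\tilde x\|_\infty\le 1\}$ and its contraction estimate $G(\tilde S^*y)\le\d_P(S)G(y)$, rephrased in more structural duality language (and your version avoids the paper's harmless division by $\|\tilde S\tilde x\|_\infty$, which could vanish). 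Your closing observation that the hypothesis $\l\neq 1$ plays no role in the inequality is likewise consistent with the paper's argument.
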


\begin{proof}
If (i) is satisfied and $x\in (1-\tilde{P})\tilde{X}$ is  a corresponding eigenvector to $\l$ with $\|x\|_\infty=1$, then $x\in N_{\tilde{P}}$ and
\[|\l| =\|\l x\|_\infty=\|\tilde{S}x\|_\infty \leq \sup_{x\in N_{\tilde{P}}}\|\tilde{S}x\|_\infty  \leq \tilde{\d}_{\tilde{P}}(\tilde{S})= \d_P(S).\]
Assume that (ii) is satisfied. Notice that for $y\in \tilde{X}^*$, the set $$\{|y(\tilde{x})|;\ \tilde{x}\in N_{\tilde{P}}\ \text{and}\ \|\tilde{x}\|_\infty\leq 1\}$$ is bounded by $\|y\|$.
Let $G: \tilde{X}^*\to \br$ be defined as follows: \\
\[G(y) =\sup\{|y(\tilde{x})|;\ \tilde{x}\in N_{\tilde{P}}\ \text{and}\ \|\tilde{x}\|_\infty\leq 1\},\ y\in \tilde{X}^*.\]
Now, $\tilde{S}^*y\in \tilde{X}^*$ and
\begin{eqnarray*}
G(\tilde{S}^*y)&=& \sup\{|\tilde{S}^*y(\tilde{x})|;\  \tilde{x}\in N_{\tilde{P}}\ \text{and}\ \|\tilde{x}\|_\infty\leq 1\}\\
&=& \sup\{|y(\tilde{S}(\tilde{x}))|;\ \ \tilde{x}\in N_{\tilde{P}}\ \text{and}\ \|\tilde{x}\|_\infty\leq 1\}\\
&=& \sup\left \{\left |\|\tilde{S}(\tilde{x})\|_\infty y\left(\frac{\tilde{S}(\tilde{x})}{\|\tilde{S}(\tilde{x})\|_\infty}\right )\right |;\ \ \tilde{x}\in N_{\tilde{P}}\ \text{and}\ \|\tilde{x}\|_\infty \leq 1\right \}\\
&\leq & \tilde{\d}_{\tilde{P}}(\tilde{S})  \sup\left \{\left |y\left(\frac{\tilde{S}(\tilde{x})}{\|\tilde{S}(\tilde{x})\|_\infty}\right )\right |;\ \ \tilde{x}\in N_P\ \text{and}\ \|\tilde{x}\|_\infty \leq 1\right \}\\
&\leq & \d_P(S)  \sup\left \{|y(\tilde{v})|;\  \tilde{v}\in N_{\tilde{P}}\ \text{and}\ \|\tilde{v}\|_\infty\leq 1\right \}\ (\text{since}\ \tilde{S}(N_{\tilde{P}})\subseteq N_{\tilde{P}}) )\\
&=& \d_P(S)G(y).
\end{eqnarray*}

If $\l$ is an eigenvalue of $\tilde{S}^*$ in $((I-\tilde{P})\tilde{X})^*$, then for a corresponding eigenvector $\tilde{y}\in ((I-\tilde{P})\tilde{X})^*$ we have
\[ |\l| G(\tilde{y}) =G(\l \tilde{y})=  G(\tilde{S}^*\tilde{y})\leq \d_P(S)G(\tilde{y}).\]
As $\tilde{y}$ is a non-zero eigenvector of $\tilde{S}^*$ which belongs to $((I-\tilde{P})\tilde{X})^*$, there exists $x_0\in (1-\tilde{P})\tilde{X}$ (consequently $x_0\in N_{\tilde{P}}$) such that $\tilde{y}(x_0)\neq 0$. Then we get $G(\tilde{y}) \neq 0$
and hence the proof is completed.
\end{proof}

\begin{remark} We notice that there are many works devoted to the spectral properties of Markov operators (see for example, \cite{A,G}). One of them is its spectral gap. Namely, we say that a Markov operator $T$  on $X$ (here $X$ is a complex abstract state space) has
a \textit{spectral gap}, if one has $\|T(I-P)\|<1$, where $P$ is a Markov projection such that $PT=TP=P$. This is clearly equivalent to $\d_P(T)<1$.  When $X$ is taken as a non-commutative $L_p$-spaces, the spectral gap of Markov operator has been
recently studied in \cite{CPR}. In the classical setting, this gap has been extensively investigated by many authors (see for example, \cite{KM}).

We can stress that if $T$ has a spectral gap, then 1 has to be an isolated point of the spectrum. Indeed, choose an arbitrary $\i>0$ with $\i<1-\d_P(T)$. Assume that $\l$ is an element of the spectrum of $T$ such that $|1-\l|<\i$ with
corresponding eigenvector $x$. Then, it is clear that $y=x-Px$ belongs to $N_P$, therefore, one gets
\begin{eqnarray*}
Ty=Tx-TPx=Tx-PTx=\l(x-Px)=\l y
\end{eqnarray*}
hence, $y$ is an eigenvector with eigenvalue of $\l$, and we have
$$
\|Ty\|=|\l|\|y\|>\d_p(T)\|y\|,
$$
which contradicts to $\d_P(T)<1$.

Going further, we just emphasize that if $T$ has a spectral gap, then one has $\|T^n-P\|\to 0$, which is called as a \textit{uniform $P$-ergodicity}. Next sections will be devoted to this notion.
\end{remark}
\section{Uniformly $P$-ergodic Operators}
In this section, we study uniform $P$-ergodicities of Markov operators on abstract state spaces.

\begin{Definition}
Let $P$ be a projection on $X$. A bounded operator $T:X\to X$ is called uniformly $P$-ergodic if $\|T^n-P\| \to 0$, as $n\to \infty$.
\end{Definition}

\noindent Let us prove the following results for uniform $P$-ergodicity.

\begin{Proposition}
Let $P$ and $Q$ be two  projection operators on $X$  with $Q\leq P$ and let $T\in \Sigma_Q(X)$. If $T$ is uniformly $P$-ergodic, then $TQ$ is uniformly $Q$-ergodic.
\end{Proposition}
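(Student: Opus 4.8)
The plan is to reduce everything to the uniform $P$-ergodicity of $T$, exploiting that $T$ commutes with $Q$ and that $Q\le P$ forces $PQ=Q$. First I would record the simple algebraic identity $(TQ)^n=T^nQ$ for all $n\in\bn$. This follows by induction: since $T\in\Sigma_Q(X)$ we have $TQ=QT$, and since $Q$ is a projection $Q^2=Q$, so
\[(TQ)^{n+1}=(TQ)^n(TQ)=T^nQ\,TQ=T^n(QT)Q=T^{n+1}Q^2=T^{n+1}Q.\]

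Next I would use the hypothesis $Q\le P$, which by definition means $PQ=QP=Q$; in particular $Q=PQ$. Combining this with the previous identity gives, for every $n$,
\[(TQ)^n-Q=T^nQ-PQ=(T^n-P)Q,\]
whence, by submultiplicativity of the operator norm,
\[\|(TQ)^n-Q\|=\|(T^n-P)Q\|\le\|T^n-P\|\,\|Q\|.\]
Since $T$ is uniformly $P$-ergodic we have $\|T^n-P\|\to 0$, while $\|Q\|$ is a fixed finite constant; hence the right-hand side tends to $0$. Therefore $\|(TQ)^n-Q\|\to 0$, which is precisely the statement that $TQ$ is uniformly $Q$-ergodic.

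There is essentially no serious obstacle in this argument: the whole proof rests on the commutation $TQ=QT$ (which yields $(TQ)^n=T^nQ$) and on the single relation $PQ=Q$ coming from $Q\le P$, which is exactly what lets one replace the limiting projection $P$ by $Q$ inside the norm estimate. The only routine point to verify carefully is the inductive identity for $(TQ)^n$, and that is immediate from idempotency of $Q$ together with the commutation hypothesis.
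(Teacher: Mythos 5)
Your proof is correct and follows essentially the same route as the paper: both rest on the commutation $TQ=QT$ to get $(TQ)^n=T^nQ=QT^n$ and on the relation $PQ=QP=Q$ from $Q\le P$ to replace $P$ by $Q$ in the limit. Your version is slightly more explicit about the norm estimate $\|(T^n-P)Q\|\le\|T^n-P\|\,\|Q\|$, but the underlying argument is identical.
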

\begin{proof}
Suppose that $T$ is uniformly $P$-ergodic. Then $T^n\to P$ as $n\to \infty$, therefore we have
$(TQ)^n=QT^n\to QP=Q$, which proves the statement.
\end{proof}

\begin{Proposition}\label{PTisTPisP}
If $T$ is uniformly $P$-ergodic operator on $X$, then $TP=PT=P$, and in addition, if $T\in \Sigma(X)$, then $P\in \Sigma(X)$.
\end{Proposition}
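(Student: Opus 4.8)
The plan is to read off the identities $TP=PT=P$ directly from the norm-convergence $\|T^n-P\|\to 0$, and then to deduce the Markov property of $P$ by passing to the limit, using the closedness of $X_+$ and of the base $\ck$.

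First I would note that the shifted sequence still converges to the same limit: $\|T^{n+1}-P\|\to 0$ as $n\to\infty$, since $\{\|T^{n+1}-P\|\}_n$ is just $\{\|T^m-P\|\}_m$ with the first term dropped. On the other hand, writing $T^{n+1}=T\,T^n$ and using that $T$ is bounded, $\|T\,T^n-TP\|\le\|T\|\,\|T^n-P\|\to 0$, so $T^{n+1}\to TP$ in operator norm. Uniqueness of the limit in the Banach algebra of bounded operators then forces $TP=P$. Symmetrically, from $T^{n+1}=T^n\,T$ and $\|T^n T-PT\|\le\|T^n-P\|\,\|T\|\to 0$ one gets $T^{n+1}\to PT$, whence $PT=P$. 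This proves the first assertion $TP=PT=P$.

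For the second assertion, assume in addition $T\in\Sigma(X)$. Since a composition of Markov operators is again Markov, every power $T^n$ is Markov; in particular each $T^n$ is positive and satisfies $T^n(\ck)\subseteq\ck$. Now fix $x\in X_+$. Then $T^nx\in X_+$ for every $n\in\bn$, and $\|T^nx-Px\|\le\|T^n-P\|\,\|x\|\to 0$, so $T^nx\to Px$ in the vector norm. As $X_+$ is closed, $Px\in X_+$, hence $P$ is positive. Likewise, for $x\in\ck$ we have $T^nx\in\ck$ and $T^nx\to Px$; since $\ck$ is a closed face of the unit ball of $X$ (in particular closed), it follows that $Px\in\ck$. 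Therefore $P(\ck)\subseteq\ck$, so $P$ is Markov, i.e. $P\in\Sigma(X)$.

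I do not anticipate a serious obstacle: the argument is a routine combination of the norm-continuity of multiplication by the fixed bounded operator $T$ and the closedness of $X_+$ and $\ck$. The only points to keep straight are that the convergences in the first part are in the operator norm while those in the second are in the vector norm, and that it is precisely the closedness of both the cone $X_+$ and the base $\ck$—the defining structural features of an abstract state space—that allow the limit $P$ to inherit positivity and the property $P(\ck)\subseteq\ck$.
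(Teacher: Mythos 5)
Your proof is correct and follows essentially the same route as the paper: the identities $TP=PT=P$ are obtained exactly as in the paper by computing the limit of $T^{n+1}=TT^n=T^nT$ in operator norm, and the Markovianity of $P$ is obtained by passing to the limit in $T^nx\to Px$. Your treatment of the second part is in fact slightly more complete than the paper's, which only verifies $f(Px)=\lim_{n}f(T^nx)=1$ and leaves the positivity of $Px$ (via closedness of $X_+$) implicit, whereas you check both positivity and $P(\ck)\subseteq\ck$ explicitly.
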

\begin{proof}
Assume that $T$ is uniformly $P$-ergodic. Then
\[T^{n+1}= TT^n\to TP,\] similarly
\[T^{n+1}= T^nT\to PT,\]
so $PT=TP=P$.

As $T\in \Sigma(X)$, $T^n\in \Sigma(X)$, for all $n\in \bn$.  Therefore, for every $x\in \ck$, one has
$f(Px)=\lim\limits_{n\to \infty}f(T^nx)=1$,  hence $P\in \Sigma(X)$.
\end{proof}

Consequently, in the case of strong abstract state spaces, we deduce the following result.
\begin{Corollary}
Let $(X,X_+,\ck,f)$ be a strong abstract state space, $P$ be a projection on $X$ and let $T\in \Sigma(X)$. If $T$ is uniformly $P$-ergodic and $\d_P(T)=0$, then $T=P$.
\end{Corollary}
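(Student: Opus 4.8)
The plan is to combine the two hypotheses directly. We are in a strong abstract state space, $P$ is a projection, $T \in \Sigma(X)$ is uniformly $P$-ergodic, and $\d_P(T)=0$; we want $T=P$. First I would invoke Proposition \ref{PTisTPisP}: since $T$ is uniformly $P$-ergodic, we automatically get $TP=PT=P$ and, because $T\in\Sigma(X)$, that $P\in\Sigma(X)$. This is the crucial upgrade, because it tells us $P$ is a \emph{Markov} projection, which is exactly the hypothesis needed to apply the earlier machinery developed for $\d_P$ on strong abstract state spaces.

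Next I would appeal to Corollary \ref{dpis0}. That corollary states precisely that on a strong abstract state space, for a Markov projection $P$ and $T\in\Sigma(X)$, the condition $\d_P(T)=0$ forces $T=TP$. Since we have just established that $P$ is a Markov projection (from the uniform $P$-ergodicity plus $T\in\Sigma(X)$), the hypotheses of Corollary \ref{dpis0} are met, and we conclude $T=TP$.

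Finally I would close the loop: Proposition \ref{PTisTPisP} already gives $TP=P$, so chaining the two equalities yields $T=TP=P$, which is the desired conclusion.

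The proof is essentially a two-line assembly of earlier results, so there is no serious obstacle; the only thing to be careful about is the logical order. One must first extract that $P$ is Markov (via Proposition \ref{PTisTPisP}) \emph{before} invoking Corollary \ref{dpis0}, since that corollary explicitly requires $P$ to be a Markov projection. The hypothesis $\d_P(T)=0$ alone does not give this — it is the uniform $P$-ergodicity together with $T\in\Sigma(X)$ that supplies the Markovianity of $P$, so the two hypotheses are genuinely both needed and must be used in the right sequence.
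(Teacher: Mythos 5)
Your proof is correct and follows exactly the paper's own argument: the paper also derives $T=P$ by combining Proposition \ref{PTisTPisP} (which gives $TP=P$ and the Markovianity of $P$) with Corollary \ref{dpis0} (which gives $T=TP$). Your additional remark about the logical order --- establishing that $P$ is Markov before invoking Corollary \ref{dpis0} --- is a valid and worthwhile clarification of the paper's terse ``directly follows'' proof.
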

\begin{proof}
Directly follows by combining the previous proposition and Theorem \ref{dpis0}.
\end{proof}

\begin{Proposition}\label{Prob-2} Let $(X,X_+,\ck,f)$ be an abstract state space (i.e. $\l$-generating).
If $T$ is uniformly $P$-ergodic, then there exists an $n_0\in \bn$ such that $\d_P(T^{n_0})<1$.
\end{Proposition}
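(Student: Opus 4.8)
The plan is to read off the conclusion directly from the definition of uniform $P$-ergodicity together with the defining relation of $N_P$. By hypothesis $\|T^n-P\|\to 0$ as $n\to\infty$, so I would first fix an integer $n_0\in\bn$ for which $\|T^{n_0}-P\|<1$; such an $n_0$ exists precisely because the sequence $\|T^n-P\|$ tends to $0$.

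The key observation is that on the subspace $N_P$ the iterate $T^{n_0}$ coincides with $T^{n_0}-P$. Indeed, every $x\in N_P$ satisfies $Px=0$, whence
$$T^{n_0}x=(T^{n_0}-P)x,$$
and therefore
$$\|T^{n_0}x\|=\|(T^{n_0}-P)x\|\leq\|T^{n_0}-P\|\,\|x\|.$$

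Taking the supremum over all $x\in N_P$ with $x\neq 0$ and invoking the definition \eqref{Dbp} of the generalized Dobrushin coefficient, I obtain
$$\d_P(T^{n_0})=\sup_{x\in N_P,\ x\neq 0}\frac{\|T^{n_0}x\|}{\|x\|}\leq\|T^{n_0}-P\|<1,$$
which is exactly the required statement.

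I do not expect any genuine obstacle here: the argument is essentially the trivial estimate $\d_P(S)\leq\|S-P\|$ applied to $S=T^{n_0}$, and it uses neither the $\l$-generating hypothesis nor the Markovianity of $T$ — only the norm convergence $\|T^n-P\|\to 0$ and the fact that $P$ annihilates $N_P$. The one point worth stating carefully is the cancellation $T^{n_0}x=(T^{n_0}-P)x$ for $x\in N_P$, since it is this identity that converts operator-norm convergence into the desired bound on $\d_P$.
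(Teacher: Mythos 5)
Your proof is correct, but it takes a genuinely different (and shorter) route than the paper's. The paper first chooses $n_0$ with $\|T^{n_0}-P\|<\tfrac{1}{2\l}$ and then invokes part (iii) of Theorem \ref{Dbp-prp} (which rests on Lemma \ref{Lem2} and hence on the $\l$-generating property and the Markovianity of $P$) to bound $\d_P(T^{n_0})\leq \tfrac{\l}{2}\sup\|T^{n_0}u-T^{n_0}v\|\leq \l\|T^{n_0}-P\|<1$. You instead observe that $P$ annihilates $N_P$, so $T^{n_0}x=(T^{n_0}-P)x$ for every $x\in N_P$, giving the sharper and entirely elementary estimate $\d_P(T^{n_0})\leq\|T^{n_0}-P\|$; any $n_0$ with $\|T^{n_0}-P\|<1$ then suffices. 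Your argument needs neither the $\l$-generating hypothesis, nor that $P$ or $T$ be Markov, and it yields a better constant, so it is strictly more general than the paper's proof; what the paper's detour through Theorem \ref{Dbp-prp}(iii) buys is only consistency with the machinery it reuses elsewhere. The single caveat, common to both arguments, is the degenerate convention $\d_I(T)=1$ when $P=I$, which is implicitly excluded since $P$ is assumed to be a non-trivial projection.
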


\begin{proof}
The uniformly $P$-ergodicity of $T$ implies the existence of an $n_0\in \bn$ such that
\[\|T^{n_0}-P\|<\frac{1}{2\l}.\]
By (iii) of Theorem \ref{Dbp-prp}, we have
\begin{eqnarray*}
\d_P(T^{n_0}) &\leq& \frac{\l}{2}\sup \|T^{n_0}u-T^{n_0}v\| \ \ \ (\ u,v\in \ck,\ \text{and}\ Pu=Pv)\\
&=& \frac{\l}{2}\sup \|T^{n_0}u-Pu+Pv - T^{n_0}v\|\\
&\leq & \frac{\l}{2}( \sup \|T^{n_0}u-Pu\|+ \sup \|T^{n_0}v-Pv \|)\\
&\leq & \frac{\l}{2}( \|T^{n_0}-P\|+  \|T^{n_0}-P \|)\\
&< &  1,
\end{eqnarray*}
which is the desired assertion.
\end{proof}

Conversely, we have the following theorem:

\begin{Theorem}\label{Thr-2}
Let $T\in \Sigma_P(X)$ be such that $TP=P$. If there exists an $n_0\in \bn$ such that $\d_P(T^{n_0})<1$, then $T$ is uniformly $P$-ergodic.
\end{Theorem}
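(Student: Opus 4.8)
The plan is to show that the uniform $P$-ergodicity, namely $\|T^n-P\|\to 0$, follows by combining Proposition \ref{Prob-1} with the hypotheses $TP=P$ and $PT=P$. The crucial preliminary observation is that, since $T\in\Sigma_P(X)$ we have $PT=TP$, and together with $TP=P$ this forces $PT=TP=P$; in particular $P$ is idempotent and commutes with $T$. Given this, the natural strategy is to estimate $\|T^n-P\|$ by decomposing $T^n$ across the splitting $X=PX\oplus(I-P)X$ and to recognize $T^n-P$ as essentially $T^n(I-P)$.

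First I would establish the algebraic identity $T^n-P=T^n(I-P)$ for all $n\in\bn$. Using $PT=TP=P$ one checks by an easy induction that $T^nP=P$: indeed $T^nP=T^{n-1}(TP)=T^{n-1}P$, collapsing down to $P$. Hence
\[
T^n(I-P)=T^n-T^nP=T^n-P,
\]
so controlling $\|T^n-P\|$ is exactly the same as controlling $\|T^n(I-P)\|$.

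Next I would invoke Proposition \ref{Prob-1}. Its hypotheses are precisely $T\in\Sigma_P(X)$ and the existence of some $n_0\in\bn$ with $\d_P(T^{n_0})<1$, both of which are assumed here. That proposition concludes $\|T^n(I-P)\|\to 0$ as $n\to\infty$. Combining this with the identity of the previous step yields
\[
\|T^n-P\|=\|T^n(I-P)\|\to 0,
\]
which is exactly the statement that $T$ is uniformly $P$-ergodic.

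The main obstacle, such as it is, lies entirely in the opening step: one must be careful to extract $PT=P$ from the two given facts $PT=TP$ (built into the definition of $\Sigma_P(X)$) and $TP=P$, and then to run the induction $T^nP=P$ cleanly. Once $T^nP=P$ is in hand, the rest is a direct appeal to Proposition \ref{Prob-1}, and no new estimate is required. I would take care to state explicitly that the hypothesis $TP=P$ is what upgrades the mere commutation $PT=TP$ into the full absorption $PT=TP=P$, since that is the hinge on which the identity $T^n-P=T^n(I-P)$ turns.
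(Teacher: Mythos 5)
Your proposal is correct and follows essentially the same route as the paper: both reduce $\|T^n-P\|$ to $\|T^n(I-P)\|$ via the identity $T^nP=P$ (which already follows from $TP=P$ alone by induction, without needing the commutation $PT=TP$) and then apply Proposition \ref{Prob-1}. The only difference is that you spell out the induction explicitly, whereas the paper uses it implicitly in the line $\|T^n-P\|=\|T^n-T^nP\|$.
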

\begin{proof}
Assume that there exists an $n_0\in \bn$ such that $\d_P(T^{n_0})<1$. By Proposition \ref{Prob-1}
\[\|T^{n}(I-P)\|\to 0,\ \text{as}\ n\to \infty.\]
Therefore,
\[\|T^n-P\|=\|T^n-T^nP\|= \|T^n(I-P)\| \to 0,\]
hence $T$ is uniformly $P$-ergodic.

\end{proof}

\begin{Corollary}\label{unif-p-charac}
Let $T\in \Sigma_P(X)$. Then $T$ is uniformly $P$-ergodic if and only if
\[TP=P\ \text{and} \ \exists n_0\in \bn\ \text{such that} \ \d_P(T^{n_0})<1.\]
  Moreover,
there are constants $C, \a \in \br_+$ and $n_0 \in\bn$ 
such that
$$
\left\| T^n - P \right\| \leq C e^{-\a n}, \,\,\,\,\,
\forall n\geq n_0.
$$
\end{Corollary}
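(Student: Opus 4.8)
The plan is to obtain the equivalence immediately by assembling three results already proved, and then to read off the exponential rate by keeping track of the constants in the argument of Proposition~\ref{Prob-1}.

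For the characterization I would argue both implications. If $T$ is uniformly $P$-ergodic, then Proposition~\ref{PTisTPisP} gives $TP=PT=P$ (in particular $TP=P$), and Proposition~\ref{Prob-2} produces an $n_0\in\bn$ with $\d_P(T^{n_0})<1$. Conversely, if $TP=P$ and $\d_P(T^{n_0})<1$ for some $n_0\in\bn$, then Theorem~\ref{Thr-2} yields uniform $P$-ergodicity at once. This settles the ``if and only if''.

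For the rate, fix $n_0$ with $\d_P(T^{n_0})<1$ and choose $\rho$ with $\d_P(T^{n_0})\le\rho<1$ and $\rho>0$; insisting on $\rho>0$ merely avoids a logarithm of zero in the last step. Since $TP=P$ forces $T^nP=P$ for all $n$, we have $T^n-P=T^n(I-P)$. Writing $n=kn_0+r$ with $r<n_0$ and using parts (vi) and (i) of Theorem~\ref{Dbp-prp} exactly as in the proof of Proposition~\ref{Prob-1}, one gets $\d_P(T^n)\le\rho^k\d_P(T^r)\le\rho^{\lfloor n/n_0\rfloor}$; then part (v) with $H=I-P$ (note $P(I-P)=0$) gives
\[
\|T^n-P\|=\|T^n(I-P)\|\le\|I-P\|\,\rho^{\lfloor n/n_0\rfloor}.
\]

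Finally I would convert this geometric bound into the stated exponential form. Using $\lfloor n/n_0\rfloor\ge n/n_0-1$ together with $0<\rho<1$,
\[
\rho^{\lfloor n/n_0\rfloor}\le\rho^{\,n/n_0-1}=\frac{1}{\rho}\,e^{-\a n},\qquad \a:=\frac{1}{n_0}\ln\frac{1}{\rho}>0,
\]
so the claim holds for all $n\ge n_0$ with $C=\|I-P\|/\rho$, which is finite since the projection $P$ is bounded. The only mildly delicate points are the bookkeeping with the floor function and the exclusion of $\rho=0$ from the logarithm; the substance lies entirely in Theorem~\ref{Dbp-prp} and in the estimate already carried out in Proposition~\ref{Prob-1}, so there is no real obstacle here — the corollary simply packages these facts and extracts the decay rate.
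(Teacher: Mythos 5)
Your proposal is correct and follows essentially the same route the paper intends: the equivalence is exactly the assembly of Proposition~\ref{PTisTPisP}, Proposition~\ref{Prob-2}, and Theorem~\ref{Thr-2}, and the exponential rate is read off from the geometric bound $\|T^n(I-P)\|\le \|I-P\|\,\rho^{\lfloor n/n_0\rfloor}$ established in the proof of Proposition~\ref{Prob-1}. Your explicit constants $\a=\frac{1}{n_0}\ln\frac{1}{\rho}$ and $C=\|I-P\|/\rho$, together with the care taken to exclude $\rho=0$, correctly fill in the details the paper leaves implicit.
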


Now, we would like to provide an application of the deduced results above to the case of linear operators which are defined on arbitrary Banach spaces.

\begin{Theorem}\label{TTU}
Let $X$ be any Banach space over $\br$. Assume that $T : X\to X$ is a linear bounded operator with $\|T\|\leq 1$ and $P : X\to X$ is a projection operator with $T P = P T =P$.
Then the following statements are equivalent:
\begin{enumerate}
 \item[(i)] $T$ is uniformly $P$-ergodic;\\
 \item[(ii)]  there is an $n_0\in\bn$ such that $\| T^{n_0}_{|_{I-P}}\|<1$,
 where $T_{|_{I-P}}$ denotes the restriction of $T$ to the subspace $(I-P)(X)$.
 \end{enumerate}
\end{Theorem}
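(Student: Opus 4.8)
The plan is to embed the abstract Banach space $X$ into an abstract state space and transfer the problem to the framework already developed. Following Example \ref{E1} (c) and Example \ref{E2} (4), I would set $\mathcal{X}=\br\oplus X$ with the norm $\|(\a,x)\|=\max\{|\a|,\|x\|\}$, the cone $\mathcal{X}_+=\{(\a,x):\ \|x\|\leq\a\}$, and the functional $f(\a,x)=\a$; this makes $(\mathcal{X},\mathcal{X}_+,\ck,f)$ an abstract state space. Since $\|T\|\leq 1$, the operator $\mathcal{T}(\a,x)=(\a,Tx)$ is Markov on $\mathcal{X}$. The key point is to produce a companion Markov projection on $\mathcal{X}$: I would define $\mathcal{P}(\a,x)=(\a,Px)$ and check that $\mathcal{P}^2=\mathcal{P}$, that $f(\mathcal{P}(\a,x))=\a=f(\a,x)$ so $\mathcal{P}\in\Sigma(\mathcal{X})$ (a Markov projection), and that $\mathcal{P}\mathcal{T}=\mathcal{T}\mathcal{P}$ together with $\mathcal{T}\mathcal{P}=\mathcal{P}$, using the hypotheses $TP=PT=P$. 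Thus $\mathcal{T}\in\Sigma_{\mathcal{P}}(\mathcal{X})$ with $\mathcal{T}\mathcal{P}=\mathcal{P}$, which is exactly the setting of Corollary \ref{unif-p-charac} and Theorem \ref{Thr-2}.

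Next I would relate uniform ergodicity and the norm data on $\mathcal{X}$ back to $X$. Because $(\mathcal{T}^n-\mathcal{P})(\a,x)=(0,(T^n-P)x)$ and the embedding $x\mapsto(0,x)$ is isometric (here $\|(0,x)\|=\|x\|$), one gets $\|\mathcal{T}^n-\mathcal{P}\|=\|T^n-P\|$ directly from the definition of the $\max$-norm. Hence $T$ is uniformly $P$-ergodic on $X$ if and only if $\mathcal{T}$ is uniformly $\mathcal{P}$-ergodic on $\mathcal{X}$. The remaining task is to translate the coefficient condition $\d_{\mathcal{P}}(\mathcal{T}^{n_0})<1$ into the stated restriction-norm condition $\|T^{n_0}_{|_{I-P}}\|<1$. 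For this I note that $N_{\mathcal{P}}=\{(\a,x):\ (\a,Px)=0\}=\{(0,x):\ Px=0\}=\{0\}\oplus(I-P)(X)$, so that
\[
\d_{\mathcal{P}}(\mathcal{T}^{n_0})=\sup_{x\in(I-P)(X),\ x\neq 0}\frac{\|T^{n_0}x\|}{\|x\|}=\|T^{n_0}_{|_{I-P}}\|,
\]
since $T^{n_0}$ leaves $(I-P)(X)$ invariant (because $TP=PT$). This identification is the crux of the argument.

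With these identities in hand the equivalence is immediate. For (i) $\Rightarrow$ (ii): if $T$ is uniformly $P$-ergodic, then $\mathcal{T}$ is uniformly $\mathcal{P}$-ergodic, so by Proposition \ref{Prob-2} there is $n_0$ with $\d_{\mathcal{P}}(\mathcal{T}^{n_0})<1$, which by the displayed identity reads $\|T^{n_0}_{|_{I-P}}\|<1$. Conversely, for (ii) $\Rightarrow$ (i): the hypothesis gives $\d_{\mathcal{P}}(\mathcal{T}^{n_0})<1$, and since $\mathcal{T}\in\Sigma_{\mathcal{P}}(\mathcal{X})$ with $\mathcal{T}\mathcal{P}=\mathcal{P}$, Theorem \ref{Thr-2} yields uniform $\mathcal{P}$-ergodicity of $\mathcal{T}$, hence of $T$.

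I expect the main obstacle to be purely verificational rather than conceptual: one must be careful that $\mathcal{P}$ is genuinely a Markov projection and that the commutation relations survive the lift, and one must confirm the isometric nature of the relevant norm comparisons on $\mathcal{X}$ (in particular that the $\max$-norm does not distort $\|\mathcal{T}^n-\mathcal{P}\|$ away from $\|T^n-P\|$). The condition $\|T\|\leq 1$ is essential precisely to guarantee Markovianity of $\mathcal{T}$; without it the whole transfer mechanism, and hence the applicability of the earlier results, would break down.
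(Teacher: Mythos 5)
Your proposal is correct and follows essentially the same route as the paper: embed $X$ into the abstract state space $\mathcal{X}=\br\oplus X$, lift $T$ and $P$ to the Markov operators $\mathcal{T}$ and $\mathcal{P}$, identify $N_{\mathcal{P}}$ with $\{0\}\oplus(I-P)(X)$ so that $\d_{\mathcal{P}}(\mathcal{T}^{n_0})=\|T^{n_0}_{|_{I-P}}\|$, and invoke Theorem \ref{Thr-2}. The only cosmetic difference is that you route (i)$\Rightarrow$(ii) through Proposition \ref{Prob-2} on the lifted space, whereas the paper treats that direction as immediate (since $T^n-P=T^n(I-P)$ dominates the restricted norm); both are fine.
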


\begin{proof} The implication (i)$\Rightarrow$(ii) is obvious.
Let us prove (ii)$\Rightarrow$(i).
First consider the abstract state space $(\mathcal{X},\mathcal{X}_+,\ck,f)$ which was introduced in Example \ref{E1}-c.
Define the operators $\mathcal{T},\mathcal{P}: \mathcal{X} \to \mathcal{X}$, respectively by
 $$\mathcal{T}(\a , x)=(\a , Tx), \ \  \mathcal{P}(\a , x)=(\a ,  Px).
 $$
 It is clear that $\mathcal{T}$ and $\mathcal{P}$ are Markov operators.
  To prove that $\mathcal{T}$ is uniformly $\mathcal{P}$-ergodic, first we notice that
\[N_{\mathcal{P}}= \{(\a , x)\in \mathcal{X}: \ \mathcal{P}(\a , x)=0 \}=\{(0,x):\ x\in \text{ker}(P) \}.\]
Therefore,
\begin{eqnarray*}
\d_{\mathcal{P}}(\mathcal{T}) &=& \sup \{\|\mathcal{T}(\a , x)\|;\  \|(\a , x)\|\leq 1 \ \text{and}\ (\a , x) \in N_{\mathcal{P}} \}\\
&=& \sup \{\|(0 , Tx)\|;\  \| x\|\leq 1 \ \text{and}\ x \in \text{ker}(P) \}\\
&=& \sup \{\| Tx\|;\  \| x\|\leq 1 \ \text{and}\ x \in (1-P)X\}\\
&=& \|T_{|_{I-P}}\|.
\end{eqnarray*}
Hence, from the condition we infer that $\d_{\mathcal{P}}(\mathcal{T}^{n_0})<1$, then Theorem \ref{Thr-2} implies  $\mathcal{T}$ is uniformly $\mathcal{P}$-ergodic. Using the definition of the norm on $\mathcal{X}$, we obtain the required
assertion.
\end{proof}

 \begin{remark}
A similar kind of result has been proved in \cite{Jach}. An advantage of our approach is that we are working only with $\d_P$, which will allow us to establish some category results for uniformly $P$-ergodic operators
 (see Theorem \ref{unifdensity}).
\end{remark}

We now define a weaker condition than uniform $P$-ergodicity. Namely,  a bounded linear operator $T:X\to X$ is called \textit{weakly $P$-ergodic} if
\[\d_P(T^n)\to 0,\ \text{as}\ n\to \infty .\]

The following result characterizes the concept of weak $P$-ergodicity of $T$.

\begin{Proposition}
Let $T\in\Sigma_P(X)$. Then the following conditions are equivalent:
\begin{enumerate}
 \item[(i)] $T$ is weakly $P$-ergodic;
  \item[(ii)] there exists an $n_0\in \bn$ such that $\d_P(T^{n_0})<1$.
\end{enumerate}
\end{Proposition}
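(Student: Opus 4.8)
The plan is to prove the two implications separately, with (i)$\Rightarrow$(ii) being immediate and (ii)$\Rightarrow$(i) carrying the content. The whole argument rests on the submultiplicativity property established in Theorem \ref{Dbp-prp}(vi), together with the normalization $0\leq \d_P(\cdot)\leq 1$ from Theorem \ref{Dbp-prp}(i); in fact the computation is essentially the one already performed inside the proof of Proposition \ref{Prob-1}.

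For (i)$\Rightarrow$(ii): by definition, weak $P$-ergodicity means $\d_P(T^n)\to 0$ as $n\to\infty$. Hence there exists $n_0\in\bn$ with $\d_P(T^{n_0})<1$ (indeed $\d_P(T^n)<1$ for all sufficiently large $n$), which is precisely (ii).

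For (ii)$\Rightarrow$(i): first I would record that since $T\in\Sigma_P(X)$, every power $T^k$ is again Markov and commutes with $P$, so $T^k\in\Sigma_P(X)$ for all $k\in\bn$; in particular $T^{n_0}\in\Sigma_P(X)$, which is what is needed to invoke (vi). Setting $\rho=\d_P(T^{n_0})<1$, an induction using (vi) with $S=T^{n_0}$ gives $\d_P\bigl((T^{n_0})^k\bigr)\leq \rho^k$ for every $k$. Then, writing an arbitrary $n$ as $n=kn_0+r$ with $0\leq r<n_0$, one factors $T^n=T^{kn_0}T^r$ and applies (vi) once more, together with $\d_P(T^r)\leq 1$ from Theorem \ref{Dbp-prp}(i), to obtain
\[
\d_P(T^n)=\d_P\bigl(T^{kn_0}T^r\bigr)\leq \d_P\bigl((T^{n_0})^k\bigr)\,\d_P(T^r)\leq \rho^{\lfloor n/n_0\rfloor}.
\]
Since $\rho<1$, the right-hand side tends to $0$ as $n\to\infty$, so $\d_P(T^n)\to 0$ and $T$ is weakly $P$-ergodic.

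I do not anticipate a genuine obstacle here: the only point requiring care is verifying that the powers $T^{n_0}$ and $T^r$ lie in $\Sigma_P(X)$ (respectively satisfy $\d_P(T^r)\leq 1$) so that parts (i) and (vi) of Theorem \ref{Dbp-prp} legitimately apply. Once the division-with-remainder decomposition is set up, the geometric decay is automatic, and the same estimate simultaneously yields the exponential rate $\d_P(T^n)\leq \rho^{\lfloor n/n_0\rfloor}$.
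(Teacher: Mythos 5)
Your proof is correct and follows essentially the same route as the paper's: both reduce (ii)$\Rightarrow$(i) to the division $n=kn_0+r$ and the submultiplicativity $\d_P(T^{kn_0}T^r)\leq \rho^k\,\d_P(T^r)$ from Theorem \ref{Dbp-prp}(vi), exactly as in Proposition \ref{Prob-1}. Your version merely makes explicit the (correct) bookkeeping that all powers of $T$ remain in $\Sigma_P(X)$ and that $\d_P(T^r)\leq 1$.
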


\begin{proof} (i) $\Rightarrow$ (ii) If $T$ is weakly $P$-ergodic, then it is obvious that there exists $n_0\in \bn$ such that $\d_P(T^{n_0})<1$.

(ii) $\Rightarrow$ (i) Assume that such an $n_0\in \bn$ exists and let $\rho =\d_P(T^{n_0})$. Then for a large $n\in \bn$, we write $n=kn_0+r\ (k,r\in \bn\  \text{and}\ r<n_0)$ and by (vi) of
Theorem \ref{Dbp-prp}, we have
$$\d_P(T^n)= \d_P(T^{kn_0}T^r) \leq \rho^k \d_P(T^r).$$
As $n$ tends to 0, $k$ also tends to 0, and hence the proof is completed.
\end{proof}

Using Corollary \ref{wpergnotcomp}, we immediately get the following fact.

\begin{Proposition}
Let $T\in \Sigma_P(X)$. If $T$ is weakly $P$-ergodic, then $T$ is not weakly $(1-P)$-ergodic.
\end{Proposition}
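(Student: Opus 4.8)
The plan is to reduce everything to the two facts already in hand: the characterization of weak $P$-ergodicity proved in the preceding proposition, and Corollary \ref{wpergnotcomp}. First I would observe that the hypothesis $T\in\Sigma_P(X)$, i.e. $PT=TP$, immediately gives $(I-P)T=T(I-P)$, so $T\in\Sigma_{I-P}(X)$ as well; this guarantees that the notion of weak $(I-P)$-ergodicity makes sense for $T$ and that the characterization proposition applies with $I-P$ in place of $P$.

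Next I would invoke the characterization of weak ergodicity. Since $T$ is assumed weakly $P$-ergodic, condition (ii) of that proposition supplies an $n_0\in\bn$ with $\d_P(T^{n_0})<1$. This is the single piece of information I need to feed into the corollary.

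Now I would apply Corollary \ref{wpergnotcomp} directly: with $T\in\Sigma_P(X)$ and $\d_P(T^{n_0})<1$, the corollary yields
\[
\d_{I-P}(T^n)=1\quad\text{for all }n\in\bn.
\]
Consequently the sequence $\{\d_{I-P}(T^n)\}$ is constantly equal to $1$, so it certainly does not tend to $0$ as $n\to\infty$. By the very definition of weak $(I-P)$-ergodicity (requiring $\d_{I-P}(T^n)\to 0$), this shows $T$ is not weakly $(I-P)$-ergodic, which is precisely the claim.

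I do not expect any genuine obstacle here: the statement is a one-line corollary of results established immediately before it, and the whole argument is a matter of chaining the characterization proposition with Corollary \ref{wpergnotcomp}. The only point requiring a word of justification is the preliminary remark that $T\in\Sigma_{I-P}(X)$, so that both ergodicity notions are legitimately defined for the same operator $T$.
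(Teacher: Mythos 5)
Your proposal is correct and is exactly the argument the paper intends: the paper derives this proposition directly from Corollary \ref{wpergnotcomp} together with the characterization of weak $P$-ergodicity via $\d_P(T^{n_0})<1$, just as you do. Your preliminary remark that $T\in\Sigma_{I-P}(X)$ is a sensible (if routine) addition that the paper leaves implicit.
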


Let us now fix the following notations:
\begin{eqnarray*}
 && \Sigma_P^u(X)=\{T\in \Sigma_P(X):\  T \ \text{is uniformly $P$-ergodic}\}, \\
&&\Sigma_P^w(X)=\{T\in \Sigma_P(X):\  T \  \text{is weakly $P$-ergodic}\},\\
&&\Sigma_P^{inv}(X)=\{T\in \Sigma_P(X):\ TP=P\}.
\end{eqnarray*}
Then, it is clear  that
\begin{eqnarray*}
&&\Sigma_P^u(X)\subseteq \Sigma_P^w(X), \ \ \
\Sigma_P^u(X)\subseteq \Sigma_P^{inv}(X)
\end{eqnarray*}
Moreover,
$$
 \Sigma_P^u(X)= \Sigma_P^w(X)\cap \Sigma_P^{inv}(X).
$$

\begin{Theorem}\label{unifdensity}
 Let $(X,X_+,\ck,f)$ be an abstract state space and let $P$ be a Markov projection on $X$. Then the set $\Sigma_P^{u}(X)$ is
a norm dense and open subset of $\Sigma_P^{inv}(X)$.
\end{Theorem}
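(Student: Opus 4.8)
The plan is to reduce the whole statement to the characterization already established in Corollary \ref{unif-p-charac}: a member $T$ of $\Sigma_P(X)$ is uniformly $P$-ergodic precisely when $TP=P$ and $\d_P(T^{n_0})<1$ for some $n_0\in\bn$. Since every element of $\Sigma_P^{inv}(X)$ satisfies $TP=P$ by definition, within $\Sigma_P^{inv}(X)$ the property of lying in $\Sigma_P^u(X)$ is governed entirely by the single numerical condition ``$\d_P(T^{n_0})<1$ for some $n_0$.'' I would therefore handle openness and density separately, in each case producing the required exponent and the estimate $\d_P((\cdot)^{n_0})<1$.

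For openness, fix $T\in\Sigma_P^u(X)$ and choose $n_0$ with $\rho:=\d_P(T^{n_0})<1$. Given $S\in\Sigma_P^{inv}(X)$ (so $S,S^{n_0},T^{n_0}$ are all Markov), I would control $\d_P(S^{n_0})$ by comparison with $\d_P(T^{n_0})$. The telescoping identity
\[
S^{n_0}-T^{n_0}=\sum_{k=0}^{n_0-1}S^{k}(S-T)T^{n_0-1-k},
\]
together with $\|S\|=\|T\|=1$, gives $\|S^{n_0}-T^{n_0}\|\le n_0\|S-T\|$. Feeding this into part (ii) of Theorem \ref{Dbp-prp} yields
\[
\d_P(S^{n_0})\le \d_P(T^{n_0})+\|S^{n_0}-T^{n_0}\|\le \rho+n_0\|S-T\|.
\]
Hence, whenever $\|S-T\|<(1-\rho)/n_0$ we have $\d_P(S^{n_0})<1$; since also $SP=P$, Corollary \ref{unif-p-charac} forces $S\in\Sigma_P^u(X)$. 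This exhibits an explicit norm ball around $T$ contained in $\Sigma_P^u(X)$, proving openness.

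For density, given $T\in\Sigma_P^{inv}(X)$ I would use the relaxation $T_\epsilon=(1-\epsilon)T+\epsilon P$ for $0<\epsilon<1$. Because $P$ is a Markov projection and $\ck$ is convex, $T_\epsilon$ is again Markov; it commutes with $P$ since $T$ does; and $T_\epsilon P=(1-\epsilon)TP+\epsilon P=P$ because $TP=P$, so $T_\epsilon\in\Sigma_P^{inv}(X)$. For $x\in N_P$ one has $Px=0$, whence $T_\epsilon x=(1-\epsilon)Tx$, and therefore
\[
\d_P(T_\epsilon)=(1-\epsilon)\,\d_P(T)\le 1-\epsilon<1
\]
using part (i) of Theorem \ref{Dbp-prp}. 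By Corollary \ref{unif-p-charac} (with $n_0=1$) the operator $T_\epsilon$ is uniformly $P$-ergodic, while $\|T_\epsilon-T\|=\epsilon\|P-T\|\to 0$ as $\epsilon\to 0$. Thus every $T\in\Sigma_P^{inv}(X)$ is a norm limit of elements of $\Sigma_P^u(X)$, giving density.

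The only point requiring genuine care is the bookkeeping: verifying in each step that the comparison operator $S$ and the relaxation $T_\epsilon$ actually remain inside $\Sigma_P^{inv}(X)$ (Markovianity, commutation with $P$, and the identity $(\cdot)P=P$), so that Corollary \ref{unif-p-charac} applies. No deeper obstacle arises, since the decisive analytic inputs—the telescoping bound, the Lipschitz estimate of Theorem \ref{Dbp-prp}(ii), and the exact contraction $\d_P(T_\epsilon)=(1-\epsilon)\d_P(T)$—are all at hand.
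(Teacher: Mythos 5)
Your proof is correct and follows essentially the same route as the paper: density via the convex relaxation $(1-\epsilon)T+\epsilon P$ whose restriction to $N_P$ contracts by the factor $1-\epsilon$, and openness via the estimate $\|S^{n_0}-T^{n_0}\|\le n_0\|S-T\|$ combined with Theorem \ref{Dbp-prp}(ii). Your direct computation $T_\epsilon x=(1-\epsilon)Tx$ for $x\in N_P$ is in fact slightly cleaner than the paper's detour through Lemma \ref{Lem2}, but the argument is the same.
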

\begin{proof}
Given any $T\in\Sigma_P^{inv}(X)$, $0<\i<2$, and let us denote
$$
T^{(\i)}=\bigg(1-\frac{\i}{2}\bigg)T+\frac{\i}{2}P.
$$
It is clear that $T^{(\i)}\in\Sigma_P^{inv}(X)$ and
$$\|T-T^{(\i)}\| =\bigg\|\frac{\i}{2}P-\frac{\i}{2}T\bigg\| = \frac{\i}{2}\|P-T\|<\i. $$
 Now we show that $T^{(\i)}\in\Sigma_P^{u}(X)$. For all $x\in N_P$ by Lemma \ref{Lem2},
 $x=\a(x)(u-v),\ u,v\in \ck$ with $u-v\in N_P$, and $0<\a(x)\leq \frac{\l}{2}\|x\|$. Therefore,
\begin{eqnarray*}
\|T^{(\i)}(x)\|&=&\a(x)\|T^{(\i)}(u-v)\| \\[2mm]
&=&\a(x)\bigg\|\bigg(1-\frac{\i}{2}\bigg)T(u-v)+
\frac{\i}{2}P(u-v)\bigg\|\\[2mm]
&=&\a(x)\bigg(1-\frac{\i}{2}\bigg)\bigg\|T(u-v)\bigg\|\\[2mm]
&=& \bigg(1-\frac{\i}{2}\bigg)\|Tx\|\\
&\leq&\bigg(1-\frac{\i}{2}\bigg)\|x\|,
\end{eqnarray*}
which implies $\d_P(T^{(\i)})\leq 1-\frac{\i}{2}$.
Hence, by Theorem \ref{Thr-2}
$T^{(\i)}\in\Sigma_P^u(X)$.

Now let us show that $\Sigma_P^{u}(X)$ is a norm open subset of $\Sigma_P^{inv}(X)$. First we
establish that for every $n\in\N$, the set
$$
\Sigma_{P,n}^{inv}(X)=\bigg\{T\in \Sigma_P^{inv}(X):  \  \d_P(T^n)< 1\bigg\}
$$
is an open subset of $\Sigma_P^{inv}(X)$. Indeed, take any $T\in\Sigma_{P,n}^{inv}(X)$ and letting
$\a:=\d_P(T^n)<1$, we choose $\b$ such that $0<\b<1$ and $\a+\b<1$. Then, for any
$H\in\Sigma_P^{inv}(X)$ with $\|H-T\|<\b/n$ and using (ii) of Theorem \ref{Dbp-prp}, we
obtain
\begin{eqnarray*}
|\d_P(H^n)-\d_P(T^n)|&\leq&\|H^n-T^n\|\\[2mm]
&\leq& \|H^{n-1}(H-T)\|+\|(H^{n-1}-T^{n-1})T\|\\[2mm]
&\leq& \|H-T\|+\|H^{n-1}-T^{n-1}\|\\[2mm]
&\vdots&\\
&\leq& n\|H-T\|<\b.
\end{eqnarray*}
Hence, the above inequality yields that $\d_P(H^n)<\a+\b<1$, i.e.
$H\in\Sigma_{P,n}^{inv}(X)$.
As
$$
\Sigma_P^{u}(X)=\bigcup_{n\in\N}\Sigma_{P,n}^{inv}(X),
$$
we find that $\Sigma_P^{u}(X)$ is an open subset of $\Sigma_P^{inv}(X)$, which completes the proof.
\end{proof}

\noindent Using the same arguments, one can prove the following theorem.

\begin{Theorem}\label{weakdensity}
 Let $(X,X_+,\ck,f)$ be an abstract state space and let $P$ be a Markov projection on $X$. Then the set $\Sigma_P^{w}(X)$ is
a norm dense and open subset of $\Sigma_P(X)$.
\end{Theorem}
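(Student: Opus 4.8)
The plan is to mirror the proof of Theorem \ref{unifdensity} almost verbatim, replacing the invariance subspace $\Sigma_P^{inv}(X)$ by the full space $\Sigma_P(X)$ and the uniform-ergodicity set $\Sigma_P^u(X)$ by the weak-ergodicity set $\Sigma_P^w(X)$. The key structural fact we exploit is the characterization proved just above: for $T\in\Sigma_P(X)$, $T$ is weakly $P$-ergodic if and only if there exists $n_0\in\bn$ with $\d_P(T^{n_0})<1$. Thus, just as in Theorem \ref{unifdensity}, we have the decomposition
\[
\Sigma_P^{w}(X)=\bigcup_{n\in\N}\Sigma_{P,n}(X),\qquad
\Sigma_{P,n}(X)=\bigl\{T\in\Sigma_P(X):\ \d_P(T^n)<1\bigr\},
\]
and the whole argument reduces to showing each $\Sigma_{P,n}(X)$ is open in $\Sigma_P(X)$, together with a density argument.

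For \emph{openness}, I would repeat the estimate from Theorem \ref{unifdensity} without change: given $T\in\Sigma_{P,n}(X)$ with $\a:=\d_P(T^n)<1$, pick $\b$ with $0<\b<1$ and $\a+\b<1$; for any $H\in\Sigma_P(X)$ with $\|H-T\|<\b/n$, the telescoping bound from part (ii) of Theorem \ref{Dbp-prp} gives $|\d_P(H^n)-\d_P(T^n)|\leq\|H^n-T^n\|\leq n\|H-T\|<\b$, whence $\d_P(H^n)<\a+\b<1$, so $H\in\Sigma_{P,n}(X)$. Since a union of open sets is open, $\Sigma_P^w(X)$ is open in $\Sigma_P(X)$. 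Note that this openness step never used the invariance condition $TP=P$, so it transfers directly.

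For \emph{density}, I would again use the perturbation $T^{(\i)}=(1-\tfrac{\i}{2})T+\tfrac{\i}{2}P$ for $0<\i<2$. The norm estimate $\|T-T^{(\i)}\|=\tfrac{\i}{2}\|P-T\|<\i$ shows $T^{(\i)}$ is close to $T$, and since $P$ is a Markov projection and $T\in\Sigma_P(X)$, the convex combination $T^{(\i)}$ again lies in $\Sigma_P(X)$. The computation in the proof of Theorem \ref{unifdensity} (using Lemma \ref{Lem2} to write $x=\a(x)(u-v)$ with $u-v\in N_P$, and the fact that $P(u-v)=0$ for such $u,v$) yields $\d_P(T^{(\i)})\leq 1-\tfrac{\i}{2}<1$, so $T^{(\i)}\in\Sigma_P^w(X)$ by the characterization. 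Letting $\i\to 0$ shows $T$ lies in the closure of $\Sigma_P^w(X)$, hence $\Sigma_P^w(X)$ is dense.

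The only point requiring care — and the main obstacle, though a minor one — is verifying that the density computation for $\d_P(T^{(\i)})$ genuinely goes through for arbitrary $T\in\Sigma_P(X)$ rather than only for $T\in\Sigma_P^{inv}(X)$. Inspecting the chain of equalities in Theorem \ref{unifdensity}, the step $\tfrac{\i}{2}P(u-v)=0$ holds because $u-v\in N_P$ means $P(u-v)=0$, which is independent of whether $TP=P$; hence the estimate $\d_P(T^{(\i)})\leq 1-\tfrac{\i}{2}$ survives intact. Consequently no extra hypothesis is needed, and the remark ``using the same arguments'' is justified: both the openness and density arguments are insensitive to dropping the invariance constraint.
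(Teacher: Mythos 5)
Your proposal is correct and is exactly the argument the paper intends: the paper gives no separate proof of Theorem \ref{weakdensity}, stating only that it follows ``using the same arguments'' as Theorem \ref{unifdensity}, and you have carried out precisely that transfer, correctly identifying that the decomposition $\Sigma_P^{w}(X)=\bigcup_n\{T\in\Sigma_P(X):\d_P(T^n)<1\}$ rests on the weak-ergodicity characterization and that neither the openness estimate nor the bound $\d_P(T^{(\i)})\leq 1-\tfrac{\i}{2}$ uses the invariance condition $TP=P$ (only $P(u-v)=0$ and the commutation $PT=TP$ needed to keep $T^{(\i)}$ in $\Sigma_P(X)$).
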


\begin{remark} We notice that the Baire category theorem has a long history in ergodic theory \cite{Hal}, and it has many applications \cite{B0,Iw}.  Baire type
considerations usually bring easy answers to existence problems. In \cite{BK} a particular case of Theorem \ref{unifdensity} has been established for Markov operators, acting on the Schatten
class $C_1$. We aim that our results in this direction will open new perspectives in the non-commutative ergodic theory.
\end{remark}

\section{Characterizations of Uniformly $P$-ergodic Markov operators}

In this section, we provide a large class of examples of uniformly $P$-ergodic operators on abstract state spaces. Precisely, we describe those uniformly $P$-ergodic operators in terms of the projection $P$. Afterwards, we use this characterization to deduce examples of uniformly $P$-ergodic on $\br^n$, on $\ell_1 $ and on $L_1$- spaces.

Let $X$ be an abstract state space. For an operator $Q$ on $X$, let $Rang(Q)$ and $Fix(Q)$ denote the range and the fixed points of $Q$, respectively. We now prove the following auxiliary fact.

\begin{Lemma}
Let $X$ be a vector space, $P$ be a projection operator on $X$ and let $Q$ be any operator on $X$. Then the following statements are equivalent:
\begin{enumerate}
\item[(i)] $Rang(Q)\cap Fix(P)=\{0\}$ and $PQ=QP$;
\item[(ii)] $PQ=QP=0$.
\end{enumerate}
\end{Lemma}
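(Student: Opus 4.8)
The plan is to exploit the elementary fact that for a projection one has $Fix(P) = Rang(P)$: if $Px = x$ then $x \in Rang(P)$ trivially, and conversely every $x = Py$ satisfies $Px = P^2y = Py = x$, so that $x \in Fix(P)$. With this identification in hand, I expect the whole equivalence to reduce to two short computations, the first of which is essentially immediate.

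For the implication (ii) $\Rightarrow$ (i) I would argue directly. The commutation $PQ = QP$ is already contained in the stronger hypothesis $PQ = QP = 0$. To see that the intersection is trivial, I would take any $y \in Rang(Q) \cap Fix(P)$, write $y = Qx$ for some $x$ and use $Py = y$; then $y = Py = PQx = 0$ because $PQ = 0$. Hence $Rang(Q) \cap Fix(P) = \{0\}$.

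The substantial direction is (i) $\Rightarrow$ (ii), and here the key idea is to show that every vector of the form $PQx$ lies in the forbidden intersection. Indeed, using the commutation hypothesis $PQ = QP$, I would write $PQx = Q(Px) \in Rang(Q)$, while $P(PQx) = P^2 Qx = PQx$ shows that $PQx \in Fix(P)$. Consequently $PQx \in Rang(Q) \cap Fix(P) = \{0\}$ for every $x \in X$, so $PQ = 0$, and then $QP = PQ = 0$ by the commutation hypothesis, which is exactly (ii).

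I do not expect a serious obstacle in this lemma; the only point that requires care is recognizing that the commutation relation $PQ = QP$ is precisely what is needed to place $PQx$ simultaneously in the range of $Q$ and in the fixed-point set of $P$. This is the mechanism that converts the ``geometric'' transversality condition in (i) into the purely algebraic conclusion in (ii), and without the commutation hypothesis the intersection condition alone would not suffice.
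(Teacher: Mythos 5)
Your proof is correct and follows essentially the same route as the paper: both directions hinge on using the commutation $PQ=QP$ to place a composite ($PQx$ in your version, $QPx$ in the paper's) simultaneously in $Rang(Q)$ and $Fix(P)$, forcing it to vanish. The only difference is the cosmetic one of which of the two (equal) products you test against the intersection.
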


\begin{proof}
$(i) \Rightarrow (ii)$ For every $x\in X$, $QPx\in Rang(Q)$. As
\[P(QPx)= QP^2x=QPx,\]
we get $QPx \in Fix(P)$, then by the assumption  $QPx=0$, and hence assertion (ii) follows.

\noindent $(ii) \Rightarrow (i)$ Suppose that $PQ=QP=0$.  If $x\in Rang(Q)\cap Fix(P)$, then, for some $s\in X$, one has
\[x=Qs\ \ \text{and}\ \ Px=x. \] Therefore,
\[x=Px=P(Qs)=0,\]
which means the assertion (i).
\end{proof}

Now, let us prove the following characterization result.

\begin{Theorem}\label{new-charact-unif-p-erg}
Let $P$ be a projection on $X$. Then $T$ is uniformly $P$-ergodic if and only if $T$ can be written as $T=P+Q$, where $Q$ is an operator on $X$ such that $PQ=QP=0$ and $\|Q^{n_0}\|<1$, for some $n_0\in \bn$. Moreover, if $T\in \Sigma(X)$, then
 \[\d_P(T)\leq \|Q\|\leq 2\d_P(T).\]
 \end{Theorem}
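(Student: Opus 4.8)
The plan is to prove the biconditional by treating each implication separately and then to establish the two-sided norm estimate under the extra Markov hypothesis. The algebraic backbone of the entire argument is the identity $T^n = P + Q^n$, valid whenever $T = P+Q$ with $P^2 = P$ and $PQ = QP = 0$. First I would record this by induction: the base case is the decomposition itself, and in the inductive step $T^{n+1} = (P+Q)(P+Q^n) = P^2 + PQ^n + QP + Q^{n+1}$, where $P^2 = P$ while $PQ^n = (PQ)Q^{n-1} = 0$ and $QP = 0$ annihilate the cross terms, leaving $P + Q^{n+1}$. In particular $T^n - P = Q^n$, which is precisely the bridge between uniform $P$-ergodicity and the smallness of a power of $Q$.

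For the direction assuming $T = P+Q$ with $PQ=QP=0$ and $\|Q^{n_0}\|<1$, I would invoke the identity to write $\|T^n - P\| = \|Q^n\|$. Splitting $n = kn_0 + r$ with $0\leq r < n_0$ and using submultiplicativity gives $\|Q^n\| \leq \|Q^{n_0}\|^k \max_{0\leq r<n_0}\|Q^r\|$; since $\|Q^{n_0}\|<1$ and $k\to\infty$ as $n\to\infty$, the right-hand side tends to $0$, so $\|T^n - P\|\to 0$ and $T$ is uniformly $P$-ergodic. Conversely, if $T$ is uniformly $P$-ergodic, Proposition \ref{PTisTPisP} yields $TP = PT = P$. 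Setting $Q := T - P$ gives the decomposition $T = P + Q$ together with $PQ = PT - P^2 = P - P = 0$ and $QP = TP - P^2 = P - P = 0$. The identity then gives $Q^n = T^n - P$, whose norm tends to $0$ by hypothesis, so $\|Q^{n_0}\|<1$ for some $n_0$.

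For the estimate I would assume $T\in\Sigma(X)$; then Proposition \ref{PTisTPisP} also gives $P\in\Sigma(X)$, so $\|P\| = 1$ and hence $\|I-P\|\leq 2$. For the lower bound, note that on $N_P$ we have $Px = 0$, so $Tx = Px + Qx = Qx$; hence $\|Tx\| = \|Qx\| \leq \|Q\|\,\|x\|$ for $x\in N_P$, and taking the supremum gives $\d_P(T)\leq\|Q\|$. For the upper bound I would use $QP = 0$ to write $Qx = Q(I-P)x$ for every $x$; since $(I-P)x \in N_P$ and $T$ agrees with $Q$ on $N_P$, this yields $\|Qx\| = \|T(I-P)x\| \leq \d_P(T)\,\|(I-P)x\| \leq 2\,\d_P(T)\,\|x\|$, whence $\|Q\|\leq 2\d_P(T)$.

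The only genuinely delicate point is the constant $2$ in the upper estimate: it hinges on $\|I - P\|\leq 2$, which in turn requires $P$ to be a Markov projection (so that $\|P\| = 1$). This is exactly what Proposition \ref{PTisTPisP} supplies once $T\in\Sigma(X)$ and $T$ is uniformly $P$-ergodic; the remaining steps are routine manipulations with $\d_P$ and the defining relations $PQ = QP = 0$.
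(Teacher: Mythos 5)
Your proposal is correct and follows essentially the same route as the paper: the decomposition $Q=T-P$ with $PQ=QP=0$ via Proposition \ref{PTisTPisP}, the identity $T^n=P+Q^n$, and the two norm estimates obtained from $Tx=Qx$ on $N_P$ and from $Q=T(I-P)$ together with $\|I-P\|\leq 2$. The only differences are cosmetic (you make the induction and the remainder term in $n=kn_0+r$ explicit, and argue the upper bound pointwise rather than citing property (v) of Theorem \ref{Dbp-prp}).
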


\begin{proof} Suppose that $T$ is uniformly $P$-ergodic. Put $Q=T-P$,  then Proposition \ref{PTisTPisP} implies $PQ=QP=0$. Therefore, $T^n=P+Q^n$. Hence, the uniform $P$-ergodicity implies
the existence
of $n_0\in \bn$ such that
\[\|Q^{n_0}\| =\|T^{n_0}-P\|<1. \]
Conversely, suppose that $T=P+Q$ and $Q$ satisfies the given hypotheses. Then
for every $n\in \bn$, we have
\[T^n=P+Q^n.\]
Therefore,
\[\|T^n-P\|=\|Q^n\| \leq \|Q^{n_0}\|^{[n/n_0]}\to 0\ \text{as}\ n\to \infty ,\]
so $T$ is uniformly $P$-ergodic.

Now assume that $T$ is a Markov operator.  Then
\[ \d_P(T)= \sup_{x\in N_P,\ x\neq
0}\frac{\|Px+Qx\|}{\|x\|}= \sup_{x\in N_P,\ x\neq
0}\frac{\|Qx\|}{\|x\|} =\d_P(Q)\leq \|Q\|.\]

Also, as $T\in \Sigma(X)$ we get $P\in\Sigma(X)$, Therefore, by Proposition \ref{PTisTPisP}
\begin{eqnarray*}
\|Q\| &=& \|T-P\|\\
&=& \|T-TP\| \\
&=& \|T(I-P)\|\\
&\leq & \d_P(T)\|I-P\| \ \ \  (\text{using (v) of Theorem \ref{Dbp-prp})}\\
&\leq & 2\d_P(T),
\end{eqnarray*}
This completes the proof.
\end{proof}

From this theorem, we immediately get  the following result.

\begin{Corollary}\label{sumPandQunif-P-ergod}
Let $X$ be a normed space and let $P$ be a projection on $X$. If $Q$ is an operator on $X$ such that $PQ=QP=0$, then $T=P+\frac{r}{\|Q\|}Q$ is uniformly $P$-ergodic, for all $r\in (-1,1)$ .
\end{Corollary}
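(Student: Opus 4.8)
The plan is to apply Theorem \ref{new-charact-unif-p-erg} directly, since the corollary is essentially a specialization of it. The key observation is that the operator $\frac{r}{\|Q\|}Q$ inherits the orthogonality relations of $Q$ with $P$, and its norm is strictly less than $1$, which is exactly the hypothesis needed to invoke the converse direction of the characterization theorem.

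First I would set $\tilde{Q}=\frac{r}{\|Q\|}Q$ and verify the orthogonality conditions. Since $PQ=QP=0$ by assumption, scalar multiplication gives immediately
\[
P\tilde{Q}=\frac{r}{\|Q\|}PQ=0\quad\text{and}\quad \tilde{Q}P=\frac{r}{\|Q\|}QP=0.
\]
Next I would control the norm of $\tilde{Q}$. A direct computation yields
\[
\|\tilde{Q}\|=\left\|\frac{r}{\|Q\|}Q\right\|=\frac{|r|}{\|Q\|}\|Q\|=|r|<1,
\]
where the final strict inequality is precisely the condition $r\in(-1,1)$. Taking $n_0=1$, we then have $\|\tilde{Q}^{n_0}\|=\|\tilde{Q}\|=|r|<1$, so $\tilde{Q}$ satisfies all the hypotheses required of the operator $Q$ in Theorem \ref{new-charact-unif-p-erg}.

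Finally, since $T=P+\tilde{Q}$ with $P\tilde{Q}=\tilde{Q}P=0$ and $\|\tilde{Q}^{n_0}\|<1$, the converse implication of Theorem \ref{new-charact-unif-p-erg} applies verbatim and yields that $T$ is uniformly $P$-ergodic. I do not anticipate any genuine obstacle here: the only minor point worth noting is the implicit assumption $Q\neq 0$ so that $\|Q\|\neq 0$ and the expression $\frac{r}{\|Q\|}Q$ is well-defined; if $Q=0$ the statement is trivial since then $T=P$ is already a projection and hence uniformly $P$-ergodic. The entire argument is a straightforward verification that the scaled operator meets the structural hypotheses of the preceding theorem.
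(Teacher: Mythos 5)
Your proposal is correct and is essentially the paper's own argument: the paper derives this corollary "immediately" from Theorem \ref{new-charact-unif-p-erg} by exactly the verification you give, namely that $\tilde{Q}=\frac{r}{\|Q\|}Q$ still satisfies $P\tilde{Q}=\tilde{Q}P=0$ and has norm $|r|<1$, so the converse direction of the characterization applies with $n_0=1$. Your remark about the degenerate case $Q=0$ is a sensible (and harmless) addition that the paper leaves implicit.
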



The deduced results above enable us to produce several examples of uniformly $P$-ergodic operators.

\begin{example}
Let us consider $\br^n$ and we denote by $E_i\ (1\leq i\leq n)$ the diagonal matrix units in $\bm_n(\br)$. Then the operator
\[T= \sum_{i=1}^mE_i + \sum_{k=m+1}^{n}r_kE_k,\ \ r_k\in \br \ \text{and}\ |r_k|<1,\]
is uniformly $P$-ergodic, where $P= \sum_{i=1}^mE_i $. As in Theorem \ref{new-charact-unif-p-erg}, we have $Q=\sum_{k=m+1}^{n}r_kE_k$. Indeed, $PQ=QP=0$ and $\|Q\|<1$.
\end{example}

Next example shows that the commutativity of $P$ and $Q$ in Theorem \ref{new-charact-unif-p-erg} is a necessary condition:.

\begin{example}
Let us consider the following operators
\[Q= \begin{pmatrix}
0 &  0  & 0 \\
0 & 0  & 0 \\
 0 &\frac{1}{2} & \frac{1}{4}
\end{pmatrix} \ \text{and} \
P=\begin{pmatrix}
1 &  0  & 0 \\
0 & 1  & 0 \\
 0 & 0 & 0
\end{pmatrix}
.\]
Then $P$ is a projection, $\|Q\|<1$, $PQ=0$ but $QP\neq 0$.  Letting $T=P+Q$, we get that
\[T^n=\begin{pmatrix}
1 &  0  & 0 \\
0 & 1  & 0 \\
 0 &\frac{4^{n+1}-1}{6\cdot4^n} & \frac{1}{4^n}
\end{pmatrix}
\]
 converges to
$$
\tilde P=\begin{pmatrix}
1 &  0  & 0 \\
0 & 1  & 0 \\
 0 & \frac{2}{3} & 0
\end{pmatrix}
.$$
Hence, $T$ is uniformly $\tilde P$-ergodic, but not uniformly $P$-ergodic. Indeed, $T=\tilde P+\tilde Q$, where
$$
\tilde Q= \begin{pmatrix}
0 &  0  & 0 \\
0 & 0  & 0 \\
 0 &-\frac{1}{6} & \frac{1}{4}
\end{pmatrix}
.$$
\end{example}

Next example shows that uniform $P$-ergodicity does not imply quasi-compactness.

\begin{example}
Consider the space $\ell_1$, the subspaces $\ca =\{x\in \ell_1;\ x_{2n}=0\}$ and the operator $P:\ell_1 \to \ca$  defined by
\[P(x)= (x_1+x_2, 0, x_3+x_4, 0, \ldots).\]
Then $P$ is a projection on $\ca$. We construct a class of uniformly $P$-ergodic operators on $\ell_1$ as follows:

Let $\cq:\ell_1 \to \ell_1$ be the operator defined by
\[x \mapsto \left (\frac{-x_2}{2},\frac{x_2}{2}, \frac{-x_4}{2}, \frac{x_4}{2}, \ldots \right ).\]
It is clear that $\cq^{n} \to 0$, so for some $n_0\in \bn$, we have $\|\cq^{n_0}\|<1$. Also, $P\cq=\cq P=0$. Then by Theorem \ref{new-charact-unif-p-erg}, we have that the operator $T=P+\cq$ is uniformly $P$-ergodic,
but one can see that $T$ is not quasi-compact \cite{HH}.
\end{example}

 Now in the following example we construct uniformly $P$-ergodic operators on $L_1$-space:

\begin{example}
Let $(S, \mathcal{B}, \mu)$ be a probability measure space and consider the space $X=L^1(S, \mathcal{B}, \mu)$. We construct a class of uniformly $P$-ergodic operators on $X$ as follows:

Let $f_i(t)\in L^\infty (\mu)$, for $1\leq i\leq n$, and let $E_1$ denote the subspace generated by $span\{f_i\}$. If $P$ is a projection operator from $X$ onto $E_1$, then the operator $P$ can be written as follows
\[(Pf)(t):=\sum_{i=1}^n \Gamma_i(f)f_i(t),\]
where $\Gamma_i$ are linear functionals on $X$, which can be represented as
\[\Gamma_i(f)= \int_S f(t)\gamma_i(t)d\mu ,\  \forall  f\in X\] with
\[\gamma_i\in L^\infty(\mu),\ \text{such that}\  \int_S \gamma_i(t)f_j(t) d\mu =\d_{i,j}.\]

Similarly, let us construct another projection $Q$ on $X$: Let $g_i(t)\in L^\infty (\mu)$, for $1\leq i\leq m$, and let $E_2$ denote the subspace generated by $span\{g_i\}$. Let $Q$ be a projection operator from $X$ onto $E_2$ which is defined by
\[(Qf)(t):=\sum_{i=1}^m \Lambda_i(f)g_i(t),\]
where $\Lambda_i$ are linear functionals on $X$, which can be represented as
\[\Lambda_i(f)= \int_S f(t)\l_i(t)d\mu ,\  \forall  f\in X\] with
\[\l_i\in L^\infty(\mu),\ \text{such that}\  \int_S \l_i(t)g_j(t) d\mu =\d_{i,j}.\]

\noindent In addition, we assume that the choice of $\l_j(t)$ and $\gamma_i(t)$ satisfying
\begin{equation}\label{condition}
 \l_j(t)f_i(t)=0\ \mu\ \text{a.e. and} \ \g_j(t)g_i(t)=0 \ \mu \ \text{a.e.}
\end{equation}

Then $P$ and $Q$ are projections from $X$ onto $E_1$ and $E_2$, respectively. To show that $QP=0$, let $f\in X$ then we have
\begin{eqnarray*}
QP(f) &=& Q\left (\sum_{i=1}^n \Gamma_i(f)f_i(t)\right )\\
&=& \sum_{i=1}^n \Gamma_i(f)Q(f_i(t))\\
&=& \sum_{i=1}^n \Gamma_i(f)\sum_{j=1}^m \Lambda_j(f_i)g_j(t)\\
&=& \sum_{i=1}^n\sum_{j=1}^m \Gamma_i(f) \Lambda_j(f_i)g_j(t)\\
&=& 0,
\end{eqnarray*}
since $\Lambda_j(f_i) = 0$ (see, \eqref{condition}). Similarly, by the second part of \eqref{condition} we get $PQ(f)=0$, for all $f\in X$.
 Therefore,  Corollary \ref{sumPandQunif-P-ergod} implies  that $T=P+rQ$ is a uniformly $P$-ergodic operator on $X$, for all $r\in (-1,1)$ .
\end{example}

\section{On uniform and weak Mean Ergodicities}

In this section, we are going to investigate uniform mean ergodicities of Markov operator.

Given a bounded linear operator $T:X\to X$, we set
$$
A_n(T) =\frac{1}{n}\sum_{k=1}^{n}T^k.
$$
Recall that $T:X\to X$ is said to be
\begin{enumerate}
\item[(a)] \textit{mean ergodic} if for every $x\in X$
\[\lim_{n\to\infty}A_n(T)x=Qx;\]
\item[(b)] \textit{uniformly mean ergodic} if
\[\lim_{n\to\infty}\|A_n(T)-Q\|=0;\]
\end{enumerate}
for some operator $Q$ on $X$.

In this setting, it is well-known that $Q$ is a projection \cite{K}, which is called the \textit{limiting projection of $T$}, and denoted by $Q_T$. Moreover, if $T\in \Sigma (X)$, then $Q_T$ is also Markov.

By analogy with the weak $P$-ergodicity, one may introduce the following notion. A linear operator $T$ is called \textit{weakly $P$-mean ergodic} if
$$
\lim_{n\to\infty}\d_P(A_n(T))=0.
$$
 It is clear that any uniformly mean ergodic operator is weakly $Q_T$-mean ergodic.

By Theorem \ref{Thr-2}, we obtain the following result.

\begin{Corollary}
Assume that  $T\in \Sigma (X)$ and $T$ is mean ergodic with its limiting projection $Q_T$. If there exists an $n_0\in \bn$ such that $\d_{Q_T}(T^{n_0})<1$, then $T$ is uniformly $Q_T$-ergodic.
\end{Corollary}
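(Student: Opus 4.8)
The plan is to reduce the statement directly to the hypotheses of Theorem~\ref{Thr-2}, which already packages the hard analytic work. Recall that Theorem~\ref{Thr-2} asserts: if $T\in\Sigma_P(X)$ satisfies $TP=P$ and $\d_P(T^{n_0})<1$ for some $n_0$, then $T$ is uniformly $P$-ergodic. Here the role of the projection $P$ will be played by the limiting projection $Q_T$. So the entire task is to verify, under the present hypotheses, that the two structural conditions $T\in\Sigma_{Q_T}(X)$ and $TQ_T=Q_T$ both hold; the spectral/decay condition $\d_{Q_T}(T^{n_0})<1$ is assumed outright.

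First I would record what mean ergodicity gives us about $Q_T$. Since $T$ is mean ergodic with limiting projection $Q_T$, we have $A_n(T)x\to Q_Tx$ for every $x$, and $Q_T$ is a projection (as noted in the text, citing \cite{K}). The standard and crucial identity is that $Q_T$ commutes with $T$ and is fixed by $T$: concretely, $TQ_T=Q_TT=Q_T$. This follows from the Cesàro structure, since
\[
TA_n(T)=A_n(T)T=A_n(T)+\frac{1}{n}\bigl(T^{n+1}-T\bigr),
\]
and letting $n\to\infty$ the error term $\tfrac1n(T^{n+1}-T)$ tends to $0$ strongly because $\|T\|=1$ forces $\|T^{n+1}-T\|\le 2$. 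Passing to the limit yields $TQ_T=Q_TT=Q_T$ in the strong operator topology, which is exactly what we need: $TQ_T=Q_TT$ gives $T\in\Sigma_{Q_T}(X)$, and $TQ_T=Q_T$ is the invariance condition $TQ_T=Q_T$ demanded by Theorem~\ref{Thr-2}.

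With those two identities in hand, the argument closes immediately: the hypothesis supplies $n_0\in\bn$ with $\d_{Q_T}(T^{n_0})<1$, and since $T\in\Sigma_{Q_T}(X)$ with $TQ_T=Q_T$, Theorem~\ref{Thr-2} applies verbatim to conclude $\|T^n-Q_T\|\to 0$, i.e. $T$ is uniformly $Q_T$-ergodic. One should also note that $Q_T\in\Sigma(X)$ (the text remarks that the limiting projection of a Markov operator is again Markov), so $Q_T$ is a legitimate Markov projection and the generalized coefficient $\d_{Q_T}$ is well defined.

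The only genuine obstacle I anticipate is the justification of $TQ_T=Q_TT=Q_T$ from mean ergodicity, and in particular making sure one has it in a strong enough sense to feed into Theorem~\ref{Thr-2}. Theorem~\ref{Thr-2} is stated for operators, so strictly one wants the operator identity, not merely a pointwise one; but $TQ_T=Q_T$ and $Q_TT=Q_T$ hold as genuine operator equalities once established pointwise on all of $X$, so this is harmless. A subtlety worth a sentence is that mean ergodicity only gives strong, not uniform, convergence of $A_n(T)$, yet this is precisely why the decay hypothesis $\d_{Q_T}(T^{n_0})<1$ must be imposed separately rather than derived — it is the ingredient that upgrades the averaged convergence to the uniform power convergence $\|T^n-Q_T\|\to0$. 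Everything else is a direct citation of Theorem~\ref{Thr-2}.
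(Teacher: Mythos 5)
Your proposal is correct and follows exactly the route the paper intends: the paper derives this corollary by a direct appeal to Theorem~\ref{Thr-2} with $P=Q_T$, leaving implicit the verification that mean ergodicity yields $TQ_T=Q_TT=Q_T$ (hence $T\in\Sigma_{Q_T}(X)$ and the invariance condition), which you supply correctly via the Ces\`aro identity. No gaps; your write-up simply makes explicit what the paper omits.
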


\begin{Theorem}
Assume that  $T\in \Sigma (X)$ and $T$ is mean ergodic with its limiting projection $Q_T$.  If $T\in \Sigma_P^w(X)$, for some $P$, then $Q_T\leq P$.
\end{Theorem}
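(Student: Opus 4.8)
The plan is to verify the two identities $Q_TP=Q_T$ and $PQ_T=Q_T$, whose conjunction is exactly the relation $Q_T\le P$. The hypothesis $T\in\Sigma_P^w(X)$ supplies two pieces of information: $T\in\Sigma_P(X)$, so $PT=TP$; and $T$ is weakly $P$-ergodic, i.e. $\d_P(T^n)\to 0$, so in particular $\d_P(T^{n_0})<1$ for some $n_0\in\bn$. First I would feed this last fact into Proposition \ref{Prob-1} to obtain the norm convergence $\|T^n(I-P)\|\to 0$.

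Next I would average. Since the Ces\`aro means of a null sequence are again null,
\[
\|A_n(T)(I-P)\|\le\frac1n\sum_{k=1}^n\|T^k(I-P)\|\longrightarrow 0 .
\]
At the same time, mean ergodicity gives $A_n(T)y\to Q_Ty$ for every $y\in X$, so applying this to $y=(I-P)x$ yields $A_n(T)(I-P)x\to Q_T(I-P)x$ for each $x$. The displayed estimate forces $A_n(T)(I-P)x\to 0$ as well; comparing the two limits gives $Q_T(I-P)=0$, that is, $Q_T=Q_TP$.

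It then remains to show that $P$ and $Q_T$ commute. Since $PT=TP$, the projection $P$ commutes with each power $T^k$ and hence with every average $A_n(T)$; passing to the limit in $PA_n(T)x=A_n(T)Px$ and using continuity of $P$ gives $PQ_T=Q_TP$. Together with $Q_T=Q_TP$ this yields $Q_TP=PQ_T=Q_T$, i.e. $Q_T\le P$. The only delicate point is reconciling the operator-norm convergence $\|A_n(T)(I-P)\|\to 0$ with the merely strong convergence $A_n(T)\to Q_T$: both describe $\lim_n A_n(T)(I-P)x$, and it is precisely this overlap that pins the strong limit $Q_T(I-P)$ to $0$. Everything else is routine once the decay $\|T^n(I-P)\|\to 0$ borrowed from Proposition \ref{Prob-1} is in place.
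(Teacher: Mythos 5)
Your argument is correct and follows essentially the same route as the paper: both invoke Proposition \ref{Prob-1} to get $\|T^n(I-P)\|\to 0$, pass to the Ces\`aro averages to conclude $\|A_n(T)(I-P)\|\to 0$, and then extract $Q_T(I-P)=0$ together with the symmetric relation to obtain $Q_T\le P$. The only cosmetic difference is that the paper reads off $Q_T(I-P)=0$ from the algebraic identity $Q_T=Q_TA_n(T)$ and handles $(I-P)Q_T$ via $(I-P)A_n(T)Q_T$, whereas you compare the strong limit $A_n(T)(I-P)x\to Q_T(I-P)x$ with the norm decay and then use $PA_n(T)=A_n(T)P$ in the limit; both are equally valid instances of the same idea.
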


\begin{proof}
Suppose that  $T\in \Sigma_P^w(X)$, so $\d_P(T^{n_0})<1$ for some $n_0\in \bn$. Then by Proposition \ref{Prob-1}, we have
 $$\|T^{n}(I-P)\|\to 0.$$
As $TQ_T=Q_T$, $A_n(T)Q_T=Q_TA_n(T)=Q_T$. Then
\begin{eqnarray*}
\|Q_T(I-P)\| &=& \|Q_TA_{n}(T)(I-P)\|
\\
&\leq &\|A_{n}(T)(I-P)\|\\
&\leq & \frac{1}{n}\sum_{k=1}^n\|T^k(I-P)\|\to 0,
\end{eqnarray*}
so $Q_T(I-P)=0$ which implies $Q_T=Q_TP$.

\noindent On the other hand,
\begin{eqnarray*}
\|(I-P)Q_T\| &=& \|(I-P)A_n(T)Q_T\| \\
&\leq & \|(I-P)A_n(T)\| \|Q_T\| \\
&\leq & \|A_n(T)(I-P)\|  \to 0,
\end{eqnarray*}
so $(I-P)Q_T=0$ which implies $Q_T=Q_TP$, and hence $Q_T\leq P$.
\end{proof}

It is natural to ask: when mean ergodic operator would be uniformly mean ergodic? Next result clarifies this question in terms of $\d_P$.

\begin{Theorem}\label{Mergodp}
Assume that  $T\in \Sigma (X)$ and $T$ is mean ergodic with its limiting projection $Q_T$. Then the following statements are equivalent:
\begin{enumerate}
\item[(i)] $T$ is uniformly mean ergodic;
\item[(ii)] there exists an $n_0\in\bn$ such that $\d_{Q_T}(A_{n_0}(T))<1$. Moreover,
$$
\|A_n(T)-Q_T\|\leq \frac{2(n_0+1)}{1-\d_{Q_T}(A_{n_0}(T))}\cdot\frac{1}{n}.
$$
\end{enumerate}
\end{Theorem}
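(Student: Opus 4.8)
The key identity I would establish first relates $A_n(T)$ to its own powers via $Q_T$. The plan is to show that for the operator $A_n(T)$, the limiting projection is still $Q_T$, and that $A_n(T)$ acts on $N_{Q_T}$ nicely. Since $TQ_T = Q_T T = Q_T$ (because $T$ is mean ergodic with limiting projection $Q_T$), we have $A_n(T)Q_T = Q_T A_n(T) = Q_T$, so $A_n(T) \in \Sigma_{Q_T}(X)$ with $A_n(T)Q_T = Q_T$. This places $A_n(T)$ in exactly the setting of Theorem \ref{Thr-2} and Corollary \ref{unif-p-charac}, with the projection being $Q_T$.

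\textbf{The implication (i) $\Rightarrow$ (ii).} Suppose $T$ is uniformly mean ergodic, so $\|A_n(T) - Q_T\| \to 0$. The abstract state space is $\l$-generating, so I would invoke Proposition \ref{Prob-2} (applied to the operator $A_{n_0}(T)$ in place of $T$, with projection $Q_T$): uniform convergence $\|A_n(T) - Q_T\| \to 0$ yields the existence of $n_0$ with $\d_{Q_T}(A_{n_0}(T)) < 1$, exactly as in the proof of Proposition \ref{Prob-2} using (iii) of Theorem \ref{Dbp-prp}. This direction is essentially routine once one recognizes that $A_n(T)$ plays the role of the iterate.

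\textbf{The implication (ii) $\Rightarrow$ (i) with the rate.} This is where the main work lies, and I expect it to be the principal obstacle. The difficulty is that $A_n(T)$ is a Ces\`aro average, not a power, so I cannot directly iterate $\d_{Q_T}(A_{n_0}(T)) < 1$ as in the submultiplicativity argument of Proposition \ref{Prob-1}. Instead, the plan is to derive an explicit estimate for $\|A_n(T) - Q_T\|$ by decomposing the long average into blocks governed by $A_{n_0}(T)$. Writing $n = q n_0 + s$ and telescoping, one relates $A_n(T) - Q_T$ to the action of powers-of-averaging on $N_{Q_T}$; applying (v) of Theorem \ref{Dbp-prp} converts a bound on $N_{Q_T}$ into a bound on $\|(A_{n_0}(T))^k (I - Q_T)\|$, which decays like $\d_{Q_T}(A_{n_0}(T))^{k}$. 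Tracking the remainder terms and the factor $\|I - Q_T\| \leq 2$ carefully should yield the stated bound
$$
\|A_n(T) - Q_T\| \leq \frac{2(n_0+1)}{1 - \d_{Q_T}(A_{n_0}(T))}\cdot \frac{1}{n}.
$$
The constant $(n_0+1)$ and the $1/n$ decay strongly suggest that the correct estimate comes not from geometric decay of blocks but from a direct manipulation: expanding $A_n(T)$ and using $A_n(T) - Q_T = A_n(T)(I - Q_T)$, then bounding $\|A_n(T)(I-Q_T)\|$ by a telescoping sum that collapses because $T^k(I-Q_T)$ averages are controlled by $\d_{Q_T}(A_{n_0}(T))$. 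The hard part will be organizing the algebra so that the geometric factor $1 - \d_{Q_T}(A_{n_0}(T))$ appears in the denominator and the averaging produces the $1/n$ factor simultaneously; I would handle this by writing $n A_n(T)(I-Q_T)$ as a sum over $n_0$-blocks, bounding each block's contribution by $2$ times a geometric term in $\d_{Q_T}(A_{n_0}(T))$, summing the geometric series, and dividing by $n$.
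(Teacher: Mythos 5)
Your (i) $\Rightarrow$ (ii) direction is fine and is exactly what the paper does (the Proposition \ref{Prob-2} argument with $A_{n_0}(T)$ in place of $T^{n_0}$ and $Q_T$ in place of $P$). The genuine gap is in (ii) $\Rightarrow$ (i): the block decomposition you settle on at the end cannot produce the $1/n$ rate, and in fact cannot even produce convergence to zero. If you split $nA_n(T)(I-Q_T)=\sum_{k=1}^{n}T^k(I-Q_T)$ into blocks of length $n_0$, the $j$-th block equals $n_0\,T^{jn_0}A_{n_0}(T)(I-Q_T)$, and the only bound available from the hypothesis is $\|T^{jn_0}A_{n_0}(T)(I-Q_T)\|\leq\|A_{n_0}(T)(I-Q_T)\|\leq 2\d_{Q_T}(A_{n_0}(T))$ --- the same for every $j$, with no geometric decay, because the inter-block factor $T^{jn_0}$ is merely a contraction and not a $\d_{Q_T}$-contraction: hypothesis (ii) controls $\d_{Q_T}(A_{n_0}(T))$ but says nothing about $\d_{Q_T}(T^{n_0})$ (the powers $T^n$ need not converge at all even when $A_n(T)$ does; that is precisely the difference between uniform mean ergodicity and uniform ergodicity). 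Summing the blocks and dividing by $n$ therefore only yields $\limsup_n\|A_n(T)-Q_T\|\leq 2\d_{Q_T}(A_{n_0}(T))$, a constant, and iterating the block argument does not improve it because the blocks do not compose multiplicatively.

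The missing idea in the paper is a direct algebraic one that iterates nothing. From $A_n(T)(I-T)=\frac{1}{n}(I-T^n)$ one gets $\|A_n(T)(I-T^k)\|\leq 2k/n$ and hence $\|A_n(T)(I-A_{n_0}(T))\|\leq (n_0+1)/n$. Then, writing $\rho=\d_{Q_T}(A_{n_0}(T))$, properties (ii) and (vi) of Theorem \ref{Dbp-prp} give
$\d_{Q_T}(A_n(T))(1-\rho)\leq \d_{Q_T}(A_n(T))-\d_{Q_T}(A_n(T)A_{n_0}(T))\leq\d_{Q_T}\bigl(A_n(T)(I-A_{n_0}(T))\bigr)\leq (n_0+1)/n$,
so $\d_{Q_T}(A_n(T))\leq \frac{n_0+1}{(1-\rho)n}$; finally $\|A_n(T)-Q_T\|\leq 2\,\d_{Q_T}(A_n(T))$ because $A_n(T)Q_T=Q_T$ and $x-Q_Tx\in N_{Q_T}$ with $\|x-Q_Tx\|\leq 2\|x\|$. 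Your instinct that ``the correct estimate comes not from geometric decay of blocks but from a direct manipulation'' was right, but the manipulation needed is this sandwich of $\d_{Q_T}(A_n(T)(I-A_{n_0}(T)))$ between $\d_{Q_T}(A_n(T))(1-\rho)$ and $(n_0+1)/n$, which your proposal does not supply.
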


\begin{proof}
We note that if $T=I$, then $Q_T=I$ and according to the definition $\d_{Q_T}(T)=1$, hence the statement of the theorem follows. Therefore, in what follows it is always assumed $T\neq I$.
The implications (i) $\Rightarrow$ (ii) directly follows using the same arguments as in the proof of Proposition \ref{Prob-2}, replacing $T^n$ by $A_n(T)$ and $P$ by $Q_T$.

(ii) $\Rightarrow$ (i). Assume that $\rho = \d_{Q_T}(A_{n_0}(T))<1$, for some $n_0\in \bn$. Then
\begin{eqnarray*}
A_n(T)(I-T)&=& A_n(T)-A_n(T)T \\
&=& \frac{1}{n}\sum_{k=0}^{n-1}T^k- \frac{1}{n}\sum_{k=0}^{n-1}T^{k+1}\\
&=&\frac{1}{n}(I-T^n) ,
\end{eqnarray*}
so,  $\|A_n(T)(I-T)\|\leq \frac{2}{n}$, and then
\[ \|A_n(T)(I-T^k)\|\leq \frac{2k}{n},\ k\in \bn\]
which implies
\begin{eqnarray*}
\|A_n(T)(I-A_{n_0}(T))\|&=&\bigg\|A_n(T)\bigg(\frac{1}{n_0}\sum_{k=1}^{n_0}(I-T^k)\bigg)\bigg\|\\[2mm]
&\leq &\frac{1}{n_0}\sum_{k=1}^{n_0}\|A_n(T)(I-T^k)\|\\[2mm]
&\leq& \frac{n_0+1}{n}.
\end{eqnarray*}
Therefore,
\begin{equation}\label{mer-1}
\d_{Q_T}(A_n(T)(I-A_{n_0}(T))) \leq \frac{n_0+1}{n}.
\end{equation}

Using Properties (ii) and (vi) of Theorem \ref{Dbp-prp}, we have
\begin{eqnarray*}
\d_{Q_T}(A_n(T)(I-A_{n_0}(T)))  &\geq & \d_{Q_T}(A_n(T)) - \d_{Q_T}(A_n(T)A_{n_0}(T))\\
&\geq &  \d_{Q_T}(A_n(T)) - \d_{Q_T}(A_n(T))\d_{Q_T}(A_{n_0}(T))\\
&=& \d_{Q_T}(A_n(T))(1-\rho).
\end{eqnarray*}
By \eqref{mer-1} and as $\rho <1$, we have
\begin{equation}\label{mer-2}
\d_{Q_T}(A_n(T))\leq \frac{n_0+1}{1-\rho}\cdot\frac{1}{n}.
\end{equation}
Now,
\begin{eqnarray*}
\d_{Q_T}(A_n(T)) &=& \sup_{y\in N_{Q_T}}\frac{\|A_n(T)y\|}{\|y\|}\\
&\geq & \sup_{x\in X} \frac{\|A_n(T)x-A_n(T)Q_Tx\|}{\|x-Q_Tx\|}\ \  \ \ \ (\text{for}\ y=x-Q_Tx)\\
&= & \sup_{x\in X} \frac{\|A_n(T)x-Q_Tx\|}{\|x-Q_Tx\|}\ \ \ (\text{since}\ \ A_n(T)Q_T=Q_T)\\
&\geq & \frac{1}{2}\sup_{x\in X} \frac{\|A_n(T)x-Q_Tx\|}{\|x\|}\\
&=& \frac{1}{2} \|A_n(T)-Q_T\|.
\end{eqnarray*}
Then by \eqref{mer-2}
$$
\|A_n(T)-Q_T\|\leq \frac{2(n_0+1)}{1-\rho}\cdot\frac{1}{n}
$$
which yields the desired assertion.
\end{proof}

Now, we are going to introduce an abstract analogue of the
well-known Doeblin's Condition \cite{Num}.

\begin{Definition} Let $(X,X_+,\ck,f)$ be an abstract state space, whose cone $X_+$  is $\l$-generating, let $P$ be a Markov projection on $X$, and let $T\in \Sigma_P (X)$.
We say that $T$ satisfies \textit{condition $\frak{D}_m$} if there exists a constant
$\t\in(0,1]$ and an integer $n_0\in\bn$ and for every $x,y\in \ck$ with $x-y\in N_P$, there exists $z_{xy}\in\ck$ and $\f_{xy}\in X_{+}$ with
$$
\sup\limits_{xy}\|\f_{xy}\|\leq \eta,
$$
where
\begin{equation}\label{Dm0}
0\leq\eta<\t+\frac{1}{\l}-1,
\end{equation}
such that
\begin{equation}\label{Dm}
A_{n_0}(T)x+\f_{xy}\geq\t z_{xy}, \ \ A_{n_0}(T)y+\f_{xy}\geq\t z_{xy}.
\end{equation}
\end{Definition}

The next result characterize the weakly $P$-mean ergodic Markov operators in terms of the above condition $\frak{D}_m$.\\

\begin{Theorem}\label{Dm1}  Let $(X,X_+,\ck,f)$ be an abstract state space whose cone $X_+$  is $\l$-generating, and let $P$ be a Markov projection on $X$.
Assume that $T\in \Sigma_P (X)$. Then the following conditions are equivalent:
\begin{enumerate}
\item[(i)]  $T$ satisfies condition $\frak{D}_m$;

\item[(ii)] there is an $n_0\in\bn$ such that $\d_P(A_{n_0}(T))<1$;

 \item[(iii)] $T$ is weakly $P$-mean ergodic.
\end{enumerate}
\end{Theorem}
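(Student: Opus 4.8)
The plan is to establish the cycle (i) $\Rightarrow$ (ii) $\Rightarrow$ (iii) $\Rightarrow$ (i); note that (ii) $\Leftrightarrow$ (iii) is the mean-ergodic counterpart of the weak $P$-ergodicity characterization proved earlier, while (i) ties these to the Doeblin-type minorization. A remark used throughout is that $A_{n_0}(T)$ is again a Markov operator lying in $\Sigma_P(X)$: it is a convex combination of the operators $T^k\in\Sigma_P(X)$ and $\ck$ is convex, so every part of Theorem \ref{Dbp-prp} is available for it (in particular part (iii), since $P$ is a Markov projection).

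For (i) $\Rightarrow$ (ii) I would substitute condition $\frak{D}_m$ into part (iii) of Theorem \ref{Dbp-prp}. Fix $u,v\in\ck$ with $u-v\in N_P$ and put $a=A_{n_0}(T)u$, $b=A_{n_0}(T)v$, both of which lie in $\ck$. By \eqref{Dm} there are $z\in\ck$ and $\varphi\in X_+$ with $\|\varphi\|\leq\eta$ such that $a+\varphi-\tau z$ and $b+\varphi-\tau z$ are positive. Writing $a-b=(a+\varphi-\tau z)-(b+\varphi-\tau z)$ and using that $f$ coincides with the norm on $X_+$, each bracket has norm $f(a)+f(\varphi)-\tau f(z)=1-\tau+\|\varphi\|\leq 1-\tau+\eta$, so $\|a-b\|\leq 2(1-\tau+\eta)$. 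Part (iii) of Theorem \ref{Dbp-prp} then gives $\delta_P(A_{n_0}(T))\leq\lambda(1-\tau+\eta)$, and the constraint \eqref{Dm0}, namely $\eta<\tau+\frac{1}{\lambda}-1$, is exactly what forces the right-hand side strictly below $1$.

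For (ii) $\Rightarrow$ (iii) I would reuse the averaging estimates from the proof of Theorem \ref{Mergodp}. From $A_n(T)(I-T)=\frac{1}{n}(I-T^n)$ one obtains $\|A_n(T)(I-T^k)\|\leq\frac{2k}{n}$ and hence $\|A_n(T)(I-A_{n_0}(T))\|\leq\frac{n_0+1}{n}$. Setting $\rho:=\delta_P(A_{n_0}(T))<1$ and combining parts (ii) and (vi) of Theorem \ref{Dbp-prp} yields $(1-\rho)\,\delta_P(A_n(T))\leq\delta_P\big(A_n(T)(I-A_{n_0}(T))\big)\leq\frac{n_0+1}{n}$, so $\delta_P(A_n(T))\leq\frac{n_0+1}{(1-\rho)n}\to 0$; that is, $T$ is weakly $P$-mean ergodic.

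The step I expect to be the main obstacle is (iii) $\Rightarrow$ (i), where the minorization \eqref{Dm} has to be reconstructed from smallness of $\delta_P$. Here I would first invoke (iii) to choose $n_0$ with $\rho':=\delta_P(A_{n_0}(T))<\lambda^{-2}$ (this is where the full strength of (iii), rather than just (ii), is convenient). Given $x,y\in\ck$ with $x-y\in N_P$, set $a=A_{n_0}(T)x$ and $b=A_{n_0}(T)y$, so that $a,b\in\ck$ and $\|a-b\|\leq\rho'\|x-y\|\leq 2\rho'$. Since $X_+$ is $\lambda$-generating, decompose $a-b=p-q$ with $p,q\in X_+$ and $\|p\|+\|q\|\leq\lambda\|a-b\|$; because $f(a-b)=0$ we get $\|p\|=\|q\|=:s\leq\lambda\rho'$. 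The identity $b+p=a+q$ now suggests taking $\varphi_{xy}=p$, $z_{xy}=a$ and $\tau=1$: indeed $a+\varphi_{xy}=a+p\geq a=\tau z_{xy}$ and $b+\varphi_{xy}=a+q\geq\tau z_{xy}$. Finally $\sup_{xy}\|\varphi_{xy}\|\leq\lambda\rho'=:\eta<\frac{1}{\lambda}=\tau+\frac{1}{\lambda}-1$, so condition $\frak{D}_m$ holds. The delicate point is the matching of constants: the perturbation $\varphi_{xy}$ can only be controlled by $\lambda\rho'$, so one genuinely needs $\rho'$ below $\lambda^{-2}$ rather than merely below $1$, which is precisely what the convergence $\delta_P(A_n(T))\to 0$ from (iii) delivers.
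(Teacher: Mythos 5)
Your proof is correct. The implications (i)$\Rightarrow$(ii) and (ii)$\Rightarrow$(iii) follow essentially the same route as the paper: the first is the same additivity-of-the-norm computation giving $\|A_{n_0}(T)u-A_{n_0}(T)v\|\leq 2(1-\tau+\eta)$ followed by part (iii) of Theorem \ref{Dbp-prp} (you skip the paper's normalization of the two positive elements to points of $\ck$, which is harmless), and the second reproduces verbatim the averaging estimates from the proof of Theorem \ref{Mergodp}, exactly as the paper indicates. Where you genuinely diverge is (iii)$\Rightarrow$(i). The paper fixes an auxiliary reference point $y_0\in\ck$ with $x-y_0,\,y-y_0\in N_P$, takes $\varphi_{xy}$ to be the sum of the negative parts of $A_{n_0}(T)x-A_{n_0}(T)y_0$ and $A_{n_0}(T)y-A_{n_0}(T)y_0$, and sets $z_{xy}=A_{n_0}(T)y_0$, calibrating $n_0$ so that $\|A_{n_0}(T)x-A_{n_0}(T)y\|\leq\frac{1}{4\lambda^2}$ and hence $\eta=\frac{1}{4\lambda}$. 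You instead decompose $A_{n_0}(T)(x-y)=p-q$ directly via the $\lambda$-generating property, exploit $f(p)=f(q)$ to get $\|p\|=\|q\|\leq\lambda\,\delta_P(A_{n_0}(T))$, and read the minorization off the identity $b+p=a+q$ with $z_{xy}=A_{n_0}(T)x$ and $\varphi_{xy}=p$, requiring $\delta_P(A_{n_0}(T))<\lambda^{-2}$. Both arguments need the full strength of (iii) (the decay of $\delta_P(A_n(T))$, not merely (ii)) to push the relevant quantity below a $\lambda$-dependent threshold; yours is slightly more economical in that it dispenses with the auxiliary point $y_0$ and with the extra triangle inequality that forces the paper's factor $\frac{1}{4}$, at the cost of making $z_{xy}$ depend asymmetrically on $x$ (which the condition $\frak{D}_m$ permits). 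No gap.
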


\begin{proof} (i) $\Rightarrow$ (ii).
By condition $\frak{D}_m$, there is a $\t\in (0,1]$, $n_0\in\bn$  and for any two elements
$x,y\in\ck$ with $x-y\in N_P$, there exist
$z_{xy}\in\ck$, $\f_{xy}\in X_{+}$ with
\begin{eqnarray}\label{Dm2}
\sup\limits_{xy}\|\f_{xy}\|\leq \eta
\end{eqnarray}
such that
\begin{equation}\label{Dm3}
A_{n_0}(T)x+\f_{xy}\geq\t z_{xy}, \ \ A_{n_0}(T)y+\f_{xy}\geq\t z_{xy}.
\end{equation}

Using the Markovianity of $T$, and the inequalities
\eqref{Dm3} with \eqref{Dm2},  we obtain
\begin{eqnarray*}
\|A_{n_0}(T)x+\f_{xy}-\t z_{xy}\|&=&f(A_{n_0}(T)x+\f_{xy}-\t z_k)\\[2mm]
&=&1-(\underbrace{\t-f(\f_{xy})}_{c})\\[2mm]
&=&1-c\leq 1-(\t-\eta).
\end{eqnarray*}
By the same argument, one finds
\begin{eqnarray*}
\|A_{n_0}(T)y+\f_{xy}-\t z_{xy}\|=1-c\leq 1-(\t-\eta)
\end{eqnarray*}

\noindent Let us denote
\begin{eqnarray*}
x_1=\frac{1}{1-c}(A_{n_0}(T)x+\f_{xy}-\t z_{xy}),\\[2mm]
y_1=\frac{1}{1-c}(A_{n_0}(T)y+\f_{xy}-\t z_{xy}).
\end{eqnarray*}
It is clear that both $x_1,y_1\in \ck$.

So,
\begin{equation*}
\|A_{n_0}(T)x-A_{n_0}(T)y\|=(1-c)\|x_1-y_1\|\leq
2\bigg(1-(\t-\eta)\bigg).
\end{equation*}

Hence,
\begin{equation}\label{N26}
\frac{\l}{2}\|A_{n_0}(T)x-A_{n_0}(T)y\|\leq \l\bigg(1-(\t-\eta)\bigg).
\end{equation}

By \eqref{Dm0} and (iii) of Theorem \ref{Dbp-prp}, and using \eqref{N26}  we obtain,
$$
\d_P(A_{n_0}(T))\leq \m<1 ,
$$
where $\m=\l(1-\t+\eta)$, hence (ii) follows.

The implication (ii) $\Rightarrow$ (iii) immediately follows from the proof of the implication (ii) $\Rightarrow$ (i) of
Theorem \ref{Mergodp}. Therefore, it is enough to establish (iii) $\Rightarrow$ (i).
 Assume that $T$ is weakly $P$-mean ergodic.  Then
\begin{equation*}
\sup_{x,y\in\ck, x-y\in N_P}\|A_n(T)x-A_n(T)y\|\to 0\ \ \ \textrm{as} \ \
n\to\infty.
\end{equation*}
Therefore, one can find $n_0\in\bn$ such that
\begin{equation}\label{N27}
\|A_{n_0}(T)x-A_{n_0}(T)y\|\leq\frac{1}{4\l^2}, \ \ \textrm{for all} \ \ x,y\in\ck, x-y\in N_P.
\end{equation}

Now pick any $y_0\in\ck$ with $x-y_0\in N_p$ and $y-y_0\in N_P$.
Due to Lemma \ref{Lem2} we decompose
\begin{eqnarray}\label{Dm4}
&&A_{n_0}(T)x-A_{n_0}(T)y_0=(A_{n_0}(T)x-A_{n_0}(T)y_0)_+-(A_{n_0}(T)x-A_{n_0}(T)y_0)_-\\[2mm]
&&A_{n_0}(T)y-A_{n_0}(T)y_0=(A_{n_0}(T)y-A_{n_0}(T)y_0)_+-(A_{n_0}(T)y-A_{n_0}(T)y_0)_-.\nonumber
\end{eqnarray}
Denote
$$
\f_{x}=(A_{n_0}(T)x-A_{n_0}(T)y_0)_-, \ \ \f_{y}=(A_{n_0}(T)y-A_{n_0}(T)y_0)_-
$$
and define
$$
\f_{xy}=\f_{x}+\f_{y}.
$$
It is clear that $\f_{xy}\in X_+$ and from \eqref{N27}  with Lemma \ref{Lem2}, one gets
$$
\sup_{x,y\in\ck, x-y\in N_P}\|\f_{xy}\| \leq\frac{1}{4\l}.
$$
Moreover, by \eqref{Dm4} we obtain
\begin{eqnarray*}
A_{n_0}(T)x+\f_{xy}&\geq & A_{n_0}(T)x+\f_{x}\\[2mm]
&=&A_{n_0}(T)y_0+A_{n_0}(T)x-A_{n_0}(T)y_0+\f_{x}\\[2mm]
&=&A_{n_0}(T)y_0+(A_{n_0}(T)x-A_{n_0}(T)y_0)_+\\
 &\geq& A_{n_0}(T)y_0.
\end{eqnarray*}
Similarly, one gets
$$
A_{n_0}(T)x+\f_{xy}\geq A_{n_0}(T)y_0.
$$
Now, by denoting $\t=1$, $\eta=\frac{1}{4\l}$ and $z_{xy}=A_{n_0}(T)y_0$, we infer that the operator  $T$ satisfies the condition $\frak{D}_m$. This completes the proof.
\end{proof}

\begin{remark} We notice that if in the condition $\frak{D}_m$ one replaces $A_n(T)$ with some power of $T$, then we obtain the Deoblin's condition for $T$ which has been investigated in
\cite{DP, M13,M01,SZ}. We think that such type of result is even a new in the classical, i.e.  $X$ is taken as an $L^1$-space.
\end{remark}

In the next example by means of Theorem \ref{Dm1}, we show that weakly $P$-mean ergodic operator is not necessary to be uniformly mean ergodic.

\begin{example} Recall Example \ref{E2} (3). Namely, $X=C[0,1]$ with the cone
$$
X_+=\big\{x\in X: \ \max_{0\leq t\leq 1}|x(t)-x(1)|\leq 2
x(1)\big\}.
$$
Consider the Markov operator $T:X\to X$ given by
$(Tx)(t)=tx(t).$

Let us establish that $T$ satisfies the
condition $\frak{D}_m$. First, it is noted that
$$
(A_n(T)x)(t)=\frac{1}{n}\frac{t-t^{n+1}}{1-t}x(t).
$$
We assume that $Px=x(1)$. Now take $x,y\in\ck$. Put
$\f_{xy}\equiv 0$, $\t=1$ and $z_{xy}=c$, $c\in(0,1/2)$. Then the
inequalities $A_{n_0}x\geq \t z_{xy}$, $A_{n_0}y\geq \t z_{xy}$
are equivalent to $A_{n_0}x-\t z_{xy},A_{n_0}y-\t z_{xy}\in X_+$,
which is equivalent to
\begin{eqnarray*}
&&\max_{0\leq t\leq 1}|(A_{n_0}x)(t)-(A_{n_0}x)(1)|\leq
2\big((A_{n_0}x)(1)-z_{xy}\big), \\[2mm]
&&\max_{0\leq t\leq 1}|(A_{n_0}y)(t)-(A_{n_0}y)(1)|\leq
2\big((A_{n_0}y)(1)-z_k\big).
\end{eqnarray*}
The last one can be rewritten as follows:
\begin{eqnarray*}
&&\max_{0\leq t\leq 1}\bigg|\frac{1}{n_0}\frac{t-t^{n_0+1}}{1-t}x(t)-x(1)\bigg|\leq 2(x(1)-c),
\\[2mm]
&&
 \max_{0\leq
t\leq 1}\bigg|\frac{1}{n_0}\frac{t-t^{n_0+1}}{1-t}y(t)-y(1)\bigg|\leq 2(y(1)-c).
\end{eqnarray*}

Taking
into account $x,y\in\ck$, from the last ones, we have
\begin{eqnarray}\label{1cc}
&&\max_{0\leq t\leq 1}\bigg|\frac{1}{n_0}\frac{t-t^{n_0+1}}{1-t}x(t)-1)\bigg|\leq 2(1-c),
\\[2mm]
&&\label{2cc}
 \max_{0\leq
t\leq 1}\bigg|\frac{1}{n_0}\frac{t-t^{n_0+1}}{1-t}y(t)-1\bigg|\leq 2(1-c).
\end{eqnarray}
From the last expressions, we infer the existence of $n_0$ such that inequalities
\eqref{1cc} and \eqref{2cc} are satisfied. This,  due to Theorem \ref{Dm1},   yields that $T$ satisfies the
condition $\frak{D}_m$. Hence,  $T$ is weakly $P$-mean ergodic. However, one can see that $T$ is not uniformly means ergodic.
\end{example}

Now, we give an application of Theorem \ref{Mergodp}.

 \begin{Theorem} Let $X$ be a Banach space, $T:X\to X$ be a mean ergodic operator on $X$ with $\|T\|\leq 1$ and let $P$ be a Markov projection on $X$.
 Then the following statements are equivalent:
 \begin{enumerate}
 \item[(i)]  there exists an $n_0\in \bn$ such that $\|A_{n_0}(T)_{|_{I-P}}\|<1$;
 \item[(ii)] $T$ is uniformly mean ergodic.
 \end{enumerate}
 \end{Theorem}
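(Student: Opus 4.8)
The plan is to transport the statement into the abstract-state-space setting, where Theorem~\ref{Mergodp} is available, by means of the embedding $\mathcal{X}=\br\oplus X$ of Example~\ref{E1}(c). I would equip $\mathcal{X}$ with the norm $\|(\a,x)\|=\max\{|\a|,\|x\|\}$, the cone $\mathcal{X}_+=\{(\a,x):\|x\|\leq\a\}$, the functional $f(\a,x)=\a$ and the base $\ck=\{(1,x):\|x\|\leq 1\}$, so that $(\mathcal{X},\mathcal{X}_+,\ck,f)$ is an abstract state space into which $X$ embeds isometrically. As in Example~\ref{E2}(4), the lift $\mathcal{T}(\a,x)=(\a,Tx)$ is a Markov operator because $\|T\|\leq 1$, and $\mathcal{P}(\a,x)=(\a,Px)$ is a Markov projection, $P$ being contractive as the limiting projection of the contraction $T$ (see the next paragraph).

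First I would note that the Ces\`aro averages lift in the obvious way, $A_n(\mathcal{T})(\a,x)=(\a,A_n(T)x)$, so that the pointwise convergence $A_n(T)x\to Q_Tx$ furnished by the mean ergodicity of $T$ gives $A_n(\mathcal{T})(\a,x)\to(\a,Q_Tx)$. Hence $\mathcal{T}$ is mean ergodic on $\mathcal{X}$ with limiting projection $\mathcal{Q}(\a,x)=(\a,Q_Tx)$, the lift of the ergodic projection $Q_T$ of $T$ (note $\|Q_T\|\leq 1$ since $\|A_n(T)\|\leq 1$); the Markov projection $P$ of the statement is thereby identified with $Q_T$. The second, routine, step is the norm computation already carried out in Theorem~\ref{TTU}, now for $A_{n_0}(\mathcal{T})$ in place of $\mathcal{T}$: since $N_{\mathcal{P}}=\{(0,x):x\in(I-P)X\}$ and $A_{n_0}(\mathcal{T})(0,x)=(0,A_{n_0}(T)x)$, one obtains $\d_{\mathcal{P}}(A_{n_0}(\mathcal{T}))=\|A_{n_0}(T)_{|_{I-P}}\|$.

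With these identifications the equivalence follows from Theorem~\ref{Mergodp} applied to $\mathcal{T}$: condition (i) becomes $\d_{Q_T}(A_{n_0}(\mathcal{T}))<1$, which is equivalent to the uniform mean ergodicity $\|A_n(\mathcal{T})-\mathcal{Q}\|\to 0$ of $\mathcal{T}$. To conclude I would transport this back through the isometric embedding: because the constraint $\|(\a,x)\|\leq 1$ leaves $\|x\|\leq 1$ unrestricted, one checks $\|A_n(\mathcal{T})-\mathcal{Q}\|=\|A_n(T)-Q_T\|$, so the uniform mean ergodicity of $\mathcal{T}$ coincides with that of $T$; the converse implication transfers verbatim, and Theorem~\ref{Mergodp} even supplies the rate $\|A_n(T)-Q_T\|\leq\frac{2(n_0+1)}{1-\d_{Q_T}(A_{n_0}(T))}\cdot\frac1n$.

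The delicate point, and the one I would check most carefully, is precisely this bookkeeping on the projections. One must verify that the limiting projection of the lift $\mathcal{T}$ is genuinely the lift $\mathcal{Q}$ of $Q_T$, and that the hypothesis's Markov projection $P$ must coincide with $Q_T$, so that Theorem~\ref{Mergodp}—phrased only for the canonical limiting projection—can legitimately be invoked. Indeed the equivalence fails for an arbitrary contractive idempotent $P$ (taking $P$ transverse to the fixed space of $T$ breaks the restriction estimate while leaving (ii) intact), so the identification $P=Q_T$ is essential; everything else is a mechanical translation of the finite computations of Theorem~\ref{TTU} together with the rate estimate already contained in Theorem~\ref{Mergodp}.
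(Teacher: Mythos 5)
Your proposal is correct and follows essentially the same route as the paper: lift $T$ and $P$ to the abstract state space $\mathcal{X}=\br\oplus X$ of Example \ref{E1}(c), observe $A_n(\mathcal{T})(\a,x)=(\a,A_n(T)x)$, identify $\d_{\mathcal{P}}(A_{n_0}(\mathcal{T}))=\|A_{n_0}(T)_{|_{I-P}}\|$ via the computation in Theorem \ref{TTU}, and invoke Theorem \ref{Mergodp}. Your extra remark that $P$ must be identified with the limiting projection $Q_T$ (without which the equivalence fails, e.g.\ for $T=I$ and a nontrivial $P$) is a worthwhile precision that the paper's statement and proof leave implicit.
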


 \begin{proof}  (i)$\Rightarrow $(ii). Now consider the abstract state space $(\mathcal{X}, \mathcal{X}_+,\ck,f)$ and the linear operator
 $\mathcal{T}(\a , x)=(\a , T(x))$. Due to Theorem \ref{TTU}, the operator $\mathcal{T}$ is Markov. Moreover, for every $(\a , x)\in \mathcal{X}$, one has
 \begin{eqnarray*}
 A_n(\mathcal{T})(\a , x) &=& \frac{1}{n}\sum_{k=1}^n \mathcal{T}^k(\a , x)\\
 &=& \frac{1}{n}\sum_{k=1}^n (\a , T^k(x))\\
  &=& (\a , A_n(T)(x)).
 \end{eqnarray*}
 Hence, the mean ergodicity of $T$ implies the convergence of $\{A_n(\mathcal{T})(\a, x)\}$, which shows that $\mathcal{T}$ is mean ergodic with its limiting projection $\mathcal{P}$.
By the proof of the implication (ii)$\Rightarrow$(i) in Theorem \ref{TTU}, we have
 \[\d_{\mathcal{P}}(A_n(\mathcal{T}))= \| A_n(T)_{|_{I-P}}\|,\]
 hence, from the hypothesis of the theorem,  for some $n_0\in \bn$, one has
 \[\d_{\mathcal{P}}(A_{n_0}(\mathcal{T}))<1.\]
 So, Theorem \ref{Mergodp} yields that $\mathcal{T}$ is uniformly mean ergodic, which implies the uniform mean ergodicity of $T$.

 The implication (ii)$\Rightarrow $(i) can be proved in the reverse order.

 \end{proof}

 \begin{remark}
We notice that in \cite{LSS} relations between the uniform mean ergodicity  and uniform convergence of the Abel averages have been studied.
\end{remark}

\section{Perturbation Bounds and Uniform $P$-Ergodicity of Markov Operators}

This section is devoted to perturbation bounds for uniformly $P$-ergodic Markov operators. The case when $P$ is a one-dimensional projection, this type of questions have been studied in
\cite{EM2018,Mit,SW}. For general projections, these kinds of bounds have not been investigated. Therefore, results of this section are new even in the classical case as well.

 Recall that if $T$ is uniformly $P$-ergodic, then by Corollary \ref{unif-p-charac}
there are constants $C, \a \in \br_+, n_0 \in\bn$
such that
$$
\left\| T^n - P \right\| \leq C e^{-\a n}, \,\,\,\,\,
\forall n\geq n_0.$$

In this section, we prove perturbation bounds in terms of $C$ and
$e^{\a }$. Moreover, we also give several bounds in terms of
the Dobrushin's ergodicity coefficient.

\begin{Theorem} \label{per1}
Let $(X, X_+, \ck, f)$ be an abstract state space (i.e. $\l$-generating), $P$ be a projection on $X$ and let
$T,S\in \Sigma_P^{inv}(X)$. If $T\in\Sigma_P^u(X)$, then
\begin{eqnarray} \label{1}
\displaystyle && \left\| T^n x- S^n z \right\| \leq
\begin{cases}
\left\| x - z \right\| + n \left\| T-S\right\|, &\forall n \leq \tilde{n},\\
\l C e^{-\a n} \left\| x- z \right\| + \big( \tilde{n}+ \l C \frac{e^{-\a
\tilde{n}} - e^{-\a n}}{1 - e^{-\a}}\big) \left\| T-S\right\|, &
\forall n \geq \tilde{n}+1
\end{cases} \nonumber
\end{eqnarray}
where $\displaystyle \tilde{n}:= 
\bigg[\frac{\log(1/C)}{\log e^{-\a}}\bigg]$, $C, \a \in
\br_+$, $x,z\in \ck$ and $x-z\in N_P$.
\end{Theorem}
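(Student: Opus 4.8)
The plan is to split the difference into a transient part and a telescoped part, and to estimate each using the generalized Dobrushin coefficient. Writing
\[
T^n x - S^n z = T^n(x-z) + (T^n - S^n)z,
\]
I would treat the first summand using the hypothesis $x-z\in N_P$ and the second via the telescoping identity
\[
T^n - S^n = \sum_{k=0}^{n-1} T^{n-1-k}(T-S)S^k .
\]
The key structural observation is that every vector to which a power of $T$ is applied lies in $N_P$. Indeed, since $T,S\in\Sigma_P^{inv}(X)$ we have $PT=PS=P$, hence $P(T-S)S^k z = (P-P)S^k z = 0$, so $(T-S)S^k z\in N_P$; and $x-z\in N_P$ by hypothesis. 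This is precisely the setting in which $\d_P$ controls the norm, through $\|T^m w\|\leq \d_P(T^m)\|w\|$ for $w\in N_P$.

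Next I would reduce everything to estimating $\d_P(T^j)$. Since $z\in\ck$ and $S$ is Markov, $S^k z\in\ck$, so $\|S^k z\|=1$ and $\|(T-S)S^k z\|\leq\|T-S\|$. Applying the $\d_P$-bound termwise and reindexing $j=n-1-k$ gives
\[
\|(T^n-S^n)z\|\leq \Big(\sum_{j=0}^{n-1}\d_P(T^j)\Big)\|T-S\|,
\qquad
\|T^n(x-z)\|\leq \d_P(T^n)\|x-z\|.
\]
Two bounds on $\d_P(T^m)$ are then available: the trivial one $\d_P(T^m)\leq 1$ from part (i) of Theorem \ref{Dbp-prp}, and an exponential one. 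For the latter, note that $P$ is Markov by Proposition \ref{PTisTPisP}, so part (iii) of Theorem \ref{Dbp-prp} applies; combined with $\|T^m u-T^m v\|=\|(T^m-P)(u-v)\|\leq 2\|T^m-P\|\leq 2Ce^{-\a m}$ for $u,v\in\ck$ with $u-v\in N_P$ (here $\|u-v\|\leq\|u\|+\|v\|=2$ and one invokes the exponential estimate of Corollary \ref{unif-p-charac}), it yields
\[
\d_P(T^m)\leq \frac{\l}{2}\cdot 2Ce^{-\a m}=\l C e^{-\a m}.
\]

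Finally I would assemble the two cases, using that $\tilde n$ is exactly the crossover index where $Ce^{-\a m}$ falls below $1$. For $n\leq\tilde n$ the trivial bound gives $\|T^n(x-z)\|\leq\|x-z\|$ and $\sum_{j=0}^{n-1}\d_P(T^j)\leq n$, producing the first line. For $n\geq\tilde n+1$ I would use $\d_P(T^n)\leq \l Ce^{-\a n}$ for the transient part, and split the sum as $\sum_{j=0}^{\tilde n-1}\d_P(T^j)+\sum_{j=\tilde n}^{n-1}\d_P(T^j)\leq \tilde n+\l C\sum_{j=\tilde n}^{n-1}e^{-\a j}$, the geometric sum evaluating to $\l C\frac{e^{-\a\tilde n}-e^{-\a n}}{1-e^{-\a}}$; adding the two contributions yields the second line. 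The main obstacle is not analytic but a matter of bookkeeping: one must match the crossover index $\tilde n$ to the split of the geometric sum, and ensure the exponential estimate for $\d_P(T^m)$ is available from $\tilde n$ onward. This is consistent with Corollary \ref{unif-p-charac} since near $m=\tilde n$ one has $\l Ce^{-\a m}\geq 1$, so the exponential bound there holds trivially, and one may arrange $n_0\leq\tilde n$; getting the constants to come out exactly as stated is then routine.
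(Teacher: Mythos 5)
Your proposal is correct and follows essentially the same route as the paper: the same telescoping identity $T^n-S^n=\sum T^{n-1-k}(T-S)S^k$, the same use of parts (iii) and (v) of Theorem \ref{Dbp-prp} to reduce everything to $\d_P(T^j)$, the same bound $\d_P(T^m)\le \l Ce^{-\a m}$, and the same case split at $\tilde n$. Your extra remark about reconciling $n_0$ with $\tilde n$ is a small point of care the paper glosses over, but it does not change the argument.
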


\begin{proof}
For every $n\in \bn$,  by induction, we have
\begin{equation} \label{2}
S^n = T^n + \sum_{i=0}^{n-1} T^{n-i-1} \circ (S-T) \circ S^i.
\end{equation}
Let $x,z\in \ck$ and $\ x-z\in N_P$. Then it follows from \eqref{2} that
\begin{eqnarray*}
T^n x - S^n z &=& T^n x - T^n z - \sum_{i=0}^{n-1} T^{n-i-1} \circ (S-T) \circ S^iz \\
&=& T^n  (x - z) - \sum_{i=0}^{n-1} T^{n-i-1} \circ (S-T) (z_i),
\end{eqnarray*}
where $z_i=S^iz$. Hence,
\begin{equation*}
\left\| T^n x  - S^n z \right\| \leq \left\| T^n  (x - z) \right\|
+ \sum_{i=0}^{n-1} \left\|  T^{n-i-1} \circ (S-T) (z_i) \right\|.
\end{equation*}
As $T,S\in \Sigma_p^{inv}(X)$,  we have $P(S-T)=0$ and due to (v) of Theorem (\ref{Dbp-prp}), one finds
\begin{equation*}
 \left\|  T^{n-i-1}  (S-T) (z_i) \right\| \leq \d_P(T^{n-i-1}) \left\| S-T\right\|,
\end{equation*}
and
$$
\left\| T^n (x - z) \right\| \leq \d_P(T^n) \left\| x -z\right\|.
$$
Hence,
\begin{eqnarray} \label{3}
\left\| T^n x  - S^n z \right\| &\leq & \d_P(T^n) \left\| x - z\right\| + \sum_{i=0}^{n-1} \d_P(T^{n-i-1}) \left\| S-T \right\|  \nonumber\\
&=& \d_P(T^n)  \left\|x - z\right\| + \left\| S-T \right\|
\sum_{i=0}^{n-1} \d_P (T^{i}).
\end{eqnarray}
By
$$
\left\| T^i u- T^i v \right\|\leq \left\| T^i u- Pu\right\|+
 \left\|Pv- T^i v \right \|,
$$
with the fact $Pu=Pv$, and due to (iii) of Theorem (\ref{Dbp-prp}), one gets
\begin{eqnarray*}
\displaystyle \d_P(T^i ) \leq  \frac{\l}{2} \sup_{u,v\in\ck,u-v\in N_P} \left\| T^i
u- T^i v \right\| \leq \l \sup_{u\in \ck} \left\| T^i u- P
u\right\|.
\end{eqnarray*}
Therefore,
\begin{eqnarray}\label{33}
\displaystyle \d_P(T^n) \leq
 \begin{cases}
1, &\forall n \leq \tilde{n},\\
\l C e^{-\a n} , &  \forall n \geq \tilde{n}+1
\end{cases}
\end{eqnarray}
where $\tilde{n} = \bigg[\frac{\log (1/C)}{\log e^{-\a}}\bigg] $.

From \eqref{33}  we obtain
\begin{eqnarray} \label{4}
\displaystyle
\sum_{i=0}^{n-1} \d_P(T^i ) &=& \sum_{i=0}^{\tilde{n}-1} \d_P(T^i) + \sum_{i=\tilde{n}}^{n-1} \d_P(T^i) \nonumber\\
&\leq& \tilde{n} + \sum_{i=\tilde{n}}^{n-1} \l C e^{-\a i} \nonumber \\
&=& \tilde{n} + \l C e^{-\a \tilde{n}} \frac{1-e^{-\a (n-
\tilde{n})}}{1- e^{-\a}}, \, \, \forall n \geq \tilde{n}+1 .
\end{eqnarray}
Hence, we get
the required assertion.
\end{proof}

\begin{Corollary}
Assume that the same hypotheses of Theorem \ref{per1} are satisfied. Then, for all $x \in \ck$
\begin{eqnarray}
\displaystyle && \left\| T^n x- S^n x \right\| \leq
\begin{cases}
 n \left\| T-S\right\|, &\forall n \leq \tilde{n},\\
 \big( \tilde{n}+ \l C \frac{e^{-\a
\tilde{n}} - e^{-\a n}}{1 - e^{-\a}}\big) \left\| T-S\right\|, &
\forall n\geq \tilde{n}+1
\end{cases} \nonumber
\end{eqnarray}
here as before,  $\displaystyle \tilde{n}:=
\bigg[\frac{\log(1/C)}{ \log e^{-\a}}\bigg]$, $C,\a \in
\br_+$.
\end{Corollary}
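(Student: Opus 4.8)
The plan is to obtain this statement as the diagonal special case $z = x$ of Theorem \ref{per1}, so that essentially no new work is required beyond checking that this choice is admissible and then simplifying the resulting bound.

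First I would verify that substituting $z = x$ respects the hypotheses of Theorem \ref{per1}. That theorem requires $x, z \in \ck$ with $x - z \in N_P$. Taking $z = x$, the membership $x \in \ck$ is exactly the hypothesis of the corollary, and $x - z = 0$ satisfies $P(x-z) = P(0) = 0$, so $x - z = 0 \in N_P$ automatically. Hence the pair $(x, x)$ is an admissible input to Theorem \ref{per1}, and all its conclusions apply verbatim with $z$ replaced by $x$.

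Next I would read off the bound of Theorem \ref{per1} at $z = x$ and simplify. The crucial observation is that $\|x - z\| = \|x - x\| = 0$, so the summand carrying this factor drops out of both branches. In the regime $n \leq \tilde{n}$ the bound $\|x-z\| + n\|T-S\|$ collapses to $n\|T-S\|$, and in the regime $n \geq \tilde{n}+1$ the coefficient $\l C e^{-\a n}\|x-z\|$ vanishes, leaving only $\big(\tilde{n} + \l C \frac{e^{-\a \tilde{n}} - e^{-\a n}}{1 - e^{-\a}}\big)\|T-S\|$. This is precisely the asserted two-case estimate, with $\tilde{n} = \big[\frac{\log(1/C)}{\log e^{-\a}}\big]$, $C$, and $\a$ unchanged.

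There is no genuine obstacle here: the result is an immediate corollary, and the only point to confirm is the admissibility of the degenerate choice $z = x$, which holds because $0 \in N_P$ for any projection $P$. One might optionally remark that the corollary isolates the pure perturbation dependence, namely the way $\|T^n x - S^n x\|$ is controlled by $\|T-S\|$ alone once the initial-state discrepancy $\|x-z\|$ is removed.
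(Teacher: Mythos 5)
Your proof is correct and matches the paper's intent: the corollary is stated without proof precisely because it is the specialization $z=x$ of Theorem \ref{per1}, with $x-x=0\in N_P$ making the pair admissible and $\|x-z\|=0$ killing the corresponding terms in both branches. Nothing further is needed.
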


The following theorem gives an alternative method of obtaining
perturbation bounds in terms of $\d_p(T^m)$.
\begin{Theorem} \label{per4}
Let $(X, X_+, \ck, f)$ be an abstract state space, $P$ be a projection on $X$ and let $S,T\in\Sigma^{inv}_P(X)$.
If $\d_P (T^m) < 1$ holds for some $m\in\bn$, then for every
$x,z\in\ck$ with $x-z\in N_P$ one has

\begin{eqnarray} \label{26}
\left\| T^n x - S^n z \right\| &\leq & \d_P(T^m)^{\lfloor n/m
\rfloor} \big(\left\| x - z\right\| + \max_{0< i< m} \left\| T^i -
S^i \right\|\big) \\[2mm]
&&+ \frac{1 - \d_P(T^m)^{\lfloor n/m\rfloor}}{1 - \d_P(T^m)} \left\|
T^m - S^m\right\|, \,\,\,\, n \in \bn \nonumber.
\end{eqnarray}
\end{Theorem}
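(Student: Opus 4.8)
The plan is to decompose $n$ into blocks of length $m$ and set up a one-block recursion. Write $n = qm + r$ with $q = \lfloor n/m\rfloor$ and $0 \le r < m$, fix $x,z \in \ck$ with $x - z \in N_P$, and for this fixed pair abbreviate $e_k = \|T^k x - S^k z\|$ and $\rho = \d_P(T^m)$. The engine of the proof is the elementary operator identity
$$T^k x - S^k z = T^m\big(T^{k-m}x - S^{k-m}z\big) + (T^m - S^m)S^{k-m}z,$$
valid for $k \ge m$, which isolates a contracting term and a perturbation term.

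First I would record the consequences of $S,T \in \Sigma_P^{inv}(X)$: since $TP = P$ and $PT = TP$ one has $PT = TP = P$, hence $PT^j = P$ and likewise $PS^j = P$ for every $j \ge 0$. Therefore, for any $j$, $P(T^{j}x - S^{j}z) = Px - Pz = P(x-z) = 0$, so the vector $w := T^{k-m}x - S^{k-m}z$ lies in $N_P$. This is the crucial point, and it is exactly where the hypothesis $S,T\in\Sigma_P^{inv}(X)$ enters. Consequently the definition of $\d_P(T^m)$ gives $\|T^m w\| \le \rho\|w\|$, while for the second term $S^{k-m}z \in \ck$ (as $S$ is Markov and $z\in\ck$) with $\|S^{k-m}z\| = 1$, so $\|(T^m-S^m)S^{k-m}z\| \le \|T^m - S^m\|$. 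Combining these, I obtain the recursion $e_k \le \rho\, e_{k-m} + \|T^m - S^m\|$ for all $k \ge m$.

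Next I would iterate this recursion $q$ times, from $k = n$ down to $k = r$, producing the geometric sum
$$e_n \le \rho^{q} e_r + \|T^m - S^m\|\sum_{i=0}^{q-1}\rho^{i} = \rho^{q} e_r + \frac{1 - \rho^{q}}{1-\rho}\|T^m - S^m\|,$$
where the closed form uses $\rho = \d_P(T^m) < 1$. It then remains to bound the base term $e_r$ for $0 \le r < m$. Splitting $T^r x - S^r z = T^r(x-z) + (T^r - S^r)z$ and using $\|T^r\| = 1$ together with $x - z \in N_P$ for the first summand and $\|z\| = 1$ for the second, I get $e_r \le \|x - z\| + \|T^r - S^r\| \le \|x-z\| + \max_{0<i<m}\|T^i - S^i\|$, the second summand vanishing when $r = 0$. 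Substituting into the iterated estimate yields precisely the claimed bound, with $\lfloor n/m\rfloor = q$.

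The computation is routine once the recursion is in place; the only genuine obstacle is verifying the membership $T^{j}x - S^{j}z \in N_P$ at every stage of the iteration, which hinges on $PT = PS = P$ and the commutation $PT = TP$, i.e. on both defining properties of $\Sigma_P^{inv}(X)$. A minor point to handle carefully is the base case $r = 0$ (in particular when $m=1$), where no perturbation term $\|T^r - S^r\|$ should appear and the bound must reduce to $e_0 = \|x - z\|$.
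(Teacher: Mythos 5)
Your proposal is correct and follows essentially the same route as the paper: the same one-block identity $T^k x - S^k z = T^m(T^{k-m}x - S^{k-m}z) + (T^m - S^m)S^{k-m}z$, the same use of $\d_P(T^m)$ on the contracting term (justified by $T^{k-m}x - S^{k-m}z \in N_P$, which the paper leaves implicit in the phrase ``keeping in mind $S,T\in\Sigma^{inv}_P(X)$'' and you spell out), and the same iteration to a geometric sum with the base case handled by the trivial splitting (the paper writes $S^n(x-z)+(T^n-S^n)x$ where you write $T^r(x-z)+(T^r-S^r)z$, an immaterial difference).
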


\begin{proof} For any $n\leq m$, due to $T^n x - S^n z = S^n (x - z) + (T^n - S^n)
x$, we get
\begin{eqnarray} \label{11}
\left\| T^n x- S^n z \right\| &\leq& \left\| x -z\right\| +
 \left\| T^n - S^n \right\|\nonumber\\[2mm]
 &\leq& \left\| x -z\right\| +\max\limits_{0<i<m}
\left\| T^i -S^i\right\|.
\end{eqnarray}
If $n < m$, then Equation (\ref{26}) reduces to \eqref{11}.
If $n\geq m $, we obtain
\begin{eqnarray*}
T^n x - S^n z &=& T^m (T^{n-m} x) - S^m (S^{n-m} z) \\
&=& T^m (T^{n-m} x - S^{n-m} z) + (T^m - S^m) S^{n-m} z.
\end{eqnarray*}
\noindent Therefore, keeping in mind $S,T\in\Sigma^{inv}_P(X)$ one finds
\begin{eqnarray*}
\left\| T^n x - S^n z \right\| \leq \d_P(T^m)\left\| T^{n-m} x -
S^{n-m} z\right\|  + \left\| T^m - S^m \right\|.
\end{eqnarray*}
Applying this relation to
$$\left\| T^{n-m} x - S^{n-m} z\right\|, \cdots,  \left\| T^{n- m(\lfloor n/m\rfloor - 1)} x - S^{n- m(\lfloor n/m\rfloor - 1)} z \right\|
$$
 and using \eqref{11} to bound $\left\| T^{n- m \lfloor n/m\rfloor}
x- S^{n- m \lfloor n/m\rfloor} z \right\|$, we obtain
\begin{eqnarray*} \label{13}
 \left\| T^n x - S^n z \right\| &\leq& \d_P(T^m)^{\lfloor n/m \rfloor} (\left\| x - z\right\| +  \max_{0< i< m} \left\| T^i - S^i \right\|)
 \nonumber\\[2mm]
 && + \bigg(\d_P(T^m)^{\lfloor n/m \rfloor - 1} + \d_P
(T^m)^{\lfloor n/m \rfloor - 2} + \cdots + 1\bigg) \left\| T^m - S^m \right\| \nonumber, \\
&= & \d_P(T^m)^{\lfloor n/m \rfloor} (\left\| x - z\right\| +
\max_{0< i< m} \left\| T^i - S^i \right\|)\\[2mm]
&& + \frac{ 1- \d_P(T^m)^{\lfloor n/m \rfloor}}{1-\d_P(T^m)} \left\|
T^m - S^m \right\|.
\end{eqnarray*}
The proof is completed.
\end{proof}

Consequently, we get the following corollary which allows to estimate the dynamics of $S$ to its fixed points set.

\begin{Corollary} Assume that the same hypotheses of Theorem \ref{per4} are satisfied. Then, for every $x \in \ck$
\begin{eqnarray*} \label{12}
\left\| S^n x-Px \right\| &\leq & \d_P(T^m)^{\lfloor n/m
\rfloor} \big(\left\| x - Px\right\| + \max_{0< i< m} \left\| T^i -
S^i \right\|\big) \\[2mm]
&&+ \frac{1 - \d_P(T^m)^{\lfloor n/m\rfloor}}{1 - \d_P(T^m)} \left\|
T^m - S^m\right\|, \,\,\,\, n \in \bn \nonumber.
\end{eqnarray*}
\end{Corollary}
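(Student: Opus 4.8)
The plan is to obtain the bound as a direct specialization of Theorem \ref{per4}, feeding in a carefully chosen pair of base points and then simplifying the left-hand side. The first thing I would record is that, under the standing hypotheses, the projection $P$ is automatically Markov. Indeed, $T\in\Sigma_P(X)$ satisfies $TP=P$ and $\d_P(T^{m})<1$ for some $m\in\bn$, so Theorem \ref{Thr-2} shows that $T$ is uniformly $P$-ergodic; Proposition \ref{PTisTPisP} then gives $P\in\Sigma(X)$, whence $P\ck\subseteq\ck$. In particular, for any $x\in\ck$ the element $Px$ again lies in $\ck$, and it is a fixed point of $S$ since $S(Px)=(SP)x=Px$.

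Next I would apply Theorem \ref{per4} with $Px$ playing the role of the first base point and $x$ playing the role of the second. The admissibility is easy to check: both $Px$ and $x$ lie in $\ck$ by the previous step, and $Px-x\in N_P$ since $P(Px-x)=P^{2}x-Px=0$. Theorem \ref{per4} thus yields
\begin{eqnarray*}
\left\| T^n (Px) - S^n x \right\| &\leq & \d_P(T^m)^{\lfloor n/m \rfloor} \big(\left\| Px - x\right\| + \max_{0< i< m} \left\| T^i - S^i \right\|\big) \\
&&+ \frac{1 - \d_P(T^m)^{\lfloor n/m\rfloor}}{1 - \d_P(T^m)} \left\| T^m - S^m\right\|.
\end{eqnarray*}

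Finally I would simplify the left-hand side. Since $T\in\Sigma_P^{inv}(X)$ we have $TP=P$, and iterating gives $T^{n}P=P$ for every $n$; hence $T^{n}(Px)=(T^{n}P)x=Px$, so the left-hand side collapses to $\left\|Px-S^{n}x\right\|=\left\|S^{n}x-Px\right\|$. Using $\left\|Px-x\right\|=\left\|x-Px\right\|$ on the right, the displayed inequality becomes exactly the asserted bound, valid for all $x\in\ck$ and all $n\in\bn$. The only point that is not purely formal is the passage that makes $P$ Markov, so that $Px$ is a legitimate argument for Theorem \ref{per4}; this is precisely where Theorem \ref{Thr-2} and Proposition \ref{PTisTPisP} are invoked, and I regard it as the one step requiring care rather than a routine substitution.
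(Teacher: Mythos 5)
Your proposal is correct and is essentially the specialization the paper intends: apply Theorem \ref{per4} to the admissible pair $(Px,x)$, use $T^nP=P$ to collapse $T^n(Px)$ to $Px$, and note $Px-x\in N_P$. Your extra care in justifying that $P$ is Markov (via Theorem \ref{Thr-2} and Proposition \ref{PTisTPisP}), so that $Px\in\ck$ is a legitimate argument, is a point the paper leaves implicit and is handled correctly.
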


\section*{Acknowledgments}
The authors thanks Dr. Ho Hon Leung for his help in checking the text of this paper. The authors would like to thank an anonymous referee whose useful suggestions
allowed us to improve the content of the paper.

\bibliographystyle{amsplain}

\end{document}